\documentclass[a4paper,12pt,reqno]{amsart}  

\textheight 220mm
\textwidth 158mm
\hoffset -18mm
\oddsidemargin2cm
 \evensidemargin2cm 
 \topmargin0cm

\usepackage{amsmath}
\usepackage[all]{xy}
\usepackage{tikz-cd}
\usepackage{amssymb}
\usepackage{amsthm}
\usepackage{amsfonts}
\usepackage{mathtools}
\usepackage{comment}
\usepackage{mathrsfs}
\usepackage{appendix}
\usepackage{cite}
\usepackage{enumerate}
\usepackage{here}
\usepackage{autobreak}
\usepackage{wrapfig}
\usepackage{graphicx}
\usepackage{lscape}
\usepackage[colorlinks]{hyperref}  
\usepackage{xcolor}
\hypersetup{
	bookmarksnumbered=true,
    colorlinks=true,
    citecolor=blue,
    linkcolor=purple,
    urlcolor=orange,
}

\usepackage[capitalize, nameinlink]{cleveref}

\sloppy
\numberwithin{equation}{section}



\theoremstyle{plain}

\newtheorem{thm}{Theorem}[section]

\newtheorem{lem}[thm]{Lemma}
\crefname{lem}{Lemma}{Lemmas}
\newtheorem{prop}[thm]{Proposition}
\crefname{prop}{Proposition}{Propositions}
\newtheorem{cor}[thm]{Corollary}
\newtheorem{claim}{Claim}

\newtheorem{introthm}{Theorem}[section]
\crefname{introthm}{Theorem}{Theorems}

\theoremstyle{definition}

\newtheorem{setup}[thm]{Setup}
\crefname{setup}{Setup}{Setups}
\newtheorem{eg}[thm]{Example}
\newtheorem*{Ack}{Acknowledgements}

\newtheorem*{NoCon}{Notation and Conventions}
\newtheorem*{Out}{Outline of this paper}

\theoremstyle{remark}
\newtheorem{rem}[thm]{Remark}


\DeclareMathOperator{\Ext}{Ext}

\DeclareMathOperator{\Nef}{Nef}

\DeclareMathOperator{\NEbar}{\overline{NE}}
\DeclareMathOperator{\Spec}{Spec}
\DeclareMathOperator{\Bl}{Bl}

\DeclareMathOperator{\Exc}{Exc}

\newcommand\Hom{\mathop{\mathrm{Hom}}\nolimits}

\newcommand{\PP}[1]{\mathbb{P}^{#1}}




\newcommand\pf{\mathfrak p}

\DeclareMathOperator\Ass{Ass}
\DeclareMathOperator\Proj{Proj}

\newcommand\dual{\raise0.9ex\hbox{$\scriptscriptstyle\vee$}}

\newcommand\linsys[1]{\lvert#1\rvert}

\newcommand\con[2]{\CC_{{#1}/{#2}}}
\DeclareMathOperator{\codim}{codim}
\DeclareMathOperator{\mult}{mult}
\DeclareMathOperator{\Supp}{Supp}
\DeclareMathOperator{\coeff}{coeff}

\newcommand\veq{\rotatebox{-90}{$=$}}
\newcommand\vsupset{\rotatebox{-90}{$\supseteq$}}
\newcommand\vdotsb{\rotatebox{-90}{$\dotsb$}}

\usepackage{mleftright}
\usepackage{multirow}
\usepackage{braket}
\usepackage{enumitem}

\crefname{enumi}{}{}
\creflabelformat{enumi}{#2(#1)#3}

\crefname{numii}{}{}
\creflabelformat{enumii}{#2(#1)#3}

\crefformat{equation}{(#2#1#3)}
\crefmultiformat{equation}
  {(#2#1#3)}
  { and (#2#1#3)}
  {, (#2#1#3)}
  { and (#2#1#3)}

\makeatletter
\renewcommand\p@enumiii{\theenumi\theenumii.}
\makeatother


\newcommand{\CB}{\mathbb{C}}

\newcommand{\QB}{\mathbb{Q}}
\newcommand{\RB}{\mathbb{R}}
\newcommand{\PB}{\mathbb{P}}

\newcommand{\CC}{\mathcal{C}}

\newcommand{\IC}{\mathcal{I}}

\newcommand{\OC}{\mathcal{O}}


\author{Yuto Masamura}
\address[Y.Masamura]{Graduate School of Mathematical Sciences, the University of Tokyo, 3-8-1 Komaba, Meguro-ku, Tokyo 153-8914, Japan}
\email{\href{mailto:masamura@ms.u-tokyo.ac.jp}{masamura@ms.u-tokyo.ac.jp}}

\author{Tomoki Yoshida}
\address[T.Yoshida]{Department~of~Mathematics, School~of~Science~and~Engineering, Waseda~University, Ohkubo~3-4-1, Shinjuku, Tokyo~169-8555, Japan}
\email{\href{mailto:tomoki_y@asagi.waseda.jp}{tomoki\_y@asagi.waseda.jp}}

\title{A construction of smooth varieties admitting small contractions}
\date{July 8, 2026}

\keywords{blowup, small contraction, Fano variety, weak Fano variety, nef cone}
\subjclass[2020]{Primary 14E30; Secondary 14J45, 14J40.}

\begin{document}
\begin{abstract}
    Given a smooth variety together with two smooth subvarieties, we construct, via two successive blowups, smooth varieties admitting small contractions.
    This generalizes Kawamata's example of small contraction in dimension $4$.
    We also construct the flip of the contraction
    explicitly.
    As an application, from products of two del Pezzo surfaces we obtain smooth weak Fano fourfolds that admit small contractions with Picard number up to $10$.
\end{abstract}
\maketitle
\setcounter{tocdepth}{2} 
\tableofcontents
\setcounter{section}{-1}
\section{Introduction}\label{Section: introduction}

We work over $\CB$, the field of complex numbers.

\subsection{General construction of smooth varieties admitting small contractions}
\label{subsection: intro construction}

Small contractions are one of the fundamental types of birational contractions that arise as steps in the minimal model program (MMP), together with flips and divisorial contractions.
A \emph{small contraction} is a projective birational morphism whose exceptional locus has codimension at least two.
While the contraction theorem guarantees their existence under suitable hypotheses, constructing explicit examples remains nontrivial.
Moreover, since $K$-extremal small contractions of smooth varieties exist only in dimension four or higher, constructing smooth examples is of particular interest.

In dimension four, Kawamata \cite{kawamata_1989_small_contractions_of_fourdimensional_algebraic_manifolds} constructed examples of $K$-extremal small contractions on smooth projective fourfolds using successive blowups.
Beginning with a smooth projective fourfold, he first blew up a curve and then a surface (see also Debarre \cite[6.19]{debarre_2001_higher-dimensional_algebraic_geometry}).
Tsukioka \cite{tsukioka_2022_on_weak_fano_manifolds_with_small_contractions_obtained_by_blowups_of_a_product_of_projective_spaces} generalized this construction to higher dimensions: starting from a smooth projective variety of dimension $\ge4$, he blew up a curve and then a codimension-two subvariety.
He applied this to $\PP n\times\PP1$ and studied when the resulting varieties are Fano or weak Fano.
Tsukioka \cite{tsukioka_2023_some_examples_of_log_fano_structures_on_blowups_along_subvarieties_in_products_of_two_projective_spaces} also applied an analogous construction to $\PP{n_1}\times\PP{n_2}$ and proved the existence of $K$-extremal small contractions.
In that work, the centers of the blowups were subvarieties of codimension $n_1+1$ and $n_2$, each contained in a fiber of the projections.

Our first main result generalizes these constructions.
Let $X''$ be a smooth quasi-projective variety containing smooth subvarieties $A''$ and $B''$.
We blow up $X''$ along $A''$ and then the strict transform of $B''$.
Our construction offers two key improvements.
First, it allows for flexible codimensions of the centers.
In contrast, Tsukioka's results require
\[
    \codim A''=\dim X''-1,\quad\codim B''=2,
\]
or $\codim A''+\codim B''=\dim X''+1$ for $X''=\PP{n_1}\times\PP{n_2}$.
Second, the intersection $A''\cap B''$ may have positive dimension, whereas Tsukioka required it to be zero-dimensional.

\begin{introthm}[\cref{Theorem: existence of small contraction}]
    \label{introthm: existence of small contraction}
    Let $X''$ be a smooth quasi-projective variety of dimension $\ge3$, and let $A'',B''\subseteq X''$ be smooth subvarieties of codimension $a,b\ge2$.
    Assume that
    \begin{itemize}
        \item $A''\nsupseteq B''$,
        \item the scheme-theoretic intersection $Z''\coloneq A''\cap B''$ is nonempty and smooth, and
        \item each irreducible component of $Z''$ has codimension $<a+b$ in $X''$.
    \end{itemize}
    Let $X$ be the blowup of $X''$ successively along $A''$ and along the strict transform of $B''$.
    Then there exists a birational contraction $\psi\colon X\to X_0$ over $X''$ with $\rho(X/X_0)=1$.
    Moreover,
    \begin{enumerate}
        \item $\psi$ is a small contraction if and only if $A''\nsubseteq B''$, and
        \item $\psi$ is $K_X$-extremal if and only if $a>b$.
    \end{enumerate}
\end{introthm}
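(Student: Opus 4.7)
The strategy is to exhibit $\NEbar(X/X'')$ as a two-dimensional cone with one ray contracted by $\pi_2$ and to contract the other ray.

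I would set up the blowup sequence $X\xrightarrow{\pi_2}X'\xrightarrow{\pi_1}X''$, writing $E_A':=\pi_1^{-1}(A'')\cong\PB(N_{A''/X''}^\vee)$ and letting $E_B$ be the $\pi_2$-exceptional divisor. Since $Z''$ is smooth and equals $A''\cap B''$ scheme-theoretically, the strict transform $B'\subset X'$ is isomorphic to $\Bl_{Z''}B''$, and the natural inclusion $N_{Z''/B''}\hookrightarrow N_{A''/X''}|_{Z''}$ realizes $E_A'\cap B'\cong\PB(N_{Z''/B''}^\vee)$ inside $E_A'$. The hypothesis $A''\nsupseteq B''$ forces $B'\nsubseteq E_A'$, so the strict transform $E_A$ of $E_A'$ equals $\pi_2^*E_A'$. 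Hence $\{E_A,E_B\}$ is a $\QB$-basis for $\Pic(X/X'')$, giving $\rho(X/X'')=2$ and $K_X=\pi^*K_{X''}+(a-1)E_A+(b-1)E_B$ with $\pi:=\pi_1\pi_2$.

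Next I would pin down the two extremal rays of $\NEbar(X/X'')$. The fiber class $f_B$ of $E_B\to B'$ satisfies $(E_A\cdot f_B,E_B\cdot f_B)=(0,-1)$ and spans the ray $R_1$ contracted by $\pi_2$. For the second ray, fix $z$ in a component of $Z''$ of codimension $c$ and set $e:=a+b-c$. The strict transform $S_z\subset X$ of $\pi_1^{-1}(z)\cong\PP{a-1}$ is the blowup of $\PP{a-1}$ along the linear subspace $\PB(N_{Z''/B''}|_z^\vee)\cong\PP{a-e-1}$, carrying a natural $\PP{a-e}$-bundle structure $\varphi_z\colon S_z\to\PP{e-1}$. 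Take $\ell_f\subset S_z$ to be a line in a fiber of $\varphi_z$. A local computation at $z$ yields $(E_A\cdot\ell_f,E_B\cdot\ell_f)=(-1,1)$, so $R_2:=\RB_{\geq 0}[\ell_f]$ is distinct from $R_1$ and together they generate $\NEbar(X/X'')$.

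The crux is to contract $R_2$. Set $D:=-E_A-E_B$; then $D\cdot f_B=1$, $D\cdot\ell_f=0$, and the fiberwise identity $D|_{S_z}=\varphi_z^*\OC_{\PP{e-1}}(1)$ holds. Combined with $\pi_1$-ampleness of $-E_A'$ (a standard fact for blowups along smooth centers), this allows one to verify $D$ is $\pi$-nef by checking $D\cdot C\geq 0$ for every $\pi$-vertical curve $C$ (which must lie in $E_A\cup E_B$), and then to conclude $D$ is $\pi$-semiample via relative Kawamata base-point-freeness or by exhibiting sections via the $\varphi_z$-fibrations. Setting $X_0:=\Proj_{X''}\bigl(\bigoplus_{m\geq 0}\pi_*\OC_X(mD)\bigr)$, one obtains $\psi\colon X\to X_0$ with $D$ being $\psi$-ample and $\psi$-trivial exactly on $R_2$, so $\rho(X/X_0)=1$.

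Finally, the exceptional locus of $\psi$ is the closure of the locus swept by $\ell_f$-curves, which equals $\bigcup_{z\in Z''}S_z$; over each component of $Z''$ of codimension $c_i$, this contributes codimension $b-e_i+1$ in $X$ with $e_i:=a+b-c_i$. Being at least $2$ for every $i$ is equivalent to $c_i>a$ for every $i$, i.e., to $A''\nsubseteq B''$, giving~(1). And $K_X\cdot\ell_f=-(a-1)+(b-1)=b-a<0$ iff $a>b$, giving~(2). The principal obstacle is the $\pi$-semiampleness of $D$ when $a\leq b$: there $R_2$ is not $K_X$-negative and the cone theorem does not apply directly, so the argument relies on the geometric identity $D|_{S_z}=\varphi_z^*\OC(1)$ together with $\pi_1$-ampleness of $-E_A'$ as a substitute for $K$-negativity.
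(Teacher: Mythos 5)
Your strategy is the same as the paper's: compute $\NEbar(X/X'')=\RB^+[e,f]$ (your $[\ell_f]$, $[f_B]$), show $D\coloneq-(E+F)$ is nef over $X''$, and contract the face $D^\perp$. But the two steps you compress into a sentence are exactly where the work lies, and both are genuine gaps as written. First, nefness of $D$ over a point $z\in Z''$ does not follow from $D\rvert_{S_z}=\varphi_z^*\OC(1)$ together with relative ampleness of $-E'$: the fiber $\varphi^{-1}(z)$ has a \emph{second} irreducible component $F_z=F\cap\varphi^{-1}(z)$, namely the $\PP{b-1}$-bundle $\PB_{B'_z}(\con{B'}{X'}\rvert_{B'_z})$ over the linear space $B'_z$, on which $-F$ restricts to the tautological class. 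Since $\con{B'}{X'}\rvert_{B'_z}\cong\OC_{B'_z}^{\oplus(b-c)}\oplus\OC_{B'_z}(-1)^{\oplus c}$ (where $a+b-c$ is the codimension of the component of $Z''$ through $z$) is \emph{not} nef, there are curves in $F_z$ on which $-F$ is negative, and neither your fiberwise identity on $S_z$ nor the nefness of $-E$ controls them separately; one needs precisely this conormal computation to see that twisting by $-E\rvert_{F_z}=(\pi\rvert_{F_z})^*\OC_{B'_z}(1)$ restores nefness. Your assertion that $[f_B]$ and $[\ell_f]$ generate $\NEbar(X/X'')$ is dual to this nefness statement and so cannot be used to shortcut it; the same computation is also what forces $\Exc(\psi)\subseteq\bigcup_zS_z$, which you state without proof.

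Second, and more seriously, when $a<b$ neither of your proposed mechanisms actually produces the contraction. Relative Kawamata base point freeness with trivial boundary requires $mD-K_X$ to be nef over $X''$, which fails on $\ell_f$ itself since $(mD-K_X).\ell_f=a-b<0$; and sections of $D\rvert_{S_z}$ over individual points $z\in Z''$ do not assemble into relative semiampleness over $X''$. The paper closes exactly this gap with an explicit klt boundary: setting $m=b-a+1$, it chooses general members $\Delta''_1,\dotsc,\Delta''_m$ of a very ample system on $X''$ containing $B''$ and transverse to $A''$, so that their strict transforms lie in $\linsys{\varphi^*H''-F}$, and puts $\Delta=(1-\tfrac1{2m})\sum_i\Delta_i$; then $(X,\Delta)$ is klt with $(K_X+\Delta).e=-\tfrac12<0$, and the contraction theorem applies to the ray. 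Some such device is indispensable here: without it your argument only establishes the existence of $\psi$ when $a>b$, so the existence claim of the theorem (and with it the ``if and only if'' in statement (2)) is not proved in the remaining cases.
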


We also provide an explicit description of the target $X_0$.
We show that $X_0$ is isomorphic to the blowup of $X''$ along the union $A''\cup B''$.
This description remains valid even if $a<b$, which enables us to explicitly construct the flip of $\psi$.

\begin{introthm}[\cref{Theorem: existence of small contraction,corollary: flip}]
    \label{introthm: flip}
    In the setting of \cref{introthm: existence of small contraction}, the following hold:
    \begin{enumerate}
        \item The variety $X_0$ is the blowup of $X''$ along the union $A''\cup B''$.
        \item If $A''\nsubseteq B''$ and $a>b$, then the flip $X^+$ of $\psi$ is the blowup of $X''$ successively along $B''$ and along the strict transform of $A''$.
    \end{enumerate}
\end{introthm}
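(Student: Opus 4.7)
For the first assertion, my plan is to construct a morphism $\phi\colon X \to X_1 := \Bl_{A''\cup B''}(X'')$ via the universal property of blowup, and then identify $X_1$ with the target $X_0$ produced by \cref{introthm: existence of small contraction}. The key verification is that the ideal sheaf $\IC_{A''}\cap\IC_{B''}$ pulls back to an invertible ideal on $X$: both $\IC_{A''}\cdot\OC_X$ and $\IC_{B''}\cdot\OC_X$ are invertible, being ideals of effective Cartier total transforms of $A''$ and $B''$ under $\pi\colon X\to X''$; since $X$ is smooth, its local rings are UFDs, so the intersection of two principal ideals at any stalk is again principal, and a short commutative-algebra argument identifies this with $(\IC_{A''}\cap\IC_{B''})\cdot\OC_X$. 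The universal property of blowup then yields $\phi\colon X\to X_1$ over $X''$. To identify $X_0\cong X_1$, I would show that $\phi$ contracts the same extremal ray of $\NEbar(X/X'')$ as $\psi$: the two exceptional divisors of $\pi$ get identified to a single irreducible one over $Z''=A''\cap B''$ in $X_1$, so the curves in their intersection lying over $Z''$ (which are precisely the curves contracted by $\psi$) are collapsed by $\phi$. The universal factorization of contractions of extremal rays then yields $\phi = \alpha\circ\psi$ for some $\alpha\colon X_0\to X_1$, which is an isomorphism by a relative Picard-rank comparison over $X''$.

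For the second assertion, let $X^+$ be the blowup of $X''$ along $B''$ and then along the strict transform of $A''$. I would first check that \cref{introthm: existence of small contraction} applies with $(A'',B'')$ swapped to $(B'',A'')$: the smoothness of $Z''$ and the codimension bound are symmetric in the two subvarieties, while the required $B''\nsupseteq A''$ follows from $a>b$. Then \cref{introthm: existence of small contraction} together with part (1) applied to $X^+$ produces a birational contraction $\psi^+\colon X^+\to X_0^+$ with $X_0^+ = \Bl_{B''\cup A''}(X'') = X_0$. Smallness of $\psi^+$ follows from \cref{introthm: existence of small contraction}(1) with swapped roles, since $a>b$ forces $\dim B''>\dim A''$ and hence $B''\nsubseteq A''$. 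To conclude $\psi^+$ is the flip of $\psi$, it remains to verify $K_{X^+}$ is $\psi^+$-ample. By \cref{introthm: existence of small contraction}(2) with swapped roles, $\psi^+$ is $K$-extremal (i.e.\ $K$-negative) iff $b>a$, which fails; using the explicit sign of $K_{X^+}\cdot C^+$ on the contracted curve class $C^+$ that is built into the proof of \cref{introthm: existence of small contraction}(2), the strict inequality $a>b$ yields $K_{X^+}\cdot C^+>0$. Thus $K_{X^+}$ is $\psi^+$-ample and $\psi^+$ satisfies the definition of the flip of $\psi$.

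The principal obstacle is in part (1): matching the extremal ray contracted by the universal morphism $\phi$ with the one contracted by $\psi$. This requires careful analysis of the reducible scheme $A''\cup B''$ and its blowup, including the stalkwise behavior of $\IC_{A''}\cap\IC_{B''}$ and the fact that $X_1\to X''$ has a single ``merged'' exceptional divisor over $Z''$ despite $A''\cup B''$ having two components. In part (2), the delicate step is extracting strict $K^+$-positivity from the qualitative failure of $K$-extremality; this relies on the quantitative intersection computation in the proof of \cref{introthm: existence of small contraction}(2), rather than on the $K$-extremality criterion alone.
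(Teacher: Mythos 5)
Part (2) of your proposal is essentially the paper's argument (\cref{corollary: flip}): apply \cref{Theorem: existence of small contraction} with the roles of $A''$ and $B''$ interchanged, note that the hypotheses are symmetric and that $a>b$ gives $B''\nsubseteq A''$, identify the two targets via part (1), and get strict $K_{X^+}$-positivity from $K_{X^+}.e^+=a-b>0$. That is fine. The problems are in part (1), where your proposal has two genuine gaps.

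First, the invertibility step is based on a false premise. You assert that $\IC_{B''}\cdot\OC_X$ is invertible because it is "the ideal of an effective Cartier total transform of $B''$". But $B''$ has codimension $b\ge2$, and $X$ is obtained by blowing up the \emph{strict} transform $B'$, not the total transform; the scheme-theoretic preimage $\varphi^{-1}(B'')$ contains, besides $F$, the strict transform $W=\pi_*^{-1}(\pi')^{-1}(Z'')$, which has codimension $b-c+1\ge2$ exactly when $A''\nsubseteq B''$ — the case of interest. So $\IC_{B''}\cdot\OC_X$ is not invertible there, and the UFD/lcm argument has nothing to apply to. Moreover, even if both pullbacks were principal, forming the inverse image ideal does not commute with intersection: one only has $(\IC_{A''}\cap\IC_{B''})\cdot\OC_X\subseteq(\IC_{A''}\cdot\OC_X)\cap(\IC_{B''}\cdot\OC_X)$ in general. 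The correct statement $\varphi^{-1}\IC_{A''\cup B''}\cdot\OC_X=\OC_X(-E-F)$ is true, but it must be checked directly, e.g.\ by the local computation $\pf_A\cap\pf_B=(x_1,\dotsc,x_c)+(y_iz_j)$ in a regular system of parameters adapted to $A''$, $B''$, $Z''$, followed by a chart-by-chart pullback through the two blowups.

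Second, and more seriously, your identification $X_0\cong X_1$ is not established. Granting the universal morphism $\phi\colon X\to X_1$ and that $\phi$ contracts exactly $\RB^+[e]$, rigidity gives a factorization $\phi=\alpha\circ\psi$ with $\alpha\colon X_0\to X_1$ finite and birational. A relative Picard-rank comparison cannot upgrade this to an isomorphism: a finite birational morphism onto a non-normal target (e.g.\ a normalization map) contracts no curves and need not change any such invariant. What is needed — and what the paper spends most of its effort on — is the \emph{normality} of $X_1=\Bl_{A''\cup B''}(X'')$ (\cref{claim: X1 normal}), proved by showing that the Rees algebra of $I=\pf_A\cap\pf_B$ is normal via the normal torsion-freeness criterion and the monomial identity $I^n=\pf_A^n\cap\pf_B^n$. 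Only then does Zariski's main theorem force $\alpha$ to be an isomorphism. Your proposal omits this point entirely; as written it only shows that $X_0$ is the normalization of $X_1$.
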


The proof of \cref{introthm: existence of small contraction} relies on determining the relative nef cone and the cone of curves of $X$ over $X''$.
Specifically, we prove that
\begin{align*}
\Nef(X/X'') &= \RB^+[-E,-E-F],\\
\NEbar(X/X'') &= \RB^+[e,f].
\end{align*}
Here, $E$ is the pullback to $X$ of the exceptional divisor of the first blowup $X'\to X''$, and $F$ is the exceptional divisor of the second blowup $X\to X'$.
Moreover, $e$ and $f$ are curves in $X$ satisfying
\[
    -E.e=-(E+F).f=1,\quad-E.f=-(E+F).e=0.
\]
The birational contraction $\psi$ in \cref{introthm: existence of small contraction} contracts the extremal ray $\RB^+[e]$.
Equivalently, it is the contraction defined by the semiample$/X''$ divisor $-E-F$.
Note that the other extremal ray $\RB^+[f]$ corresponds to the blowdown $X\to X'$.

\subsection{Nef divisors on two successive blowups}

We retain the notation from \cref{introthm: existence of small contraction}.
While the proof of \cref{introthm: existence of small contraction} relies solely on the relative geometry of $X$ over $X''$, our second main result investigates the global geometry of $X$, specifically the nef cone $\Nef(X)$.
We provide sufficient conditions for the divisors
\[
	\varphi^*H''-E,\quad\varphi^*H''-E-F
\]
to be nef, where $\varphi\colon X\to X''$ is the composite blowup and $H''$ is an $\RB$-divisor on $X''$.
This result generalizes the approach of Tsukioka \cite{tsukioka_2022_on_weak_fano_manifolds_with_small_contractions_obtained_by_blowups_of_a_product_of_projective_spaces,tsukioka_2023_some_examples_of_log_fano_structures_on_blowups_along_subvarieties_in_products_of_two_projective_spaces}.

\begin{introthm}[\cref{lemma: nefness of H-E,lemma: nefness of H-E-F}]
    \label{introthm: nefness of H-E-F}
    Let $X''$ be a smooth projective variety, and let $A'',B''\subseteq X''$ be smooth subvarieties of codimension $a,b\ge1$ respectively.
    Assume that
    \begin{itemize}
        \item $A''\nsupseteq B''$, and
        \item $Z''\coloneq A''\cap B''$ is irreducible, smooth, and of codimension $a+b-c$ in $X''$ for some $c\ge0$.
    \end{itemize}
    Let $\pi'\colon X'\to X''$ be the blowup along $A''$ with exceptional divisor $E'$, and let $\pi\colon X\to X'$ be the blowup along the strict transform $B'\coloneq (\pi')_*^{-1}B''$ with exceptional divisor $F$.
    Set $E\coloneq \pi^*E'$ and $\varphi\coloneq\pi'\circ\pi\colon X\to X''$.
    Let $H''$ be an $\RB$-divisor on $X''$.
    \begin{enumerate}
        \item Assume that there exists a chain of smooth subvarieties
            \[
                X''=X''_0\supseteq X''_1\supseteq\dotsb\supseteq X''_a=A''
            \]
            such that
            \begin{itemize}
                \item $X''_{i+1}$ is a prime divisor on $X''_i$ for $0\le i<a$, and
                \item $H''\rvert_{X''_i}-X''_{i+1}$ is nef on $X''_i$ for $0\le i<a$.
            \end{itemize}
            Then the divisor $\varphi^*H''-E$ on $X$ is nef.
        \item Assume that there exist chains of smooth subvarieties
            \begin{align*}
                X''=X''_0\supseteq X''_1\supseteq\dotsb\supseteq X''_c&=A''_c\supseteq A''_{c+1}\supseteq\dotsb\supseteq A''_a=A'',\\
                X''_c&=B''_c\supseteq B''_{c+1}\supseteq\dotsb\supseteq B''_b=B''
            \end{align*}
            such that, setting $Z''_{i,j}\coloneq A''_i\cap B''_j$ for $c\le i\le a$ and $c\le j\le b$,
            \begin{itemize}
                \item $X''_{i+1}$ is a prime divisor on $X''_i$ for $0\le i<c$,
                \item $Z''_{i+1,j}$ and $Z''_{i,j+1}$ are smooth prime divisors on $Z''_{i,j}$ for $i,j\ge c$,
                \item $H''\rvert_{X''_i}-X''_{i+1}$ is nef on $X''_i$ for $0\le i<c$,
                \item $H''\rvert_{Z''_{i,j}}-Z''_{i+1,j}-Z''_{i,j+1}$ is nef on $Z''_{i,j}$ for $c\le i<a$ and $c\le j<b$.
            \end{itemize}
            Then the divisor $\varphi^*H''-E-F$ on $X$ is nef.
    \end{enumerate}
\end{introthm}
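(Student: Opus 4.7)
For part (1), since $\varphi^{*}H''-E=\pi^{*}\bigl((\pi')^{*}H''-E'\bigr)$ and pullbacks of nef divisors along proper morphisms are nef, it suffices to show that $(\pi')^{*}H''-E'$ is nef on $X'$. I would prove this by induction on the chain length $a$. The base case $a=1$ is immediate: $A''$ is a prime divisor, $\pi'$ is an isomorphism, and the divisor reduces to $H''-X''_{1}$, nef by hypothesis. For the inductive step, since $X''_{1}$ is a smooth prime divisor containing $A''$ smoothly, the identity $(\pi')^{*}X''_{1}=X'_{1}+E'$ yields the factorization
\[
(\pi')^{*}H''-E'=(\pi')^{*}(H''-X''_{1})+X'_{1}.
\]
For a curve $C\not\subset X'_{1}$, the intersection is nonnegative by nefness of $H''-X''_{1}$ (via projection formula) together with effectiveness of $X'_{1}$. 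For $C\subset X'_{1}\cong\Bl_{A''}X''_{1}$, the restriction equals the analogous divisor on the shorter chain $X''_{1}\supseteq\dotsb\supseteq A''$ with $H''|_{X''_{1}}$ in place of $H''$, and the inductive hypothesis applies.

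For part (2), I would use a nested induction: outer on $c$, inner on $a$ in the base case $c=0$. In the outer inductive step $c\ge1$, $X''_{1}$ contains both $A''$ and $B''$ smoothly, and since $B''\nsubseteq A''$, a direct computation through the two successive strict transforms gives $\varphi^{*}X''_{1}=X_{1}+E+F$, hence
\[
\varphi^{*}H''-E-F=\varphi^{*}(H''-X''_{1})+X_{1}.
\]
Curves outside $X_{1}$ are handled as in part (1). For $C\subset X_{1}\cong\Bl_{B'}\Bl_{A''}X''_{1}$, the restriction $(\varphi^{*}H''-E-F)|_{X_{1}}$ equals $\varphi_{1}^{*}(H''|_{X''_{1}})-E_{1}-F_{1}$, to which the inductive hypothesis applies with parameters $(a-1,b-1,c-1)$ after reindexing the chains.

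For the outer base case $c=0$, $A''$ and $B''$ meet properly, so $B''\nsubseteq A''_{1}$ and $A''\nsubseteq B''_{1}$, giving $\varphi^{*}A''_{1}=A_{1}+E$ and $\varphi^{*}B''_{1}=B_{1}+F$, hence
\[
\varphi^{*}H''-E-F=\varphi^{*}(H''-A''_{1}-B''_{1})+A_{1}+B_{1}.
\]
In the inner base case $a=1$, $\pi'$ is an isomorphism and $E$ equals the strict transform $A$ of $A''=A''_{1}$; a modified factorization $\varphi^{*}H''-E-F=\pi^{*}(H''-A''-B''_{1})+B_{1}$ reduces, upon restriction to $B_{1}\cong\Bl_{B''}B''_{1}$, to a single-blowup problem solved by applying part (1) with base $B''_{1}$, center $B''$, and divisor $H''|_{B''_{1}}-A''\cap B''_{1}$. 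For the inner inductive step $a\ge2$, restriction to $A_{1}$ (respectively $B_{1}$) gives a double blowup of $A''_{1}$ (respectively $B''_{1}$) of the same type with $a$ (respectively $b$) decreased by one and $c=0$ preserved, and the restricted chains satisfy the inductive hypothesis.

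The main technical obstacle will be bookkeeping the blowup identities such as $\varphi^{*}X''_{1}=X_{1}+E+F$, $\varphi^{*}A''_{1}=A_{1}+E$, and $\varphi^{*}B''_{1}=B_{1}+F$, together with the restriction identities $E|_{X_{1}}=E_{1}$, $F|_{X_{1}}=F_{1}$ and their analogs for $A_{1}$ and $B_{1}$. These are multiplicity-one computations that rely on checking smoothness of each center inside the relevant divisor, and verifying carefully that the restricted chains and $H''$-restrictions satisfy the hypotheses of either the inductive claim or of part (1).
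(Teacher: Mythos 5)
Your proposal is sound in substance, but it takes a genuinely different and more laborious route for part (2) than the paper, and the induction as you have set it up does not close. For part (1) you and the paper do the same thing: the paper applies Tsukioka's criterion (\cref{lemma: nefness of divisor}) to the chain of strict transforms $X_i=\pi^{-1}_*X'_i$, using $\pi_i^*X''_{i+1}=X_{i+1}+E_i$; your induction on $a$ with the decomposition $(\pi')^*H''-E'=(\pi')^*(H''-X''_1)+X'_1$ is just that criterion unwound. For part (2), however, the paper never performs a two-dimensional recursion on $X$ itself. It applies part (1) \emph{to the second blowup} $\pi\colon X\to X'$ along the single chain $X'_0\supseteq\dotsb\supseteq X'_c=B'_c\supseteq\dotsb\supseteq B'_b=B'$ of strict transforms in $X'$; the conditions on the $X'_i$ reduce to pullbacks of the nef divisors $H''_i-X''_{i+1}$, and each remaining condition $(H'-E')\rvert_{B'_i}-B'_{i+1}=(\pi'_{B,i})^*(H''_{B,i}-B''_{i+1})-E'\rvert_{B'_i}$ is a single-blowup statement on $B'_i\to B''_i$ handled by a second application of part (1) along the chain $Z''_{c,i}\supseteq\dotsb\supseteq Z''_{a,i}$. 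This linearizes your grid of $Z''_{i,j}$: one descends first in the $j$-direction inside $X'$, then in the $i$-direction inside each $B''_i$, and one never needs the decomposition $\varphi^*H''-E-F=\varphi^*(H''-A''_1-B''_1)+A_1+B_1$ with \emph{two} effective pieces, nor the simultaneous recursion into $A_1$ and $B_1$ with its clean-intersection bookkeeping (identifying $B'\cap A'_1$ with a strict transform of $Z''_{1,b}$, $Z''_{1,1}\rvert_{B''_j}$ with $Z''_{1,j}$, etc.). What your route buys is independence from \cref{lemma: nefness of divisor} as a black box; what it costs is all of the deferred verification you acknowledge at the end.

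The concrete flaw: in your inner inductive step at $c=0$, the $B_1$-branch produces a configuration over $B''_1$ with the \emph{same} value of $a$ (first center $Z''_{a,1}=A''\cap B''_1$, still of codimension $a$ in $B''_1$) and $b$ replaced by $b-1$, so an induction on $a$ alone is circular there. You must induct on $a+b$ or on $\dim X''$ (which does drop in every branch) and supply the symmetric base case $b=1$, where $\pi$ is an isomorphism, $F$ is the strict transform of $B''$, and the factorizations change. This is repairable, but as written the argument for part (2) is incomplete.
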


\subsection{Applications to smooth weak Fano fourfolds}

Fano varieties play a central role in the MMP.
The classification of smooth Fano varieties, in particular, is a long-standing problem in algebraic geometry.
While the classification of smooth Fano threefolds is complete, the higher-dimensional case remains an active area of research.

Tsukioka \cite{tsukioka_2022_on_weak_fano_manifolds_with_small_contractions_obtained_by_blowups_of_a_product_of_projective_spaces,tsukioka_2023_some_examples_of_log_fano_structures_on_blowups_along_subvarieties_in_products_of_two_projective_spaces} applied the blowup constructions described in \cref{subsection: intro construction} to produce higher-dimensional examples of smooth Fano, weak Fano, and Fano type varieties.
In \cite{tsukioka_2022_on_weak_fano_manifolds_with_small_contractions_obtained_by_blowups_of_a_product_of_projective_spaces}, he considered the blowup of $X''=\PP n\times\PP1$ along a fiber $A''\cong\PP1$ and a codimension-two subvariety $B''$ in a specified class, establishing criteria for the resulting variety $X$ to be Fano or weak Fano.
Subsequently, in \cite{tsukioka_2023_some_examples_of_log_fano_structures_on_blowups_along_subvarieties_in_products_of_two_projective_spaces} he extended this to $\PP{n_1}\times\PP{n_2}$, with different centers $A''$ and $B''$, determining conditions under which $X$ is Fano, weak Fano, or of Fano type.

Our work is further motivated by Casagrande's recent results \cite{casagrande_2024_fano_4folds_with_b2_12_are_products_of_surfaces,casagrande_2025_towards_the_classification_of_Fano_4-folds_with_b2_7} on the Picard number of smooth Fano fourfolds:
\begin{introthm}[{\cite[Theorem 1.1]{casagrande_2025_towards_the_classification_of_Fano_4-folds_with_b2_7}}]
    Let $X$ be a smooth Fano fourfold of Picard number $\rho(X)\ge10$.
    Then $X$ is isomorphic to the product of two smooth del Pezzo surfaces.
\end{introthm}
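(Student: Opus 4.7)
The plan is to organize the argument around the Lefschetz defect $\delta_X:=\max\{\rho(X)-\rho(D):D\subseteq X\text{ prime divisor}\}$, a measure of how far one can drop the Picard number by restriction to a single prime divisor. I would aim to prove two separate structural statements whose conjunction yields the theorem: (i) any smooth Fano fourfold with $\delta_X\ge3$ is a product of two smooth surfaces, and (ii) any smooth Fano fourfold with $\rho(X)\ge10$ satisfies $\delta_X\ge3$. This matches the strategy Casagrande pursued across the two papers cited in the introduction, with (i) playing the role of the earlier structural result and (ii) being the new classification step.

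For (i), I would start with a prime divisor $D\subseteq X$ realizing $\rho(X)-\rho(D)\ge3$. The key point is that the kernel of the restriction map $N_1(D)_{\RB}\to N_1(X)_{\RB}$ then spans a face $\tau$ of $\NEbar(X)$ of codimension $\ge3$, and using the Fano hypothesis one can show via the Contraction Theorem that $\tau$ defines a fiber-type contraction $f\colon X\to Y$. A parallel analysis with a second complementary face, or an iteration of the same construction on $Y$, produces a second fibration $g\colon X\to Z$. The product morphism $(f,g)\colon X\to Y\times Z$ is then shown to be an isomorphism: it is birational for dimension reasons, and the Fano condition together with flatness of elementary Mori contractions prevents any positive-dimensional exceptional locus. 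Once $X\cong S_1\times S_2$, the Künneth decomposition $-K_X=p_1^*(-K_{S_1})+p_2^*(-K_{S_2})$ and the ampleness criterion for divisors on a product force both $-K_{S_i}$ to be ample.

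For (ii), I would proceed by contrapositive and assume $\delta_X\le2$. Under this assumption, every prime divisor $D\subseteq X$ satisfies $\rho(D)\ge\rho(X)-2$, which allows a delicate bookkeeping on the extremal rays of $\NEbar(X)$: each $K_X$-negative extremal ray contracts to produce a divisor or a lower-dimensional base, and the restriction of classes to prime divisors constrains how many independent extremal rays can coexist. I would run a case analysis classifying the types (divisorial, small, fiber-type) of elementary contractions of $X$, use boundedness of the length of extremal rays on smooth Fano fourfolds, and invoke the standard fact that $K_X$-negative faces of small codimension give rise to Mori fibrations onto well-controlled targets; combining these one bounds $\rho(X)$ by $9$.

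The hard part is unambiguously (ii): one must rule out every configuration of extremal contractions compatible with $\delta_X\le2$ and large Picard number. This is where induction on the Picard number of divisors, a careful study of covering families of rational curves, and the recursive application of the classification to lower-dimensional Fanos all enter. I expect the main obstacle to be not any single estimate but the combinatorial explosion of cases corresponding to possible $\NEbar(X)$ structures; controlling this is precisely the content of Casagrande's multi-paper program, and attempting the theorem without that machinery would require reconstructing a substantial portion of it.
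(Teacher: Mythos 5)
This statement is not proved in the paper at all: it is quoted verbatim from Casagrande's work (cited as \cite[Theorem 1.1]{casagrande_2025_towards_the_classification_of_Fano_4-folds_with_b2_7}) purely as motivation for the constructions in \cref{section: weak Fano 4-fold}. So there is no in-paper proof to compare against, and your proposal has to be judged as a free-standing attempt at Casagrande's theorem. Judged that way, it has two genuine gaps. First, your step (i) is stated with the wrong threshold: the known structural result says that a Fano manifold with Lefschetz defect $\delta_X\ge4$ splits as $S\times T$ with $S$ a del Pezzo surface; for $\delta_X=3$ the conclusion is much weaker (one only gets a conic-bundle-type structure, and there exist Fano fourfolds with $\delta_X=3$ that are not products). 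So the clean dichotomy you propose --- ``$\delta_X\ge3$ gives a product, $\delta_X\le2$ forces $\rho\le9$'' --- does not hold as stated, and the case $\delta_X=3$ would need its own separate and substantial classification.

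Second, and more importantly, your step (ii) contains no actual argument. You correctly identify that one must bound $\rho(X)$ under an upper bound on $\delta_X$ by analyzing the possible elementary contractions, but everything you list (bookkeeping on extremal rays, length bounds, induction on Picard numbers of divisors, covering families of rational curves) is a description of the toolkit rather than a proof; you acknowledge as much in your final paragraph. The passage from $\rho\ge13$ down to $\rho\ge10$ is precisely the content of two long papers, and no step of your outline isolates a concrete inequality or case split that would yield the bound $\rho\le9$. As it stands the proposal is a research plan, not a proof, and it would not be accepted as establishing the theorem.
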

Aside from products, only five families of smooth Fano fourfolds with Picard number $7$, $8$, or $9$ are known (see \cite[p.1]{casagrande_2025_towards_the_classification_of_Fano_4-folds_with_b2_7} and \cite{casagrande_codogni_fanelli_2019_the_blowup_of_bbb_p4_at_8_points_and_its_fano_model_via_vector_bundles_on_a_del_pezzo_surface}).
Constructing new examples with large Picard number is thus of significant interest.
Note that the existence of a small contraction on a variety implies that it is not a product of del Pezzo surfaces.

In this paper, we apply the blowup construction to the product $X''=X_1\times X_2$ of two del Pezzo surfaces.
We prove that the resulting variety $X$ admits a small contraction (\cref{introthm: existence of small contraction}), which implies that $X$ is not a product of del Pezzo surfaces.
By \cref{introthm: nefness of H-E-F}, we also obtain precise criteria for $X$ to be Fano, weak Fano, or of Fano type.
Thus, our construction extends Tsukioka's work for $X''=\PP2\times\PP2$ and complements Casagrande's results by providing examples with larger Picard numbers.

\begin{introthm}[\cref{cor: characterization of Fano}]
    \label{introthm: weak Fano 4-fold of Picard number 8 with a small contraction}
    Let $0\le r_1,r_2\le8$ be integers.
    Let $X_i$ be the blowup of $\PP2$ at $r_i$ points in general position.
    Let $a_1\in X_1$ be a general point, $A_2\subseteq X_2$ be the strict transform of a general line in $\PP2$, and let $b_2\in A_2$ be a general point.
    Set $X''\coloneq X_1\times X_2$, $A''\coloneq\{a_1\}\times A_2$, and $B''\coloneq X_1\times\{b_2\}$.
    Let $X$ be the blowup of $X''$ successively along $A''$ and along the strict transform of $B''$.

    Then $X$ is a smooth projective fourfold admitting a $K_X$-extremal small contraction.
    Moreover,
    \begin{enumerate}
        \item $X$ is Fano if and only if $r_1=r_2=0$.
        \item $X$ is weak Fano if and only if $r_1\le5$ and $r_2\le1$.
        \item $X$ is of Fano type if and only if
            \[
            \begin{cases}
                r_1\le 6,\ r_2\le 7, \quad\text{or}\\
                r_1=7,\ r_2\le 5.
            \end{cases}
            \]
    \end{enumerate}
\end{introthm}

Since each such $X$ admits a $K_X$-extremal small contraction, it is not a product of del Pezzo surfaces.
For instance, taking $(r_1,r_2)=(5,1)$ yields a smooth weak Fano fourfold with $\rho(X)=10$ that carries such a small contraction.
However, since these examples are only weak Fano and not Fano, they do not directly complement Casagrande's result.
Producing genuinely Fano examples of large Picard number in this way appears to require a different choice of the ambient variety $X''$ and of the centers, or substantially different methods, and we leave this as an open problem.


\begin{Out}
    \Cref{section: preliminaries} collects preliminaries.
    In \cref{section: nefness of divisors on blowups}, we present general results on the nefness of divisors on blowups and prove \cref{introthm: nefness of H-E-F}.
    \Cref{section: construction} is devoted to the general blowup construction of varieties admitting small contractions.
    We analyze the relative nef cone and the cone of curves to prove \cref{introthm: existence of small contraction,introthm: flip}.
    Finally, \cref{section: weak Fano 4-fold} applies this theory to products of del Pezzo surfaces to prove \cref{introthm: weak Fano 4-fold of Picard number 8 with a small contraction}.
\end{Out}

\begin{NoCon}
    Conventions and notation used in this paper are as follows:
    \begin{itemize}
        \item All subvarieties are assumed to be closed.
        \item Intersections $A\cap B$ are taken scheme-theoretically.
        \item $N^1(X/S)$ denotes the real vector space of numerical classes of divisors on $X$ relative to $S$.
        \item $N_1(X/S)$ denotes the real vector space of numerical classes of curves on $X$ that are contracted to $S$.
        \item $\RB^+[A_1, \ldots, A_m]$ denotes the convex cone generated by $A_1,\dotsc,A_m$.
        \item A \emph{$K_X$-extremal} contraction is a contraction $\pi\colon X\to Y$ such that $\rho(X/Y)=1$ and $-K_X$ is $\pi$-ample.
    \end{itemize}
\end{NoCon}

\begin{Ack}
    This work was initiated during the authors’ visit to Taipei in November--December 2024.
    The authors thank Professor Hsueh-Yung Lin for his hospitality.
    The first author thanks his advisor, Professor Keiji Oguiso, for his continuous support.
    The second author is grateful to Professor Yasunari Nagai for helpful discussions and comments.
    The authors also thank Professor Toru Tsukioka for clarifications regarding his earlier work, and Professor Makoto Enokizono for useful discussions.
    
    The first author was supported by JSPS KAKENHI Grant Number JP24KJ0856 and by the WINGS-FMSP program at the Graduate School of Mathematical Sciences, the University of Tokyo.
    The second author was partially supported by JST SPRING, Grant Number JPMJSP2128.
\end{Ack}
\section{Preliminaries}
\label{section: preliminaries}

\subsection{Conormal sheaves}
In this subsection, we briefly review some basic facts about conormal sheaves of subvarieties, which will be needed in \cref{section: construction}.

Let $X$ be a variety, and let $Z\subseteq X$ be a (closed) subvariety with ideal sheaf $\IC\subseteq\OC_X$.
The \emph{conormal sheaf} $\con ZX$ of $Z$ in $X$ is the sheaf on $Z$ defined by
\[
    \con ZX = \IC/\IC^2.
\]

Let
\[
\begin{tikzcd}
    Z' \arrow[r, hookrightarrow] \arrow[d,"g"']
        & X'\arrow[d, "f"]\\
    Z \arrow[r, hookrightarrow]
        & X
\end{tikzcd}
\]
be a commutative diagram of subvarieties.
Then there is a canonical morphism
\begin{equation}
    g^*\con ZX\longrightarrow \con{Z'}{X'}.
    \label{equation: hom from pullback}
\end{equation}
If $f$ is flat and $Z'$ is the pullback $f^{-1}(Z)$ of $Z$, then \cref{equation: hom from pullback} is an isomorphism.

Let $X$ be a smooth variety, and $Z\subseteq Y\subseteq X$ two smooth subvarieties.
Then there is a short exact sequence
\[
    0\longrightarrow \con YX\rvert_Z \longrightarrow \con ZX \longrightarrow \con ZY \longrightarrow 0.
\]

We now compute the conormal sheaf in several fundamental settings.

\begin{lem}\label{lemma: conormal sheaf of a linear subspace}
    Let $X\subseteq\PP n$ be a linear subspace of codimension $c$.
    Then
    \[
        \con{X}{\PP n}\cong \OC_{X}(-1)^{\oplus c}.
    \]
\end{lem}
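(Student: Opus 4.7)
The plan is to proceed by induction on the codimension $c$.

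For the base case $c=1$, the subvariety $X$ is a hyperplane, so its ideal sheaf $\IC\subseteq\OC_{\PP n}$ is isomorphic to $\OC_{\PP n}(-1)$. Then $\IC^2\cong\OC_{\PP n}(-2)$, and restricting $\IC/\IC^2$ to $X$ yields $\con{X}{\PP n}\cong\OC_X(-1)$.

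For the inductive step, assume $c\ge 2$ and the result holds in codimension $c-1$. Since $X$ is a linear subspace, I can choose a linear subspace $Y\subseteq\PP n$ of codimension $c-1$ containing $X$ (e.g., defined by any $c-1$ of the linear forms cutting out $X$). Then $X$ is a hyperplane in $Y\cong\PP{n-c+1}$, so $\con{X}{Y}\cong\OC_X(-1)$ by the base case applied inside $Y$, and by induction $\con{Y}{\PP n}\rvert_X\cong\OC_X(-1)^{\oplus(c-1)}$. Plugging these into the conormal exact sequence
\[
    0\longrightarrow\con{Y}{\PP n}\rvert_X\longrightarrow\con{X}{\PP n}\longrightarrow\con{X}{Y}\longrightarrow 0
\]
recalled just before the lemma, I obtain a short exact sequence
\[
    0\longrightarrow\OC_X(-1)^{\oplus(c-1)}\longrightarrow\con{X}{\PP n}\longrightarrow\OC_X(-1)\longrightarrow 0.
\]

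To finish, I would argue this sequence splits. The obstruction lies in
\[
    \Ext^1\bigl(\OC_X(-1),\OC_X(-1)^{\oplus(c-1)}\bigr)\cong H^1(X,\OC_X)^{\oplus(c-1)},
\]
which vanishes because $X\cong\PP{n-c}$ is a projective space (or a point, or empty, in which case the statement is trivial). Hence $\con{X}{\PP n}\cong\OC_X(-1)^{\oplus c}$, completing the induction.

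There is essentially no hard step here; the only thing to be careful about is the splitting of the extension, but this is immediate from $H^1(\PP m,\OC)=0$. An alternative direct route would be to write $X$ as the zero locus of a regular sequence of $c$ linear forms and observe that the resulting surjection $\OC_X(-1)^{\oplus c}\twoheadrightarrow\con{X}{\PP n}$ is a surjection of locally free sheaves of the same rank $c$, hence an isomorphism; but the inductive approach above is the cleanest.
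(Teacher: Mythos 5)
Your proof is correct and follows essentially the same route as the paper: induction on the codimension via the conormal exact sequence for $X\subseteq Y\subseteq\PP n$ with $Y$ a linear subspace of codimension $c-1$, and splitting the extension using $\Ext^1(\OC_X(-1),\OC_X(-1)^{\oplus(c-1)})\cong H^1(X,\OC_X)^{\oplus(c-1)}=0$. The only difference is cosmetic (you spell out the base case and note an alternative direct argument, which the paper omits).
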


\begin{proof}
    We proceed by induction on $c$.
    The cases $c=0,1$ are immediate.

    Assume $c\ge2$ and let $Y\subseteq\PP n$ be a linear subspace of codimension $c-1$ containing $X$.
    Consider the exact sequence
    \begin{equation}
        \label{equation: exact sequence of conormal sheaves of two linear subspaces}
        0\longrightarrow \con Y{\PP n}\rvert_X \longrightarrow\con X{\PP n}\longrightarrow \con XY\longrightarrow 0.
    \end{equation}
    By the induction hypothesis, $\con Y{\PP n}\cong \OC_Y(-1)^{\oplus(c-1)}$ and $\con XY\cong\OC_X(-1)$.
    Since $\Ext^1(\con XY, \con Y{\PP n}\rvert_X)\cong H^1(X,\OC_X)^{\oplus(c-1)}=0$, the sequence \cref{equation: exact sequence of conormal sheaves of two linear subspaces} splits.
    Thus,
    \[
        \con{X}{\PP n}\cong \OC_{X}(-1)^{\oplus c}. \qedhere
    \]
\end{proof}


\begin{lem}\label{lemma: conormal sheaf of a fiber in a blowup}
    Let $X'$ be a smooth variety, $A'\subsetneq X'$ a smooth subvariety, $\pi\colon X\to X'$ the blowup along $A'$ with exceptional divisor $E$,
    and let $F$ be a fiber of $\pi\rvert_E\colon E\to A'$.
    Then
    \[
        \con FX\cong \OC_F(1)\oplus\OC_F^{\oplus\dim A'}.
    \]
\end{lem}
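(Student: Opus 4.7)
The plan is to imitate the strategy used in \cref{lemma: conormal sheaf of a linear subspace}: write down the conormal exact sequence for $F\subseteq E\subseteq X$, identify the outer terms, and show the sequence splits via a vanishing $\Ext^1$.

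First I would note that since $A'$ is smooth of codimension $c\coloneq\codim_{X'}A'$, the exceptional divisor $E=\PB(\con{A'}{X'})$ is smooth and the fiber $F\cong\PP{c-1}$. I would then write the standard exact sequence
\[
    0\longrightarrow \con{E}{X}\rvert_F\longrightarrow\con{F}{X}\longrightarrow\con{F}{E}\longrightarrow 0.
\]
For the left term, $E$ is a Cartier divisor on $X$, so $\con{E}{X}\cong\OC_X(-E)\rvert_E$. Restricting further to $F$, the classical formula $\OC_X(E)\rvert_F\cong\OC_F(-1)$ for a fiber of a blowup along a smooth center gives
\[
    \con{E}{X}\rvert_F\cong\OC_F(1).
\]

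For the right term, since $\pi\rvert_E\colon E\to A'$ is a smooth morphism (a projective bundle over a smooth base), the fiber $F$ has trivial normal bundle, namely the pullback of the tangent space at the base point. Equivalently, the relative conormal sheaf satisfies $\Omega^1_{E/A'}\rvert_F\cong\Omega^1_F$, and the conormal sequence for $F\subseteq E$ pulled back from the conormal sequence of $E/A'$ yields
\[
    \con{F}{E}\cong\OC_F^{\oplus\dim A'}.
\]

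It then remains to show that the extension
\[
    0\longrightarrow \OC_F(1)\longrightarrow\con{F}{X}\longrightarrow \OC_F^{\oplus\dim A'}\longrightarrow 0
\]
splits. This follows from
\[
    \Ext^1\bigl(\OC_F^{\oplus\dim A'},\OC_F(1)\bigr)\cong H^1\bigl(\PP{c-1},\OC(1)\bigr)^{\oplus\dim A'}=0,
\]
giving the claimed decomposition. The only delicate point is fixing the sign convention so that $-E\rvert_F$ is indeed $\OC_F(1)$ rather than $\OC_F(-1)$, but otherwise the argument is routine once the two outer terms are correctly identified.
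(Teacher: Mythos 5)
Your proof is correct and follows essentially the same route as the paper: both use the conormal exact sequence for $F\subseteq E\subseteq X$ and identify the outer terms as $\con EX\rvert_F\cong\OC_F(1)$ (via $\OC_X(E)\rvert_F\cong\OC_F(-1)$) and $\con FE\cong\OC_F^{\oplus\dim A'}$ (via smoothness/flatness of $E\to A'$). The only divergence is in how the splitting is obtained — you invoke $\Ext^1\bigl(\OC_F^{\oplus\dim A'},\OC_F(1)\bigr)\cong H^1\bigl(\PP{c-1},\OC(1)\bigr)^{\oplus\dim A'}=0$, which is valid, whereas the paper produces an explicit splitting by comparing the sequence with the pullback of the (automatically split) conormal sequence of the point $x\in A'\subseteq X'$; both arguments are sound.
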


\begin{proof}
    Let $x$ be the point $\pi(F)$.
    Consider the following diagram with exact rows.
    \[
    \begin{tikzcd}
        0 \rar
            & \pi^*(\con {A'}{X'}\rvert_x) \rar \arrow[d]
            & \pi^*\con x{X'} \rar \arrow[d]
            & \pi^*\con x{A'} \rar \arrow[d]
            & 0\\
        0 \rar
            & \con EX\rvert_F \rar
            & \con FX \rar
            & \con FE \rar
            & 0.
    \end{tikzcd}
    \]
    The first row is the pullback of a sequence supported at $x$, hence split exact.
    Since $\pi\rvert_E\colon E\to A'$ is flat, the third vertical map $\pi^*\con x{A'}\to \con FE$ is an isomorphism.
    Thus, the second row also splits, and hence
    \[
        \con FX\cong\con EX\rvert_F\oplus\con FE
            \cong \OC_F(1)\oplus \OC_F^{\oplus\dim A'}. \qedhere
    \]
\end{proof}


\begin{lem}\label{lemma: restrict of conormal sheaf to a fiber in blowup}
    Let $X'$ be a smooth variety, and $A',B' \subseteq X'$ smooth subvarieties of codimension $a$, $b$, respectively.
    Assume that
    \begin{itemize}
        \item $A'\subsetneq X'$,
        \item $A'\nsupseteq B'$, and
        \item the scheme-theoretic intersection $A'\cap B'$ is irreducible, smooth, and of codimension $a+b-c$ in $X'$ with $c\ge0$.
    \end{itemize}
    Let $\pi\colon X\to X'$ be the blowup along $A'$.
    Let $B$ be the strict transform $\pi^{-1}_*B'$ of $B'$, and let $B_0$ be a fiber of $\pi\rvert_{B}\colon B\to B'$.
    Then
    \[
        \con{B}X\rvert_{B_0}\cong \OC_{B_0}^{\oplus(b-c)}\oplus \OC_{B_0}(-1)^{\oplus c}.
    \]
\end{lem}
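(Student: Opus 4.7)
The plan is to reduce, via transversality of $B$ and $E$ in $X$, to computing $\con{E\cap B}{E}|_{B_0}$, and then to perform this computation using the filtration $E\cap B\subseteq E|_{Z'}\subseteq E$, where $Z'\coloneq A'\cap B'$.

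First, I would establish that $\pi|_B\colon B\to B'$ is the blowup of $B'$ along $Z'$: the restricted ideal $I_{A'}\cdot\OC_{B'}$ coincides with the ideal of $Z'$ in $B'$, and $Z'\hookrightarrow B'$ is a regular embedding of codimension $a-c$, so the strict transform agrees with $\Bl_{Z'}B'$. Consequently $B$ is smooth, the exceptional divisor is $E\cap B=E|_B\cong\mathbb{P}_{Z'}(\con{Z'}{B'})$, and the fiber $B_0$ over the point $y\in Z'$ is $\mathbb{P}^{a-c-1}$. Since $B\nsubseteq E$, the Cartier divisor $E$ pulls back to a Cartier divisor on $B$, giving Tor-independence of $B,E\hookrightarrow X$, and hence the canonical isomorphism $\con{B}{X}|_{E\cap B}\cong\con{E\cap B}{E}$. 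Restricting to $B_0\subseteq E\cap B$ reduces the problem to computing $\con{E\cap B}{E}|_{B_0}$.

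For this, I would factor $E\cap B\hookrightarrow E$ through $E|_{Z'}\coloneq\mathbb{P}_{Z'}(\con{A'}{X'}|_{Z'})$, the restriction of the projective bundle $E\to A'$ over $Z'$. Flat base change along $E\to A'$ gives that $\con{E|_{Z'}}{E}$ is the pullback of $\con{Z'}{A'}$, which restricts to $\OC_{B_0}^{\oplus(b-c)}$ on the fiber $B_0$. For the other conormal piece, composing the conormal sequences for $Z'\subseteq A'\subseteq X'$ and $Z'\subseteq B'\subseteq X'$ yields a natural surjection $\con{A'}{X'}|_{Z'}\twoheadrightarrow\con{Z'}{B'}$, whose kernel $(\con{A'}{X'}|_{Z'})\cap(\con{B'}{X'}|_{Z'})\subset\con{Z'}{X'}$ is a rank-$c$ subbundle. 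This surjection realizes $E\cap B=\mathbb{P}_{Z'}(\con{Z'}{B'})$ as a sub-projective bundle of $E|_{Z'}$, and applying \cref{lemma: conormal sheaf of a linear subspace} fiberwise gives $\con{E\cap B}{E|_{Z'}}|_{B_0}\cong\OC_{B_0}(-1)^{\oplus c}$. The conormal exact sequence for the chain $E\cap B\subseteq E|_{Z'}\subseteq E$ restricted to $B_0$ then reads
\begin{equation*}
    0\longrightarrow\OC_{B_0}^{\oplus(b-c)}\longrightarrow\con{E\cap B}{E}|_{B_0}\longrightarrow\OC_{B_0}(-1)^{\oplus c}\longrightarrow 0,
\end{equation*}
which splits since $\Ext^1(\OC_{B_0}(-1),\OC_{B_0})=H^1(\mathbb{P}^{a-c-1},\OC(1))=0$ (trivially so when $a-c=1$, in which case $B_0$ is a point).

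The main technical hurdle I anticipate is the identification of $E\cap B$ as the sub-projective bundle of $E|_{Z'}$ associated to the surjection $\con{A'}{X'}|_{Z'}\twoheadrightarrow\con{Z'}{B'}$; this requires tracing through the blowup construction in the presence of the excess intersection $c>0$, rather than working purely in local coordinates where the identification would be obscured by chart-dependent trivializations.
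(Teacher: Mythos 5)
Your proof is correct, but it follows a genuinely different route from the paper's. The paper never leaves the fiber: it computes $\con{B_0}{X}$ twice, once through the chain $B_0\subseteq X_0\subseteq X$ (using \cref{lemma: conormal sheaf of a fiber in a blowup} for $\con{X_0}{X}$ and \cref{lemma: conormal sheaf of a linear subspace} for $\con{B_0}{X_0}$) and once through $B_0\subseteq B\subseteq X$, and then reads off $\con BX\rvert_{B_0}$ as the kernel of the resulting surjection of split bundles on $B_0\cong\PP{a-c-1}$. You instead reduce to the exceptional divisor via the transversality isomorphism $\con BX\rvert_{E\cap B}\cong\con{E\cap B}{E}$ and compute along the chain $E\cap B\subseteq E\rvert_{Z'}\subseteq E$ of projective bundles over $Z'$, identifying the two graded pieces as the pullback of $\con{Z'}{A'}$ and the relative $\OC(-1)^{\oplus c}$ of the linear subbundle $\PB_{Z'}(\con{Z'}{B'})\subseteq\PB_{Z'}(\con{A'}{X'}\rvert_{Z'})$. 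The trade-off: your argument requires the identification of $E\cap B$ as the sub-projective bundle attached to the surjection $\con{A'}{X'}\rvert_{Z'}\twoheadrightarrow\con{Z'}{B'}$ (which does hold, since $\mathrm{Sym}(\con{Z'}{B'})$ is the associated graded of $I_{Z'/B'}=I_{A'}\cdot\OC_{B'}$ and the degree-one piece of the graded surjection from $\mathrm{Sym}(\con{A'}{X'}\rvert_{Z'})$ is exactly that map), whereas the paper's approach stays entirely fiberwise and avoids this. In exchange, you actually prove the stronger statement that $\con BX\rvert_{E\cap B}$ is globally an extension of a rank-$c$ twist of the relative $\OC(-1)$ by the pullback of $\con{Z'}{A'}$, you make the source of the $\OC_{B_0}(-1)^{\oplus c}$ summand more conceptual, and your final splitting step is a clean $\Ext^1$ vanishing between the two graded pieces rather than an identification of the kernel of a surjection between direct sums of line bundles.
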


\begin{proof}
    Let $Z'\coloneq A'\cap B'$, and $X_0$ be the fiber of $\pi\colon X\to X'$ that contains $B_0$.
    We may assume that the fiber $B_0$ lies over a point of $Z'$, as the other case is clear.
    By \cref{lemma: conormal sheaf of a fiber in a blowup}, we have
    \[
        \con {B_0}{B}\cong \OC_{B_0}(1)\oplus \OC_{B_0}^{\oplus\dim Z'}\quad\text{and}
        \quad
        \con{X_0}X\cong \OC_{X_0}(1)\oplus\OC_{X_0}^{\oplus\dim A'}.
    \]
    Since $B_0\subseteq X_0$ is a linear subspace of codimension $c$, \cref{lemma: conormal sheaf of a linear subspace} implies that 
    \[
        \con{B_0}{X_0}\cong\OC_{B_0}(-1)^{\oplus c}.
    \]
    We have exact sequences
    \begin{gather*}
        0 \longrightarrow \con{X_0}X\rvert_{B_0} \longrightarrow \con{B_0}X \longrightarrow \con {B_0}{X_0} \longrightarrow 0,\\
        0\longrightarrow \con{B}X\rvert_{B_0} \longrightarrow \con {B_0}X \longrightarrow \con {B_0}{B} \longrightarrow 0.
    \end{gather*}
    The first sequence splits since
    \[
        \Ext^1(\con{B_0}{X_0}, \con{X_0}X\rvert_{B_0})
        \cong H^1(B_0, \OC_{B_0}(2)\oplus \OC_{B_0}(1)^{\oplus\dim A'})^{\oplus c}
        =0.
    \]
    Thus, the second exact sequence becomes
    \[
        0\longrightarrow \con{B}X\rvert_{B_0} \longrightarrow \OC_{B_0}(1)\oplus\OC_{B_0}^{\oplus\dim A'}\oplus\OC_{B_0}(-1)^{\oplus c} \longrightarrow \OC_{B_0}(1)\oplus\OC_{B_0}^{\oplus\dim Z'} \longrightarrow 0.
    \]
    Therefore, we conclude that
    \[
        \con{B}{X}\rvert_{B_0}
        \cong\OC_{B_0}^{\oplus(\dim A'-\dim Z')}\oplus\OC_{B_0}(-1)^{\oplus c}
        =\OC_{B_0}^{\oplus(b-c)}\oplus\OC_{B_0}(-1)^{\oplus c}. \qedhere
    \]
\end{proof}

\subsection{Nefness of divisors}

We recall a criterion for nefness of divisors due to Tsukioka
\cite{tsukioka_2023_some_examples_of_log_fano_structures_on_blowups_along_subvarieties_in_products_of_two_projective_spaces}.
This result will play a central role in \cref{section: nefness of divisors on blowups}.

\begin{lem}
    [{\cite[Lemma 3]{tsukioka_2023_some_examples_of_log_fano_structures_on_blowups_along_subvarieties_in_products_of_two_projective_spaces}}]
    \label{lemma: nefness of divisor}
    Let $X$ be a normal projective variety, and $D$ an $\RB$-Cartier divisor on $X$.
    Assume that there exists a chain of normal subvarieties
    \[
        X=X_0\supseteq X_1\supseteq\dotsb\supseteq X_m
    \]
    such that
    \begin{itemize}
        \item $X_{i+1}$ is a $\QB$-Cartier prime divisor on $X_i$ for $0\le i<m$,
        \item $D\rvert_{X_i}-X_{i+1}$ is nef for $0\le i<m$, and
        \item $D\rvert_{X_m}$ is nef.
    \end{itemize}
    Then $D$ is nef.
\end{lem}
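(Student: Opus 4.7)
The plan is to proceed by induction on the length $m$ of the chain. The base case $m=0$ is immediate, since the third hypothesis directly asserts that $D = D|_{X_0}$ is nef.

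For the inductive step, I would first apply the induction hypothesis to the shorter chain $X_1 \supseteq X_2 \supseteq \dotsb \supseteq X_m$ equipped with the restricted divisor $D|_{X_1}$. All three bullet conditions transfer verbatim: $X_{i+1}$ is still a $\QB$-Cartier prime divisor on $X_i$ for $1 \le i < m$, the restrictions $(D|_{X_1})|_{X_i} - X_{i+1} = D|_{X_i} - X_{i+1}$ are nef, and $(D|_{X_1})|_{X_m} = D|_{X_m}$ is nef. Hence by induction $D|_{X_1}$ is nef on $X_1$.

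Then I would take an arbitrary irreducible curve $C \subseteq X$ and split into two cases. If $C \subseteq X_1$, then by the projection formula (or simply by definition of restriction) $D.C = (D|_{X_1}).C \ge 0$ by what was just established. If $C \not\subseteq X_1$, then since $X_1$ is a $\QB$-Cartier effective prime divisor meeting $C$ properly, the intersection number $X_1.C$ is well-defined and nonnegative; combined with the nefness of $D - X_1 = D|_{X_0} - X_1$, this yields
\[
    D.C = (D - X_1).C + X_1.C \ge 0.
\]

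There is no genuine obstacle here; the argument is a clean induction, and the interesting point is really just organizing it correctly. The two small items to handle with care are (i) verifying that restricting $D$ to the $\QB$-Cartier prime divisor $X_1$ preserves all the hypotheses on the shorter chain (it does, as above), and (ii) ensuring that the intersection number $X_1.C$ makes sense on the normal variety $X$ and is nonnegative whenever $C \not\subseteq X_1$, which is precisely why the $\QB$-Cartier assumption on the $X_{i+1}$ is imposed.
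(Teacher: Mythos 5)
Your proof is correct. Note that the paper itself gives no proof of this lemma --- it is quoted verbatim from Tsukioka's work --- and your induction on the length of the chain, splitting a curve $C$ according to whether $C\subseteq X_1$ (recurse) or $C\nsubseteq X_1$ (use $D.C=(D-X_1).C+X_1.C\ge0$, where the $\QB$-Cartier hypothesis makes $X_1.C$ well-defined and nonnegative), is exactly the standard argument behind the cited result.
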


\section{Nefness of divisors on blowups}
\label{section: nefness of divisors on blowups}

In this section, we establish several lemmas concerning the nefness of divisors on smooth blowups, and prove \cref{introthm: nefness of H-E-F}.

\begin{prop}\label{lemma: nefness of H-E}
    Let $X'$ be a smooth projective variety, $A'\subsetneq X'$ a smooth subvariety, $\pi\colon X\to X'$ the blowup along $A'$ with exceptional divisor $E$, and let $H'$ be an $\RB$-divisor on $X'$.
    Assume that there exists a chain of smooth subvarieties
    \[
        X'=X'_0\supseteq X'_1\supseteq\dotsb\supseteq X'_a=A'
    \]
    such that, for $0\le i<a$,
    \begin{itemize}
        \item $X'_{i+1}$ is a prime divisor on $X'_i$, and
        \item $H'\rvert_{X'_i}-X'_{i+1}$ is nef.
    \end{itemize}
    Then the divisor $\pi^*H'-E$ on $X$ is nef.
\end{prop}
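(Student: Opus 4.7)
The plan is to apply Tsukioka's criterion (\cref{lemma: nefness of divisor}) to $D := \pi^* H' - E$, using a chain built from the strict transforms of the given subvarieties. For $0 \le i \le a-1$, let $Y_i \subseteq X$ denote the strict transform of $X'_i$ under $\pi$. Since each $X'_i$ is smooth and contains the blowup center $A' = X'_a$, each $Y_i$ is smooth, and the restriction $\pi_i := \pi|_{Y_i} \colon Y_i \to X'_i$ is itself the blowup of $X'_i$ along $A'$; in particular, its exceptional divisor equals $E \cap Y_i = E|_{Y_i}$. This also implies that $Y_{i+1}$ is a smooth prime divisor on $Y_i$ for $0 \le i \le a-2$, so the chain $X = Y_0 \supseteq Y_1 \supseteq \dotsb \supseteq Y_{a-1}$ satisfies the structural hypotheses of \cref{lemma: nefness of divisor}.

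Next, I would establish the key telescoping identity. For $0 \le i \le a-2$, since $X'_{i+1}$ is a smooth divisor on $X'_i$ of multiplicity one along $A'$, one has
\[
    \pi_i^* X'_{i+1} = Y_{i+1} + E|_{Y_i}.
\]
Combining this with $D|_{Y_i} = \pi_i^*(H'|_{X'_i}) - E|_{Y_i}$ yields
\[
    D|_{Y_i} - Y_{i+1} = \pi_i^*\bigl(H'|_{X'_i} - X'_{i+1}\bigr),
\]
which is nef as the pullback of a nef divisor.

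For the terminal step, note that $A' = X'_a$ is a smooth prime divisor on $X'_{a-1}$ by hypothesis, hence Cartier. Consequently $\pi_{a-1}\colon Y_{a-1} \to X'_{a-1}$ is an isomorphism, and under this identification $E|_{Y_{a-1}}$ corresponds to $X'_a$. Therefore
\[
    D|_{Y_{a-1}} = H'|_{X'_{a-1}} - X'_a,
\]
which is nef by the case $i = a-1$ of the hypothesis. Applying \cref{lemma: nefness of divisor} to the chain $Y_0 \supseteq \dotsb \supseteq Y_{a-1}$ then yields the nefness of $D$ on $X$.

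The main (modest) obstacle is the careful verification of the identity $\pi_i^* X'_{i+1} = Y_{i+1} + E|_{Y_i}$ with multiplicity one, and the identification $E|_{Y_{a-1}} \cong X'_a$ in the final step. Both are standard and can be checked by a local computation in affine charts of the blowup, using that $X'_{i+1}$ meets $A'$ transversally within $X'_i$ (which follows from $X'_{i+1}$ being smooth and $X'_a = A' \subseteq X'_{i+1}$).
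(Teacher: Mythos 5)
Your proposal is correct and follows essentially the same route as the paper's proof: both build the chain of strict transforms $Y_i = \pi^{-1}_* X'_i$, use the identity $\pi_i^* X'_{i+1} = Y_{i+1} + E\rvert_{Y_i}$ to express $D\rvert_{Y_i} - Y_{i+1}$ as a pullback of a nef divisor, handle the last step via the isomorphism $Y_{a-1}\cong X'_{a-1}$ identifying $E\rvert_{Y_{a-1}}$ with $X'_a$, and conclude with \cref{lemma: nefness of divisor}.
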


\begin{proof}
    Define a chain of subvarieties $X=X_0\supseteq X_1\supseteq\dotsb\supseteq X_{a-1}$ by
    \[
        X_i\coloneq\pi^{-1}_*X'_i.
    \]
    By \cref{lemma: nefness of divisor}, it suffices to show that the divisor $(H-E)\rvert_{X_i}-X_{i+1}$ is nef for $0\le i<a-1$, and that $(H-E)\rvert_{X_{a-1}}$ is nef, where $H\coloneq\pi^*H'$.
    
    Let $0\le i<a-1$.
    The restriction $\pi_i\coloneq\pi\rvert_{X_i}\colon X_i\to X'_i$ is the blowup along $A'$ with exceptional divisor $E_i\coloneq E\rvert_{X_i}$, and the prime divisor $X'_{i+1}$ on $X'_i$ contains the center $A'$.
    Thus, $\pi_i^*X'_{i+1}=X_{i+1}+E_i$, and
    \[
        (H-E)\rvert_{X_i}-X_{i+1}=\pi_i^*(H'_i-X'_{i+1}),
    \]
    where $H'_i\coloneq H'\rvert_{X'_i}$.
    This shows that $(H-E)\rvert_{X_i}-X_{i+1}$ is nef for $0\le i<a-1$.
    
    Moreover, $\pi_{a-1}\colon X_{a-1}\to X'_{a-1}$ is an isomorphism identifying the divisor $E_{a-1}$ with $A'=X'_a$.
    Therefore,
    \[
        (H-E)\rvert_{X_{a-1}}
        =\pi_{a-1}^*(H'_{a-1}-X'_a),
    \]
    which implies that $(H-E)\rvert_{X_{a-1}}$ is nef.
\end{proof}

We state the following well-known fact, which follows immediately from \cref{lemma: nefness of H-E} and will be used in \cref{section: construction}.

\begin{lem}
    \label{lemma: blowup of projective space along linear subspace}
    Let $L\subsetneq \PP n$ be a linear subspace, $\pi\colon X\to\PP n$ the blowup along $L$ with exceptional divisor $E$, and let $H'$ be a hyperplane divisor on $\PP n$.
    Then $\pi^*H'-E$ is nef.
\end{lem}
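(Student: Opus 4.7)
The plan is to derive this directly from \cref{lemma: nefness of H-E} by constructing an appropriate chain of smooth subvarieties interpolating between $\PP n$ and $L$. Since a linear subspace of codimension $a$ in $\PP n$ can be expressed as the complete intersection of $a$ hyperplanes in general position, the natural candidate for the chain is a flag of linear subspaces.

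More concretely, I would choose linear subspaces
\[
    \PP n = X'_0 \supsetneq X'_1 \supsetneq \dotsb \supsetneq X'_a = L,
\]
with $\codim_{\PP n} X'_i = i$, so that each $X'_i \cong \PP{n-i}$ and $X'_{i+1}$ is a hyperplane in $X'_i$. Such a flag exists: one simply chooses $a$ hyperplanes $H_1,\dotsc,H_a$ in $\PP n$ whose intersection equals $L$, and sets $X'_i = H_1 \cap \dotsb \cap H_i$. Each $X'_i$ is smooth, and the inclusion $X'_{i+1} \subseteq X'_i$ is a prime divisor.

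The key nefness computation is then straightforward: the restriction $H'\rvert_{X'_i}$ is a hyperplane class on $X'_i \cong \PP{n-i}$, and $X'_{i+1}$ is itself a hyperplane in $X'_i$. Hence
\[
    H'\rvert_{X'_i} - X'_{i+1} \sim_{\mathrm{lin}} 0,
\]
which is (trivially) nef. All hypotheses of \cref{lemma: nefness of H-E} are therefore satisfied, and the conclusion that $\pi^*H' - E$ is nef follows immediately.

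I do not expect any genuine obstacle here; the content is essentially that the flag of coordinate subspaces through $L$ realizes $L$ as the vanishing locus of $a$ hyperplane sections, one at each stage, and the restricted divisor $H' - (\text{hyperplane})$ is trivial. The only minor point to verify is the existence of the flag, which is an elementary linear algebra statement.
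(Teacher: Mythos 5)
Your proof is correct and follows exactly the paper's argument: choose a flag of linear subspaces of codimension one at each step from $\PP n$ down to $L$, observe that $H'\rvert_{X'_i}-X'_{i+1}\sim0$ is nef, and apply \cref{lemma: nefness of H-E}. No issues.
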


\begin{proof}
    Let $\PP n=X'=X'_0\supseteq X'_1\supseteq\dotsb\supseteq X'_c=L$ be a chain of linear subspaces of codimension one.
    Since $H'\rvert_{X'_i}-X'_{i+1}\sim0$ for $0\le i<c$, \cref{lemma: nefness of H-E} implies that $\pi^*H'-E$ is nef.
\end{proof}

\begin{rem}
    \label{rem: projective bundle structure of the blowup of a projective space along a linear subspace}
    In the notation of \cref{lemma: blowup of projective space along linear subspace}, the nef divisor $\pi^*H'-E$ is base-point-free, and defines a $\PP{n-c+1}$-bundle structure on $X$ over $\PP{c-1}$, where $c$ is the codimension of $L$ in $\PP n$ (see \cite[Corollary 9.12]{book_eisenbud_harris_2016_3264_and_all_thata_second_course_in_algebraic_geometry} for instance).
\end{rem}

We now return to the general setting and consider the nefness of divisors on two successive blowups.

\begin{prop}
    \label{lemma: nefness of H-E-F}
    Let $X''$ be a smooth projective variety, and $A'',B''\subsetneq X''$ smooth subvarieties of codimension $a$, $b$ respectively.
    Assume that
    \begin{itemize}
        \item $A''\nsupseteq B''$, and
        \item $Z''\coloneq A''\cap B''$ is irreducible, smooth, and of codimension $a+b-c$ in $X''$ for some $c\ge0$.
    \end{itemize}
    Let $\pi'\colon X'\to X''$ be the blowup along $A''$ with exceptional divisor $E'$, and let $\pi\colon X\to X'$ be the blowup along $B'\coloneq (\pi')_*^{-1}B''$ with exceptional divisor $F$.
    Set $E\coloneq \pi^*E'$ and $\varphi\coloneq\pi'\circ\pi\colon X\to X''$.
    Let $H''$ be an $\RB$-divisor on $X''$.
    Assume that there exist chains of smooth subvarieties
    \begin{align*}
        X''=X''_0\supseteq X''_1\supseteq\dotsb\supseteq X''_c&=A''_c\supseteq A''_{c+1}\supseteq\dotsb\supseteq A''_a=A'',\\
        X''_c&=B''_c\supseteq B''_{c+1}\supseteq\dotsb\supseteq B''_b=B''
    \end{align*}
    such that, setting $Z''_{i,j}\coloneq A''_i\cap B''_j$ for $c\le i\le a$ and $c\le j\le b$,
    \begin{itemize}
        \item $X''_{i+1}$ is a prime divisor on $X''_i$ for $0\le i<c$,
        \item $Z''_{i+1,j}$ and $Z''_{i,j+1}$ are smooth prime divisors on $Z''_{i,j}$ for $i,j\ge c$,
        \item $H''\rvert_{X''_i}-X''_{i+1}$ is nef on $X''_i$ for $0\le i<c$, and
        \item $H''\rvert_{Z''_{i,j}}-Z''_{i+1,j}-Z''_{i,j+1}$ is nef on $Z''_{i,j}$ for $c\le i<a$ and $c\le j<b$.
    \end{itemize}
    Then the divisor $\varphi^*H''-E-F$ on $X$ is nef.
\end{prop}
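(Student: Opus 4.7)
The plan is to apply \cref{lemma: nefness of H-E} twice in a nested fashion, reducing the two-blowup problem to iterated single-blowup cases. Writing
\[
    \varphi^* H'' - E - F = \pi^*\bigl((\pi')^* H'' - E'\bigr) - F,
\]
I will apply \cref{lemma: nefness of H-E} to the second blowup $\pi\colon X \to X'$ along $B'$ with divisor $(\pi')^* H'' - E'$. This requires a chain $X' = Y_0 \supseteq Y_1 \supseteq \cdots \supseteq Y_b = B'$ of smooth subvarieties with $Y_{i+1}$ a smooth prime divisor on $Y_i$ and $\bigl((\pi')^* H'' - E'\bigr)\big|_{Y_i} - Y_{i+1}$ nef for $0 \le i < b$.

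I will take $Y_i := (\pi')_*^{-1} X''_i$ for $0 \le i \le c$ and $Y_i := (\pi')_*^{-1} B''_i$ for $c \le i \le b$; these agree at $i = c$ since $X''_c = B''_c$. The smoothness and divisor conditions are inherited from the chain hypotheses in $X''$, using $A'' \nsupseteq B''$ to keep the strict transforms well-behaved.

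For $0 \le i < c$, the restricted morphism $\pi'|_{Y_i}\colon Y_i \to X''_i$ is the blowup along $A''$, and since $X''_{i+1}$ contains $A''$ as a smooth divisor, the total transform formula gives
\[
    \bigl((\pi')^* H'' - E'\bigr)\big|_{Y_i} - Y_{i+1} = (\pi'|_{Y_i})^*\bigl(H''|_{X''_i} - X''_{i+1}\bigr),
\]
which is nef by the first hypothesis. For $c \le i < b$, $\pi'|_{Y_i}\colon Y_i \to B''_i$ is the blowup along $Z''_{a, i} = A'' \cap B''_i$; crucially, $B''_{i+1}$ does not contain $Z''_{a, i}$ (because $Z''_{a, i+1} = B''_{i+1} \cap Z''_{a, i}$ is by hypothesis a proper divisor on $Z''_{a, i}$), so $(\pi'|_{Y_i})^* B''_{i+1} = Y_{i+1}$ and
\[
    \bigl((\pi')^* H'' - E'\bigr)\big|_{Y_i} - Y_{i+1} = (\pi'|_{Y_i})^*\bigl(H''|_{B''_i} - B''_{i+1}\bigr) - E'|_{Y_i}.
\]
This right-hand side itself has the form of \cref{lemma: nefness of H-E}, so I will apply that lemma again to $\pi'|_{Y_i}$ with the chain $B''_i = Z''_{c, i} \supseteq Z''_{c+1, i} \supseteq \cdots \supseteq Z''_{a, i}$ and divisor $H''|_{B''_i} - B''_{i+1}$; the required step nefness becomes nefness of $H''|_{Z''_{j, i}} - Z''_{j+1, i} - Z''_{j, i+1}$ on $Z''_{j, i}$, which is exactly the second hypothesis.

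The main obstacle is the pullback bookkeeping at the transition $i = c$ and at each step for $c \le i < b$: one must verify that $B''_{i+1}$ does not contain the blowup center $Z''_{a, i}$, so that no unwanted $E'$ contribution appears when pulling back, and that each $Y_i$ is smooth of the expected codimension. Once this is handled, the two nested applications of \cref{lemma: nefness of H-E} combine cleanly to yield nefness of $\varphi^* H'' - E - F$ on $X$.
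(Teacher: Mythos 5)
Your proposal is correct and follows essentially the same route as the paper: both apply \cref{lemma: nefness of H-E} to the second blowup $\pi\colon X\to X'$ along $B'$ using the chain of strict transforms $(\pi')^{-1}_*X''_0\supseteq\dotsb\supseteq(\pi')^{-1}_*X''_c=(\pi')^{-1}_*B''_c\supseteq\dotsb\supseteq(\pi')^{-1}_*B''_b=B'$, handle the two regimes $0\le i<c$ and $c\le i<b$ by the same total-transform computations (using that $X''_{i+1}\supseteq A''$ in the first case and $B''_{i+1}\nsupseteq Z''_{a,i}$ in the second), and conclude the second case by a nested application of \cref{lemma: nefness of H-E} along the chain $B''_i=Z''_{c,i}\supseteq\dotsb\supseteq Z''_{a,i}$. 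The bookkeeping points you flag as remaining obstacles are exactly the ones the paper dispatches in the same way, so no further work is needed.
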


    The situation of \cref{lemma: nefness of H-E-F} is described by the following diagram.
    \[
    \begin{tikzcd}[column sep=-5pt, row sep=-3pt]
        X''=X''_0\supseteq X''_1\supseteq\dotsb\supseteq & X''_c & = & A''_{c} & \supseteq & A''_{c+1} & \supseteq & \dotsb & \supseteq & A''_a & = & A''\\
        & \veq && \veq && \veq &&&& \veq\\
        & B''_c & = & Z''_{c,c} & \supseteq & Z''_{c+1,c} & \supseteq & \dotsb & \supseteq & Z''_{a,c}\\
        & \vsupset && \vsupset && \vsupset &&&& \vsupset\\
        & B''_{c+1} & = & Z''_{c,c+1} & \supseteq & Z''_{c+1,c+1} & \supseteq & \dotsb & \supseteq & Z''_{a,c+1}\\
        & \vsupset && \vsupset && \vsupset &&&& \vsupset\\
        & \vdotsb && \vdotsb && \vdotsb &&&& \vdotsb\\
        & \vsupset && \vsupset && \vsupset &&&& \vsupset\\
        \phantom{X''=X''_0\supseteq X''_1\supseteq{}} B''= & B''_b & = & Z''_{c,b} & \supseteq & Z''_{c+1,b} & \supseteq & \dotsb & \supseteq & Z''_{a,b} & = & Z''
    \end{tikzcd}
    \]

\begin{proof}
    Let $X'=X'_0\supseteq X'_1\supseteq\dotsb\supseteq X'_c=B'_c\supseteq B'_{c+1}\supseteq\dotsb\supseteq B'_b=B'$ be the chain defined by
    \[
        X'_i\coloneq (\pi')^{-1}_*X''_i,\quad B'_i\coloneq(\pi')^{-1}_*B''_i.
    \]
    By \cref{lemma: nefness of H-E}, it suffices to show the following:
    \begin{enumerate}
        \item\label{item: nefness on X'_i}
            $(H'-E')\rvert_{X'_i}-X'_{i+1}$ is nef on $X'_i$ for $0\le i<c$, and
        \item\label{item: nefness on B'_i}
            $(H'-E')\rvert_{B'_i}-B'_{i+1}$ is nef on $B'_i$ for $c\le i<b$,
    \end{enumerate}
    where $H'\coloneq(\pi')^*H''$.

    We first prove \cref{item: nefness on X'_i}.
    The restriction $\pi'_i\coloneq\pi'\rvert_{X'_i}\colon X'_i\to X''_i$ is the blowup along $A''$ with exceptional divisor $E'_i\coloneq E'\rvert_{X'_i}$, and the prime divisor $X''_{i+1}$ on $X''_i$ contains the center $A''$.
    This implies that $(\pi'_i)^*X''_{i+1}=X'_{i+1}+E'_i$, and thus
    \[
        (H'-E')\rvert_{X'_i}-X'_{i+1}=(\pi'_i)^*(H''_i-X''_{i+1}),
    \]
    where $H''_i\coloneq H''\rvert_{X''_i}$.
    This is nef since $H''_i-X''_{i+1}$ is nef by assumption.

    Next we prove \cref{item: nefness on B'_i}.
    In this case, $\pi'_{B,i}\coloneq\pi'\rvert_{B'_i}\colon B'_i\to B''_i$ is the blowup along $Z''_{a,i}$ with exceptional divisor $E'_{B,i}\coloneq E'\rvert_{B'_i}$, and the prime divisor $B''_{i+1}$ on $B''_i$ does not contain the center $Z''_{a,i}$, since $Z''_{a,i}\supsetneq Z''_{a,i+1}$.
    This shows that $(\pi'_{B,i})^*B''_{i+1}=B'_{i+1}$, and thus
    \begin{equation}
        \label{equation: divisor on B'_i}
        (H'-E')\rvert_{B'_i}-B'_{i+1}=(\pi'_{B,i})^*(H''_{B,i}-B''_{i+1})-E'_{B,i},
    \end{equation}
    where $H''_{B,i}\coloneq H''\rvert_{B''_i}$.
    Consider the chain $B''_i=Z''_{c,i}\supseteq Z''_{c+1,i}\supseteq\dotsb\supseteq Z''_{a,i}$, and recall that the divisor
    \[
        (H''_{B,i}-B''_{i+1})\rvert_{Z''_{j,i}}-Z''_{j+1,i}=H''\rvert_{Z''_{j,i}}-Z''_{j+1,i}-Z''_{j,i+1}
    \]
    on $Z''_{j,i}$ is assumed to be nef for $c\le j<a$.
    By \cref{lemma: nefness of H-E} again, the divisor \cref{equation: divisor on B'_i} is nef.
\end{proof}

In the remainder of this section, we assume that $X''$ is the product $X_1\times X_2$ of two varieties.
This setup will be used again in \cref{section: weak Fano 4-fold}.

\begin{lem}
    \label{lemma: nefness of H1+H2-E}
    Let $X_1$ and $X_2$ be smooth projective varieties, and $A_i\subseteq X_i$ smooth subvarieties.
    Set $X'\coloneq X_1\times X_2$ and $A'\coloneq A_1\times A_2$.
    Assume $A'\subsetneq X'$, and let $X\to X'$ be the blowup along $A'$ with exceptional divisor $E$.
    For $i=1,2$, let $\bar H_i$ be an $\RB$-divisor on $X_i$.
    Assume that
    \begin{itemize}
        \item $\bar H_i\rvert_{A_i}$ is nef for $i=1,2$, and one of the following holds:
            \begin{enumerate}
                \item\label{item: H1 nef}
                    $\bar H_1$ is nef, or
                \item\label{item: H2 nef}
                    $\bar H_2$ is nef,
            \end{enumerate}
        \item there exists a chain of smooth subvarieties
            \[
                X_i=X_{i,0}\supseteq X_{i,1}\supseteq\dotsb\supseteq X_{i,a_i}=A_i
            \]
            such that for $0\le j<a_i$,
            \begin{itemize}
                \item $X_{i,j+1}$ is a prime divisor on $X_{i,j}$, and
                \item $\bar H_i\rvert_{X_{i,j}}-X_{i,j+1}$ is nef.
            \end{itemize}
    \end{itemize}
    Then the divisor $H_1+H_2-E$ on $X$ is nef, where $H_i$ is the pullback of $\bar H_i$ by $X\to X'\to X_i$.
\end{lem}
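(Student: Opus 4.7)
The plan is to reduce the nefness of $H_1 + H_2 - E$ to Tsukioka's criterion \cref{lemma: nefness of divisor}, by constructing a chain of smooth subvarieties in $X$ obtained as strict transforms of products formed from the two given chains. By the symmetry of the hypothesis in the indices $1,2$, I may assume case \cref{item: H2 nef}, namely that $\bar H_2$ is nef.

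The construction I would use sets
\[
W_j \coloneq \pi^{-1}_*(X_{1,j}\times X_2) \quad (0\le j\le a_1),
\qquad
V_k \coloneq \pi^{-1}_*(A_1\times X_{2,k}) \quad (0\le k\le a_2-1),
\]
and considers the chain $X = W_0 \supseteq \dotsb \supseteq W_{a_1} = V_0 \supseteq \dotsb \supseteq V_{a_2-1}$, under the assumption $a_2\ge 1$; the edge case $a_2=0$ will be addressed at the end. Each $\pi\rvert_{W_j}\colon W_j\to X_{1,j}\times X_2$ is the blowup along $A_1\times A_2$, and since $X_{1,j+1}\times X_2$ is a smooth prime divisor containing this center, its pullback will be $W_{j+1}+E\rvert_{W_j}$. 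Therefore $(H_1+H_2-E)\rvert_{W_j} - W_{j+1}$ will be the pullback of the external sum $(\bar H_1\rvert_{X_{1,j}} - X_{1,j+1})\boxtimes 1 + 1\boxtimes \bar H_2$, which is nef thanks to the $X_1$-chain hypothesis together with the nefness of $\bar H_2$. An analogous computation for the $V$-chain, using that $A_1\times X_{2,k+1}$ contains the center $A_1\times A_2$, will produce the pullback of $\bar H_1\rvert_{A_1}\boxtimes 1 + 1\boxtimes (\bar H_2\rvert_{X_{2,k}}-X_{2,k+1})$, again nef by the $X_2$-chain hypothesis and $\bar H_1\rvert_{A_1}$ nef.

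The main obstacle lies in the last step: checking that $(H_1+H_2-E)\rvert_{V_{a_2-1}}$ is itself nef. The crucial observation is that $\pi\rvert_{V_{a_2-1}}$ is the blowup of $A_1\times X_{2,a_2-1}$ along $A_1\times A_2 = A_1\times X_{2,a_2}$, which is now a smooth \emph{Cartier} divisor in $A_1\times X_{2,a_2-1}$; hence this blowup is an isomorphism, and $E\rvert_{V_{a_2-1}}$ is identified with $A_1\times X_{2,a_2}$. Under this identification the restriction becomes $\bar H_1\rvert_{A_1}\boxtimes 1 + 1\boxtimes(\bar H_2\rvert_{X_{2,a_2-1}} - X_{2,a_2})$, which is nef by hypothesis. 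Finally, the edge case $a_2 = 0$ (so $A_2 = X_2$ and $X \cong (\Bl_{A_1} X_1)\times X_2$) will be handled directly: $H_1+H_2-E$ decomposes as the external sum of $\tilde\pi^*\bar H_1 - E_1$ (nef by \cref{lemma: nefness of H-E}) and $\bar H_2$ (nef), completing the argument.
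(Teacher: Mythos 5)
Your proof is correct and is essentially the paper's argument: the paper applies \cref{lemma: nefness of H-E} to the concatenated chain $X_{1,j}\times X_2$ ($0\le j\le a_1$) followed by $A_1\times X_{2,k}$ ($0\le k\le a_2$) in $X'$, which, once the proof of \cref{lemma: nefness of H-E} is unfolded, is exactly your chain of strict transforms $W_j,V_k$ together with the same pullback computations (pullback of a divisor containing the center equals strict transform plus exceptional divisor, and the final blowup along a divisor is an isomorphism). The only cosmetic difference is that you verify Tsukioka's criterion (\cref{lemma: nefness of divisor}) directly upstairs in $X$ rather than citing \cref{lemma: nefness of H-E} as a black box on the combined chain downstairs.
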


\begin{proof}
    Define a chain $X'=X'_0\supseteq X'_1\supseteq\dotsb\supseteq X'_{a_1+a_2}=A'$ by
    \begin{align*}
        X'_j&\coloneq
        \begin{cases}
            X_{1,j}\times X_2 & \text{if }0\le j\le a_1,\\
            A_1\times X_{2,j-a_1} & \text{if }a_1\le j\le a_1+a_2
        \end{cases}
        \qquad \text{if \cref{item: H2 nef} holds,}\\
        X'_j&\coloneq
        \begin{cases}
            X_{1}\times X_{2,j} & \text{if }0\le j\le a_2,\\
            X_{1,j-a_2}\times A_2 & \text{if }a_2\le j\le a_1+a_2
        \end{cases}
        \qquad \text{if \cref{item: H1 nef} holds.}
    \end{align*}
    By \cref{lemma: nefness of H-E}, it suffices to show that the divisor $(H'_1+H'_2)\rvert_{X'_j}-X'_{j+1}$ is nef for $0\le j<a_1+a_2$, where $H'_i$ is the pullback of $\bar H_i$ by the projection $X'\to X_i$.
    
    In the following, we assume that \cref{item: H2 nef} holds.
    If $0\le j<a_1$, then $H'_1\rvert_{X'_j}-X'_{j+1}$ is nef, since it is the pullback of the nef divisor $\bar H_1\rvert_{X_{1,j}}-X_{1,j+1}$ by the projection $X'_j\to X_{1,j}$; moreover, $H'_2\rvert_{X'_j}$ is nef because $\bar H_2$ is nef.
    If $a_1\le j<a_1+a_2$, then $H'_2\rvert_{X'_j}-X'_{j+1}$ is nef, since it is the pullback of the nef divisor $\bar H_2\rvert_{X_{2,j-a_1}}-X_{2,j-a_1+1}$ by the projection $X'_j\to X_{2,j-a_1}$; moreover, $H'_1\rvert_{X'_j}$ is nef because $\bar H_1\rvert_{A_1}$ is nef.
\end{proof}

\begin{lem}
    \label{lemma: nefness of H1+H2-E-F}
    Let $X_1$ and $X_2$ be smooth projective varieties, and let $A_i, B_i\subsetneq X_i$ be smooth subvarieties of codimension $a_i$ and $b_i$ respectively.
    Set $X''\coloneq X_1\times X_2$, $A''\coloneq A_1\times A_2$, and $B''\coloneq B_1\times B_2$.
    Assume that
    \begin{itemize}
        \item $A'',B''\subsetneq X''$,
        \item $A''\nsupseteq B''$, and
        \item $Z_i\coloneq A_i\cap B_i$ is irreducible, smooth, and of codimension $a_i+b_i-c_i$ in $X_i$ for some $c_i\ge0$.
    \end{itemize}
    Let $\pi'\colon X'\to X''$ be the blowup along $A''$ with exceptional divisor $E'$, and let $\pi\colon X\to X'$ be the blowup along $B'\coloneq(\pi')^{-1}_*B''$ with exceptional divisor $F$, and set $E\coloneq \pi^*E'$.
    
    Let $\bar H_i$ be an $\RB$-divisor on $X_i$.
    Assume that there exist chains of smooth subvarieties
    \begin{align*}
        X_i=X_{i,0}\supseteq X_{i,1}\supseteq\dotsb\supseteq X_{i,c_i}&=A_{i,c_i}\supseteq A_{i,c_i+1}\supseteq\dotsb\supseteq A_{i,a_i}=A_i,\\
        X_{i,c_i}&=B_{i,c_i}\supseteq B_{i,c_i+1}\supseteq\dotsb\supseteq B_{i,b_i}=B_i
    \end{align*}
    such that, setting $Z_{i,j,k}\coloneq A_{i,j}\cap B_{i,k}$ for $c_i\le j\le a_i$ and $c_i\le k\le b_i$,
    \begin{itemize}
        \item $X_{i,j+1}$ is a prime divisor on $X_{i,j}$ for $0\le j<c_i$,
        \item $Z_{i,j+1,k}$ and $Z_{i,j,k+1}$ are smooth prime divisors on $Z_{i,j,k}$ for $j,k\ge c_i$,
        \item $\bar H_i\rvert_{X_{i,j}}-X_{i,j+1}$ is nef for $0\le j<c_i$,
        \item $\bar H_i\rvert_{Z_{i,j,k}}-Z_{i,j+1,k}-Z_{i,j,k+1}$ is nef for $c_i\le j<a_i$ and $c_i\le k<b_i$,
        \item $\bar H_i\rvert_{X_{i,c_i}}$ is nef, and one of the following holds:
            \begin{enumerate}
                \item\label{item: H1 is nef}
                    $\bar H_1$ is nef, or
                \item\label{item: H2 is nef}
                    $\bar H_2$ is nef,
            \end{enumerate}
        \item $\bar H_i\rvert_{Z_{i,j,b_i}}-Z_{i,j+1,b_i}$ is nef for $c_i\le j<a_i$, and one of the following holds:
            \begin{enumerate}[resume]
                \item\label{item: H[A1j]-A[1 j+1] nef}
                    $\bar H_1\rvert_{A_{1,j}}-A_{1,j+1}$ is nef for $c_1\le j<a_1$, or
                \item\label{item: H[A2j]-A[2 j+1] nef}
                    $\bar H_2\rvert_{A_{2,j}}-A_{2,j+1}$ is nef for $c_2\le j<a_2$,
            \end{enumerate}
        \item $\bar H_i\rvert_{Z_{i,a_i,k}}-Z_{i,a_i,k+1}$ is nef for $c_i\le k<b_i$, and one of the following holds:
            \begin{enumerate}[resume]
                \item\label{item: H[B1k]-B[1 k+1] nef}
                    $\bar H_1\rvert_{B_{1,k}}-B_{1,k+1}$ is nef for $c_1\le k<b_1$, or
                \item\label{item: H[B2k]-B[2 k+1] nef}
                    $\bar H_2\rvert_{B_{2,k}}-B_{2,k+1}$ is nef for $c_2\le k<b_2$.
            \end{enumerate}
    \end{itemize}
    Then the divisor $H_1+H_2-E-F$ on $X$ is nef, where $H_i$ is the pullback of $\bar H_i$ by $X\to X''\to X_i$.
\end{lem}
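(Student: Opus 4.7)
My plan is to reduce to \cref{lemma: nefness of H-E-F} applied to $X''=X_1\times X_2$, $A''=A_1\times A_2$, $B''=B_1\times B_2$, and $H''=H''_1+H''_2$, where $H''_i$ denotes the pullback of $\bar H_i$ to $X''$. Observe that $A''$ and $B''$ have codimensions $a\coloneq a_1+a_2$ and $b\coloneq b_1+b_2$ in $X''$, and $Z''=Z_1\times Z_2$ is irreducible, smooth, of codimension $a+b-c$, where $c\coloneq c_1+c_2$. So the numerical hypotheses of \cref{lemma: nefness of H-E-F} are automatic, and everything reduces to constructing suitable chains and verifying nefness on each level.

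The chains will be built by interlacing the given chains on the two factors; which factor is traversed first is dictated by the three binary choices \ref{item: H1 is nef}/\ref{item: H2 is nef}, \ref{item: H[A1j]-A[1 j+1] nef}/\ref{item: H[A2j]-A[2 j+1] nef}, \ref{item: H[B1k]-B[1 k+1] nef}/\ref{item: H[B2k]-B[2 k+1] nef}. By symmetry it suffices to treat one representative case, say \ref{item: H2 is nef}, \ref{item: H[A1j]-A[1 j+1] nef}, \ref{item: H[B1k]-B[1 k+1] nef}; the other seven are parallel. In this case set
\begin{align*}
X''_i&\coloneq\begin{cases}X_{1,i}\times X_2 & 0\le i\le c_1,\\ X_{1,c_1}\times X_{2,i-c_1} & c_1\le i\le c,\end{cases}\\
A''_{c+j}&\coloneq\begin{cases}A_{1,c_1+j}\times A_{2,c_2} & 0\le j\le a_1-c_1,\\ A_1\times A_{2,j-(a_1-c_1)} & a_1-c_1\le j\le a-c,\end{cases}\\
B''_{c+k}&\coloneq\begin{cases}B_{1,c_1+k}\times B_{2,c_2} & 0\le k\le b_1-c_1,\\ B_1\times B_{2,k-(b_1-c_1)} & b_1-c_1\le k\le b-c.\end{cases}
\end{align*}
Each $Z''_{i,j}=A''_i\cap B''_j$ is then a product of pieces of $X_1$ and $X_2$ that factor through the given chains.

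The verification splits into the two families of nefness statements required by \cref{lemma: nefness of H-E-F}. For the codimension-one steps $X''_i\supseteq X''_{i+1}$ with $0\le i<c$, the divisor $H''\rvert_{X''_i}-X''_{i+1}$ is the pullback from the appropriate factor of $\bar H_j\rvert_{X_{j,\ast}}-X_{j,\ast+1}$ (nef by hypothesis) plus the restriction of the other $H''_{3-j}$, which is nef either because $\bar H_i\rvert_{X_{i,c_i}}$ is nef or because of \ref{item: H2 is nef}. For the quadrilateral steps $H''\rvert_{Z''_{i,j}}-Z''_{i+1,j}-Z''_{i,j+1}$ with $c\le i<a$, $c\le j<b$, the chain indices fall into four rectangular regions (before/after the switchover in both $A$- and $B$-directions); in each region $Z''_{i,j}$ is a product and the divisor splits as the pullback of one of the hypothesized nef divisors on one factor plus the restriction of the other $H''_{3-j}$ to a subvariety where it is nef (using \ref{item: H[A1j]-A[1 j+1] nef}, \ref{item: H[B1k]-B[1 k+1] nef}, or the nefness of $\bar H_i\rvert_{Z_i}$, together with the quadrilateral nefness hypothesis on the dominant factor).

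The main obstacle is purely bookkeeping: the eight symmetric cases, and within each case the division of the index range $[c,a)\times[c,b)$ into four rectangles, make the write-up lengthy. There is no new geometric input beyond \cref{lemma: nefness of H-E-F} and the routine fact that a sum of a nef pullback from one projection and a nef pullback from the other is nef on a product. The delicate points are the boundary indices where the chain switches factor (one must check both the \emph{step leaving} one factor and the \emph{step entering} the other), and ensuring that the alternative hypotheses \ref{item: H1 is nef}--\ref{item: H[B2k]-B[2 k+1] nef} are invoked precisely at the rectangles where they are needed, which is exactly what motivates the three binary choices in the statement.
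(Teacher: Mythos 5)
Your overall strategy is exactly the one the paper uses: reduce to \cref{lemma: nefness of H-E-F} for $X''=X_1\times X_2$ with $H''=H''_1+H''_2$, build the chains $X''_\bullet$, $A''_\bullet$, $B''_\bullet$ by interlacing the given chains on the two factors, and check nefness rectangle by rectangle. However, your dictionary between the binary alternatives and the interlacing order is wrong for the $A''$- and $B''$-chains, and with the chains you wrote down the verification genuinely fails. In your representative case you assume \cref{item: H2 is nef}, \cref{item: H[A1j]-A[1 j+1] nef}, \cref{item: H[B1k]-B[1 k+1] nef} (so the conditional nefness is available for $\bar H_1$ along the $A_1$- and $B_1$-chains), yet you traverse the $A_1$- and $B_1$-parts \emph{first}. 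Consider the mixed rectangle with $j$ in the first half of the $A$-range and $k$ in the second half of the $B$-range: there $Z''_{j,k}=Z_{1,j',b_1}\times B_{2,k'}$, the $j$-step moves in the first factor and is handled by the unconditional hypothesis that $\bar H_1\rvert_{Z_{1,j',b_1}}-Z_{1,j'+1,b_1}$ is nef, but the $k$-step moves in the \emph{second} factor and requires $\bar H_2\rvert_{B_{2,k'}}-B_{2,k'+1}$ to be nef --- that is \cref{item: H[B2k]-B[2 k+1] nef}, which you did not assume. Symmetrically, the other mixed rectangle requires \cref{item: H[A2j]-A[2 j+1] nef}. Since a class of the form $p_1^*D_1+p_2^*D_2$ on a product is nef only if both $D_i$ are nef (restrict to fibers of either projection), no alternative decomposition rescues these two rectangles.

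The fix is to reverse the interlacing: when \cref{item: H[B1k]-B[1 k+1] nef} holds, the $A''$-chain must traverse the $A_2$-part first (so that the conditional $A$-direction steps in the mixed rectangles land in factor $1$), and when \cref{item: H[A1j]-A[1 j+1] nef} holds, the $B''$-chain must traverse the $B_2$-part first. Note the crossing: the order of the $A''$-chain is governed by which $B$-hypothesis holds and vice versa. This is precisely how the paper sets up its chains, and it is the one non-routine point of the whole argument; your sketch glosses over it by describing the mixed rectangles as ``a nef step on one factor plus a restriction of the other $H''$,'' whereas in those rectangles \emph{both} factors undergo a codimension-one step, which is exactly why the conditional hypotheses must be positioned there. (Two smaller issues: your second-half formulas are off by $c_2$, e.g.\ $A''_{c+j}$ should involve $A_{2,\,c_2+j-(a_1-c_1)}$ since the $A_2$-chain starts at index $c_2$; and the case reduction is fine, but it is by this crossed rule, not by matching same-numbered factors, that the eight cases are organized.)
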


\begin{proof}
    Note that $A''$ and $B''$ are subvarieties of $X''$ of codimension $a\coloneq a_1+a_2$ and $b\coloneq b_1+b_2$, respectively.
    Their intersection $Z''\coloneq A''\cap B''=Z_1\times Z_2$ is irreducible, smooth, and of codimension $a+b-c$ in $X''$, where $c\coloneq c_1+c_2$.
    
    Define chains of subvarieties
    \begin{align*}
        X''=X''_0\supseteq X''_1\supseteq\dotsb\supseteq X''_c&=A''_c\supseteq A''_{c+1}\supseteq\dotsb\supseteq A''_a=A'',\\
        X''_c&=B''_c\supseteq B''_{c+1}\supseteq\dotsb\supseteq B''_b=B''
    \end{align*}
    by
    \begin{align*}
        X''_j&\coloneq
        \begin{cases}
            X_{1,j}\times X_2 & \text{if }0\le j\le c_1,\\
            X_{1,c_1}\times X_{2,j-c_1} & \text{if }c_1\le j\le c
        \end{cases}
        \qquad \text{if \cref{item: H2 is nef} holds,}\\
        X''_j&\coloneq
        \begin{cases}
            X_1\times X_{2,j} & \text{if }0\le i\le c_2,\\
            X_{1,j-c_2}\times X_{2,c_2} & \text{if }c_2\le j\le c
        \end{cases}
        \qquad \text{if \cref{item: H1 is nef} holds,}\\
        A''_j&\coloneq
        \begin{cases}
            A_{1,j-c_2}\times A_{2,c_2} & \text{if }c\le j\le a_1+c_2,\\
            A_1\times A_{2,j-a_1} & \text{if }a_1+c_2\le j\le a
        \end{cases}
        \qquad\text{if \cref{item: H[B2k]-B[2 k+1] nef} holds,}\\
        A''_j&\coloneq
        \begin{cases}
            A_{1,c_1}\times A_{2,j-c_1} & \text{if }c\le j\le a_2+c_1,\\
            A_{1,j-a_2}\times A_2 & \text{if }a_2+c_1\le j\le a
        \end{cases}
        \qquad\text{if \cref{item: H[B1k]-B[1 k+1] nef} holds,}\\
        B''_j&\coloneq
        \begin{cases}
            B_{1,j-c_2}\times B_{2,c_2} & \text{if }c\le j\le b_1+c_2,\\
            B_1\times B_{2,j-b_1} & \text{if }b_1+c_2\le j\le b
        \end{cases}
        \qquad\text{if \cref{item: H[A2j]-A[2 j+1] nef} holds,}\\
        B''_j&\coloneq
        \begin{cases}
            B_{1,c_1}\times B_{2,j-c_1} & \text{if }c\le j\le b_2+c_1,\\
            B_{1,j-b_2}\times B_2 & \text{if }b_2+c_1\le j\le b
        \end{cases}
        \qquad\text{if \cref{item: H[A1j]-A[1 j+1] nef} holds.}
    \end{align*}
    By \cref{lemma: nefness of H-E-F}, it suffices to show that
    \begin{enumerate}[label=(\alph*), ref=\alph*]
        \item\label{item: H1+H2-X[i+1] nef}
            $(H''_1+H''_2)\rvert_{X''_j}-X''_{j+1}$ is nef on $X''_j$ for $0\le j<c$,
        \item\label{item: H[ij]-Z[i+1 j]-Z[i j+1] nef}
            $(H''_1+H''_2)\rvert_{Z''_{j,k}}-Z''_{j+1,k}-Z''_{j,k+1}$ is nef on $Z''_{j,k}$ for $c\le j<a$ and $c\le k<b$,
    \end{enumerate}
    where $H''_i$ is the pullback of $\bar H_i$ by the projection $X''\to X_i$, and $Z''_{j,k}\coloneq A''_j\cap B''_k$ for $c\le j\le a$ and $c\le k\le b$.

    In the following, we assume \cref{item: H2 is nef,item: H[A2j]-A[2 j+1] nef,item: H[B2k]-B[2 k+1] nef}.
    We first prove \cref{item: H1+H2-X[i+1] nef}.
    If $0\le j<c_1$, then $H''_1\rvert_{X''_j}-X''_{j+1}$ is nef, since it is the pullback of the nef divisor $\bar H_1\rvert_{X_{1,j}}-X_{1,j+1}$ by the projection $X''_j\to X_{1,j}$; moreover, $H''_2\rvert_{X''_j}$ is nef because $\bar H_2$ is nef by \cref{item: H2 is nef}.
    If $c_1\le j<c$, then $H''_2\rvert_{X''_j}-X''_{j+1}$ is nef, since it is the pullback of the nef divisor $\bar H_2\rvert_{X_{2,j-c_1}}-X_{2,j-c_1+1}$ by the projection $X''_j\to X_{2,j-c_1}$; moreover, $H''_1\rvert_{X''_j}$ is nef because $\bar H_1\rvert_{X_{1,c_1}}$ is nef.

    It remains to prove \cref{item: H[ij]-Z[i+1 j]-Z[i j+1] nef}.
    If $c\le j<a_1+c_2$ and $c\le k<b_1+c_2$, then $H''_1\rvert_{Z''_{j,k}}-Z''_{j+1,k}-Z''_{j,k+1}$ is nef on $Z''_{j,k}=Z_{1,j-c_2,k-c_2}\times X_{2,c_2}$, since it is the pullback of the nef divisor
    \[
        \bar H_1\rvert_{Z_{1,j-c_2,k-c_2}}-Z_{1,j-c_2+1,k-c_2}-Z_{1,j-c_2,k-c_2+1}
    \]
    by the projection $Z''_{j,k}\to Z_{1,j-c_2,k-c_2}$; moreover, $H''_2\rvert_{Z''_{j,k}}$ is nef because $\bar H_2\rvert_{X_{2,c_2}}$ is nef.
    If $c\le j<a_1+c_2$ and $b_1+c_2\le k<b$, then $H''_1\rvert_{Z''_{j,k}}-Z''_{j+1,k}$ is nef on $Z''_{j,k}=Z_{1,j-c_2,b_1}\times B_{2,k-b_1}$, since
    \[
        \bar H_1\rvert_{Z_{1,j-c_2,b_1}}-Z_{1,j-c_2+1,b_1}
    \]
    is nef; moreover, $H''_2\rvert_{Z''_{j,k}}-Z''_{j,k+1}$ is nef since
    \[
        \bar H_2\rvert_{B_{2,k-b_1}}-B_{2,k-b_1+1}
    \]
    is nef by \cref{item: H[B2k]-B[2 k+1] nef}.
    If $a_1+c_2\le j<a$ and $c\le k<b_1+c_2$, then $H''_1\rvert_{Z''_{j,k}}-Z''_{j,k+1}$ is nef on $Z''_{j,k}=Z_{1,a_1,k-c_2}\times A_{2,j-a_1}$, since
    \[
        \bar H_1\rvert_{Z_{1,a_1,k-c_2}}-Z_{1,a_1,k-c_2+1}
    \]
    is nef; moreover, $H''_2\rvert_{Z''_{j,k}}-Z''_{j+1,k}$ is nef since
    \[
        \bar H_2\rvert_{A_{2,j-a_1}}-A_{2,j-a_1+1}
    \]
    is nef by \cref{item: H[A2j]-A[2 j+1] nef}.
    If $a_1+c_2\le j<a$ and $b_1+c_2\le k<b$, then $H''_2\rvert_{Z''_{j,k}}-Z''_{j+1,k}-Z''_{j,k+1}$ is nef on $Z''_{j,k}=Z_1\times Z_{2,j-a_1,k-b_1}$, since it is the pullback of the nef divisor
    \[
        \bar H_2\rvert_{Z_{2,j-a_1,k-b_1}}-Z_{2,j-a_1+1,k-b_1}-Z_{2,j-a_1,k-b_1+1}
    \]
    by the projection $Z''_{j,k}\to Z_{2,j-a_1,k-b_1}$; moreover, $H''_1\rvert_{Z''_{j,k}}$ is nef because $\bar H_1\rvert_{Z_1}$ is nef.
\end{proof}

In the special case where $A_i$ and $B_i$ satisfy inclusion relations, \cref{lemma: nefness of H1+H2-E-F} reduces to the following.

\begin{lem}
    \label{lemma: nefness of H1+H2-E-F simple}
    Let $X_1$ and $X_2$ be smooth projective varieties, and let $A_i, B_i\subseteq X_i$ be smooth subvarieties of codimension $a_i$ and $b_i$, respectively.
    Set $X''\coloneq X_1\times X_2$, $A''\coloneq A_1\times A_2$, and $B''\coloneq B_1\times B_2$.
    Assume that
    \begin{itemize}
        \item $A_1\subsetneq B_1$ and $A_2\supseteq B_2$, and
        \item $A'',B''\subsetneq X''$.
    \end{itemize}
    Let $\pi'\colon X'\to X''$ be the blowup along $A''$ with exceptional divisor $E'$.
    Let $\pi\colon X\to X'$ be the blowup along $B'\coloneq(\pi')^{-1}_*B''$ with exceptional divisor $F$, and set $E\coloneq \pi^*E'$.
    Let $\bar H_i$ be an $\RB$-divisor on $X_i$.
    Assume that
    \begin{itemize}
        \item both $\bar H_1\rvert_{B_1}$ and $\bar H_2\rvert_{A_2}$ are nef,
        \item either $\bar H_1$ or $\bar H_2$ is nef, and
        \item there are chains of smooth subvarieties
            \begin{align*}
                X_1&=X_{1,0}\supseteq X_{1,1}\supseteq\dotsb\supseteq X_{1,b_1}=B_1=A_{1,b_1}\supseteq A_{1,b_1+1}\supseteq\dotsb\supseteq A_{1,a_1}=A_1,\\
                X_2&=X_{2,0}\supseteq X_{2,1}\supseteq\dotsb\supseteq X_{2,a_2}=A_2=B_{2,a_2}\supseteq B_{2,a_2+1}\supseteq\dotsb\supseteq B_{2,b_2}=B_2
            \end{align*}
            such that
            \begin{itemize}
                \item $X_{i,j+1}$ is a prime divisor on $X_{i,j}$,
                \item $\bar H_i\rvert_{X_{i,j}}-X_{i,j+1}$ is nef,
                \item $\bar H_1\rvert_{A_{1,j}}-A_{1,j+1}$ is nef, and
                \item $\bar H_2\rvert_{B_{2,j}}-B_{2,j+1}$ is nef.
            \end{itemize}
    \end{itemize}
    Then the divisor $H_1+H_2-E-F$ on $X$ is nef, where $H_i$ is the pullback of $\bar H_i$ by $X\to X''\to X_i$.
\end{lem}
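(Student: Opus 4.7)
The plan is to reduce \cref{lemma: nefness of H1+H2-E-F simple} to the more general \cref{lemma: nefness of H1+H2-E-F} by translating the hypotheses under the inclusions $A_1\subseteq B_1$ and $A_2\supseteq B_2$. First I would compute the relevant intersections and codimension invariants. Since $A_1\subseteq B_1$, we have $Z_1=A_1\cap B_1=A_1$, which is smooth and irreducible of codimension $a_1$ in $X_1$; comparing with the formula $a_1+b_1-c_1$ forces $c_1=b_1$. Symmetrically, $A_2\supseteq B_2$ gives $Z_2=B_2$ of codimension $b_2$ in $X_2$, so $c_2=a_2$. This is the key numerical observation that makes the reduction work.

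Next I would translate the chains supplied by the present lemma into the format required by \cref{lemma: nefness of H1+H2-E-F}. For $i=1$, I relabel the initial part $X_1=X_{1,0}\supseteq\dotsb\supseteq X_{1,b_1}=B_1$ as the ``$X$''-chain of length $c_1=b_1$, identify $A_{1,c_1}=A_{1,b_1}=B_1$, and continue with the given $A$-chain $B_1=A_{1,b_1}\supseteq\dotsb\supseteq A_{1,a_1}=A_1$. The ``$B$''-chain for $i=1$ is the trivial one $B_{1,b_1}=B_1$. For $i=2$, I use $X_2\supseteq\dotsb\supseteq X_{2,a_2}=A_2$ as the ``$X$''-chain of length $c_2=a_2$, let the ``$A$''-chain be the trivial one $A_{2,a_2}=A_2$, and take the given $B$-chain $A_2=B_{2,a_2}\supseteq\dotsb\supseteq B_{2,b_2}=B_2$.

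It then remains to verify every bullet of the hypotheses of \cref{lemma: nefness of H1+H2-E-F}. The nefness of $\bar H_i|_{X_{i,j}}-X_{i,j+1}$ for $0\le j<c_i$ is directly among the given chain conditions. The nefness condition on $Z_{i,j,k}$ for $c_i\le j<a_i$ and $c_i\le k<b_i$ is vacuous in both cases: for $i=1$ the range $c_1\le k<b_1$ is empty, and for $i=2$ the range $c_2\le j<a_2$ is empty. The conditions $\bar H_i|_{X_{i,c_i}}$ nef translate precisely to the hypotheses ``$\bar H_1|_{B_1}$ and $\bar H_2|_{A_2}$ nef.'' For the ``$\bar H_i|_{Z_{i,j,b_i}}-Z_{i,j+1,b_i}$ nef'' clause, I compute $Z_{1,j,b_1}=A_{1,j}\cap B_1=A_{1,j}$ (since $A_{1,j}\subseteq A_1\subseteq B_1$), so this reduces to $\bar H_1|_{A_{1,j}}-A_{1,j+1}$ nef, which is given; for $i=2$ the range is empty. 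Symmetrically, ``$\bar H_i|_{Z_{i,a_i,k}}-Z_{i,a_i,k+1}$ nef'' becomes $\bar H_2|_{B_{2,k}}-B_{2,k+1}$ nef for $i=2$ and is vacuous for $i=1$. The remaining ``one of'' choices (the nefness of either $\bar H_1$ or $\bar H_2$, and the choices (c)--(f)) can be satisfied by taking the appropriate given branch or the vacuously empty branch, as applicable.

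The entire argument is essentially bookkeeping; the only real content is the identification $c_1=b_1$, $c_2=a_2$ and the resulting fact that several of the intersection conditions in \cref{lemma: nefness of H1+H2-E-F} become vacuous. The main obstacle is purely notational: keeping the four index ranges straight and verifying that each of the many bullet points in the statement of \cref{lemma: nefness of H1+H2-E-F} is matched by a given hypothesis or is empty. Once this dictionary is laid out, invoking \cref{lemma: nefness of H1+H2-E-F} yields nefness of $H_1+H_2-E-F$ immediately.
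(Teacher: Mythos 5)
Your reduction is exactly the argument the paper intends: Lemma \ref{lemma: nefness of H1+H2-E-F simple} is stated without proof as the specialization of \cref{lemma: nefness of H1+H2-E-F} to the case $A_1\subseteq B_1$, $A_2\supseteq B_2$, and your identification $c_1=b_1$, $c_2=a_2$ (so that $Z_{1,j,b_1}=A_{1,j}$, $Z_{2,a_2,k}=B_{2,k}$, and the mixed conditions on $Z_{i,j,k}$ become vacuous) is precisely the required bookkeeping. All hypothesis translations check out, so the proposal is correct and follows the paper's approach.
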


We conclude this section with an example illustrating how the preceding lemmas are applied.
While the result is due to Tsukioka \cite{tsukioka_2023_some_examples_of_log_fano_structures_on_blowups_along_subvarieties_in_products_of_two_projective_spaces}, we recover it here using the general framework established above.

\begin{eg}[cf.\,{\cite[Proposition 2]{tsukioka_2023_some_examples_of_log_fano_structures_on_blowups_along_subvarieties_in_products_of_two_projective_spaces}}]
    Let
    \begin{align*}
        X_1 & \coloneq \PP{n_1},
            & A_1 & \coloneq\{\mathrm{pt}.\}\subseteq X_1,
            & B_1 & \coloneq X_1,\\
        X_2 & \coloneq \PP{n_2},
            & A_2 & \coloneq L_d\subseteq X_2,
            & B_2 & \coloneq\{\mathrm{pt}.\}\subseteq A_2,
    \end{align*}
    where $A_2=L_d$ is a smooth hypersurface of degree $d$ in $X_2=\PP{n_2}$.

    Let $\pi'\colon X'\to X''$ be the blowup of $X''\coloneq X_1\times X_2$ along $A''\coloneq A_1\times A_2$ with exceptional divisor $E'$.
    Let $\pi\colon X\to X'$ be the blowup along $B'\coloneq(\pi')^{-1}_*B''$, where $B''\coloneq B_1\times B_2$, and let $F$ be its exceptional divisor.
    Set $E\coloneq \pi^*E'$.
    Let $\bar H_i\coloneq\OC_{\PP{n_i}}(1)$ and let $H_i$ be its pullback to $X$.
    
    Using \cref{lemma: nefness of H1+H2-E,lemma: nefness of H1+H2-E-F simple}, we show that $H_1+dH_2-E$ and $H_1+dH_2-E-F$ are nef.
    We construct the required chains of subvarieties
    \begin{align*}
        \PP{n_1} & =X_1=B_1=A_{1,0}\supseteq A_{1,1}\supseteq\dotsb\supseteq A_{1,n_1}=A_1=\{\mathrm{pt.}\},\\
        \PP{n_2} & =X_2=X_{2,0}\supseteq X_{2,1}=A_2=B_{2,1}\supseteq B_{2,2}\supseteq\dotsb\supseteq B_{2,n_2}=B_2=\{\mathrm{pt.}\}
    \end{align*}
    as follows.
    Let $A_{1,j}$ be a hyperplane in $A_{1,j-1}\cong\PP{n_1-j+1}$ containing the point $A_1$ for $1\le j<n_1$.
    Let $B_{2,j}\in\bigl\lvert\bar H_2\rvert_{B_{2,j-1}} \bigr\rvert$ be a smooth irreducible element containing $B_2$ for $2\le j< n_2$.
    This yields:
    \begin{itemize}
        \item $\bar H_1\rvert_{A_{1,j}}-A_{1,j+1}\sim0$ is nef for $0\le j<n_1$,
        \item $d\bar H_2-A_2\sim0$ is nef,
        \item $d\bar H_2\rvert_{B_{2,j}}-B_{2,j+1}\sim(d-1)\bar H_2\rvert_{B_{2,j}}$ is nef for $1\le j<n_2-1$, and
        \item $d\bar H_2\rvert_{B_{2,n_2-1}}-B_2$ is a divisor of degree $d^2-1$ on the curve $B_{2,n_2-1}$, hence nef.
    \end{itemize}
\end{eg}

\begin{rem}
    In fact, Tsukioka \cite[Proposition 2]{tsukioka_2023_some_examples_of_log_fano_structures_on_blowups_along_subvarieties_in_products_of_two_projective_spaces} proves the stronger statement that the nef cone of $X$ is generated by 
    \[
        H_1,\quad H_2,\quad H_1+dH_2-E,\quad H_1+dH_2-E-F.
    \]
\end{rem}

\section{General construction of varieties admitting small contractions}
\label{section: construction}

In this section, we study a sequence of blowups $X\to X'\to X''$ of a smooth variety $X''$.
Under certain geometric conditions on the centers, we show that the resulting variety $X$ admits a small contraction, thus proving \cref{introthm: existence of small contraction}.
We first fix notation and assumptions.

\begin{setup}\label{setup2 of two blowups}
    Let $X''$ be a smooth variety of dimension $\ge3$, and let $A'', B''\subseteq X''$ be smooth subvarieties of codimensions $a,b\ge2$, respectively.
    Assume that
    \begin{itemize}
        \item $A''\nsupseteq B''$,
        \item the intersection $Z''\coloneqq A''\cap B''$ is nonempty and smooth, and
        \item each irreducible component of $Z''$ has codimension $<a+b$ in $X''$.
    \end{itemize}
    Let $\pi'\colon X'\to X''$ be the blowup along $A''$ with exceptional divisor $E'$.
    Let $\pi\colon X\to X'$ be the blowup along $B'\coloneqq (\pi')^{-1}_*B''$ with exceptional divisor $F$.
    Set $E\coloneq \pi^*E'$, $W'\coloneq(\pi')^{-1}(Z'')$, $W\coloneq\pi_*^{-1}W'$, and $\varphi\coloneq\pi'\circ\pi\colon X\to X''$.
\end{setup}

The setup is summarized in the following diagram:
    \[
    \begin{tikzcd}[row sep=0pt, column sep=huge]
        X \arrow[r, "\pi=\Bl_{B'}"] &X' \arrow[r,"\pi'=\Bl_{A''}"] &X''\\
        \cup &\cup &\cup\\
        E \arrow[r,] &E' \arrow[r,"\PP{a-1}\text{-bundle}"]&A''\\
        \cup &\cup &\cup\\
        W \arrow[r, "\Bl_{W'\cap B'}"] &W' \arrow[r, "\PP{a-1}\text{-bundle}"] &Z''\\
        && \cap\\
        F \arrow[r, "\PP{b-1}\text{-bundle}"] & B' \arrow[r, "\Bl_{Z''}"] & B''   
    \end{tikzcd}
    \]

We begin by describing the structure of the fiber $X_z=\varphi^{-1}(z)$ over a point $z\in Z''$.

\begin{lem}
    \label{lemma: structure of the fiber X_z}
    In the setting of \cref{setup2 of two blowups}, let $X_z\coloneq\varphi^{-1}(z)$ be the fiber over a point $z\in Z''$.
    Suppose that the irreducible component of $Z''$ containing $z$ has codimension $a+b-c$ in $X''$, where $c\ge1$ by assumption.
    \begin{enumerate}
        \item\label{item: W_z} Let $W_z\coloneq\pi^{-1}_*W'_z$ be the strict transform of the fiber $W'_z\coloneq(\pi')^{-1}(z)$.
            Then $W_z$ is the blowup of the projective space $W'_z\cong\PP{a-1}$ along $B'_z\coloneq W'_z\cap B'$, which is a linear subspace of codimension $c$.
        \item\label{item: F_z} Let $F_z\coloneq F\cap X_z$.
            Then $F_z=\pi^{-1}(B'_z)$, which is the $\PP{b-1}$-bundle over $B'_z$:
            \[
                F_z\cong\PB_{B'_z}(\con{B'}{X'}\rvert_{B'_z}).
            \]
        \item\label{item: X_z} If $A''\subseteq B''$, then $X_z=W_z$.
            Otherwise, $X_z$ has two irreducible components: $W_z$ and $F_z$.
    \end{enumerate}
\end{lem}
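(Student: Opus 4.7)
The plan is to treat the three parts in order, relying on standard properties of blowups along smooth centers together with one local-coordinate computation to handle the main technical point.

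For part \cref{item: W_z}, I would first identify $W'_z$ as the projectivized normal bundle fiber $\PP(N_{A''/X''}|_z)\cong\PP{a-1}$, which is standard for the blowup of a smooth subvariety. Next, since $Z''$ is smooth and $B'$ is by definition the strict transform of $B''$ under the blowup along $A''$, the restriction $\pi'\rvert_{B'}\colon B'\to B''$ is the blowup of $B''$ along $Z''$. Consequently the fiber of $\pi'\rvert_{B'}$ over $z$ is $\PP(N_{Z''/B''}|_z)$, which coincides with $B'_z=W'_z\cap B'$ and has dimension $a-c-1$. To show $B'_z\hookrightarrow W'_z$ is a linear embedding, I would identify it as the projectivization of the composition
\[
    N_{Z''/B''}|_z\hookrightarrow N_{Z''/X''}|_z\twoheadrightarrow N_{A''/X''}|_z
\]
obtained from the two normal-bundle exact sequences for $Z''\subseteq B''\subseteq X''$ and $Z''\subseteq A''\subseteq X''$. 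Injectivity follows from the equality $T_zZ''=T_zA''\cap T_zB''$, which in turn follows from the assumed smoothness of $Z''$.

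To finish part \cref{item: W_z} one must identify $W_z$ with $\Bl_{B'_z}W'_z$. I would do this by a local computation: choose coordinates $(x_1,\dotsc,x_a,u_1,\dotsc,u_c,v_1,\dotsc,v_{b-c},w_\ast)$ near a point of $Z''$ so that $A''=\{x_\ast=0\}$ and $B''=\{x_1=\dotsb=x_c=v_\ast=0\}$, blow up $A''$ in a standard chart (introducing $x_i=ty_i$), then blow up the strict transform of $B''$ (introducing $y_i=sY_i$, $v_j=sV_j$). In this chart a direct computation shows that the strict transform of $W'_z$ is cut out by $V_1=\dotsb=V_{b-c}=0$, and maps to $W'_z$ via the standard affine chart of $\Bl_{\{y_1=\dotsb=y_c=0\}}\AA^{a-1}$. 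Part \cref{item: F_z} is then routine: since $F\cong\PB_{B'}(\con{B'}{X'})$, we have
\[
    F_z=F\cap\pi^{-1}(W'_z)=\pi^{-1}(B'\cap W'_z)=\pi^{-1}(B'_z)\cong\PB_{B'_z}(\con{B'}{X'}\rvert_{B'_z}).
\]

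For part \cref{item: X_z}, observe first that set-theoretically $X_z=W_z\cup F_z$: indeed $\pi$ is an isomorphism over $W'_z\setminus B'_z$, so that open set lies in $W_z$, while $\pi^{-1}(B'_z)=F_z$. Note that $A''\subseteq B''$ is equivalent to $Z''=A''$, i.e., to $c=b$. If $c=b$, then $\dim F_z=a+b-c-2=a-2<a-1=\dim W_z$, and either from the local model above (where $V_\ast$ is empty) or from the fact that the exceptional divisor of $W_z\to W'_z$ is already a $\PP{b-1}$-bundle over $B'_z$ matching $F_z$, one sees $F_z=W_z\cap F\subseteq W_z$, whence $X_z=W_z$. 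If $c<b$, then $\dim F_z=a+b-c-2\ge a-1$ while $W_z\cap F$ (the exceptional divisor of $W_z\to W'_z$, a $\PP{c-1}$-bundle over $B'_z$) has dimension only $a-2$; since $F_z$ is irreducible, this forces $F_z\not\subseteq W_z$. Conversely $W_z\not\subseteq F_z$ since $W_z$ contains points outside $F$. Hence $W_z$ and $F_z$ are distinct irreducible components.

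The main obstacle is the identification $W_z=\Bl_{B'_z}W'_z$ in part \cref{item: W_z}: when $c<b$ the intersection $W'_z\cap B'$ fails to be transverse, so one cannot cite a generic ``transverse intersection implies strict transform is a blowup'' lemma. The local-coordinate argument sketched above is needed to carry this step through.
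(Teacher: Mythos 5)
Your argument is correct and follows essentially the same route as the paper: identify $B'_z$ as the linear $\PP{a-1-c}$ fiber of the blowup $B'\to B''$ over $z$, identify $F_z$ as the restricted $\PP{b-1}$-bundle, and compare it with the exceptional $\PP{c-1}$-bundle $W_z\cap F$ to decide when $F_z\subseteq W_z$. The only difference is cosmetic: where you verify $W_z=\Bl_{B'_z}W'_z$ by local coordinates, the paper implicitly invokes the standard fact that a strict transform is the blowup along the scheme-theoretic intersection with the center, which requires no transversality once $W'_z\cap B'=B'_z$ is known to be the reduced linear subspace.
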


\begin{proof}
    We first prove \cref{item: W_z}.
    By construction, $W'_z$ is isomorphic to $\PP{a-1}$.
    Since $\pi'\rvert_{B'}\colon B'\to B''$ is the blowup along $Z''$, considering the restriction $\pi'\rvert_{E'\cap B'}\colon E'\cap B'\to Z''$ near the point $z$, we see that the fiber $B'_z$ is a projective space $\PP{a-1-c}$, which is a linear subspace of $W'_z$.

    Next we prove \cref{item: F_z}.
    Since $\pi\rvert_F\colon F\to B'$ is the projective bundle $\PB_{B'}(\con{B'}{X'})$, its restriction $F_z$ over $B'_z$ is $\PB_{B'_z}(\con{B'}{X'}\rvert_{B'_z})$.

    Finally we prove \cref{item: X_z}.
    Since $\pi\colon X\to X'$ is the blowup along $B'$, its restriction $X_z\to W'_z$ is an isomorphism over $W'_z\setminus B'_z$.
    Thus, $W_z$ is an irreducible component of $X_z$.
    The exceptional divisor of the induced blowup $W_z\to W'_z$ is $F\rvert_{W_z}=W_z\cap F_z$, which is a $\PP{c-1}$-bundle over $B'_z$.
    On the other hand, $F_z\to B'_z$ is a $\PP{b-1}$-bundle.
    Thus, $W_z\cap F_z=F_z$ (equivalently $X_z=W_z$) if and only if $b=c$, which is equivalent to the condition $A''\subseteq B''$.
    Otherwise $F_z$ is a distinct irreducible component of $X_z$.
\end{proof}

In addition to \cref{setup2 of two blowups}, we fix the following additional notation.

\begin{setup}\label{setup: curves e and f}
    Let $f$ be a line in a fiber of the $\PP{b-1}$-bundle $\pi\rvert_F\colon F\to B'$.
    Let $e'$ be a line in a fiber of the $\PP{a-1}$-bundle $\pi'\rvert_{W'}\colon W'\to Z''$ intersecting $W'\cap B'$ but not contained in it.
    Set $e\coloneqq \pi^{-1}_*e'$.
\end{setup}

We have the following intersection numbers.

\begin{lem}\label{lemma: intersection numbers on two blowups}
    In the setting of \cref{setup2 of two blowups,setup: curves e and f}, the following hold:
    \begin{gather*}
        N^1(X/X'') =\RB[E]\oplus\RB[F],\quad
            N_1(X/X'') =\RB[e]\oplus\RB[f],\\
        E.e=-1,\quad
            E.f =0,\quad
            F.e=1,\quad
            F.f =-1.
    \end{gather*}
\end{lem}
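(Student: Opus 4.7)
The plan is to verify the four pairwise intersection numbers by standard blowup formulas combined with the projection formula applied to $E = \pi^* E'$, and then deduce the rank-two statements via the blowup decomposition of Picard groups.

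For three of the intersections the computation is immediate: $F.f = -1$ is the tautological intersection of the exceptional divisor with a line in a fiber, using $\OC_X(F)\rvert_F \cong \OC_F(-1)$ under the $\PP{b-1}$-bundle $F \to B'$; $E.f = 0$ follows from the projection formula because $\pi$ contracts $f$ to a point; and for $E.e$, the map $\pi\rvert_e \colon e \to e'$ is birational since $e$ is the strict transform of the curve $e' \not\subseteq B'$, so $\pi_* e = e'$ and $E.e = E'.e'$. Because $e' \subseteq W' \subseteq E'$ and the $\PP{a-1}$-bundle $E' \to A''$ has fiber $W'_z$ over $z \in Z''$, the curve $e'$ is a line in a fiber of this bundle, whence $E'.e' = -1$ by the analogous tautological formula on $X'$.

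The main step is $F.e = 1$, which requires verifying that $e'$ meets $B'$ transversally at a single point. By part (1) of \cref{lemma: structure of the fiber X_z}, $B'_z \coloneqq W'_z \cap B'$ is a codimension-$c$ linear subspace of $W'_z \cong \PP{a-1}$, and by \cref{setup: curves e and f}, $e'$ is a line in $W'_z$ that meets $B'_z$ but is not contained in it. Since lines in projective space meet any proper linear subspace they do not lie in at a unique point, $e' \cap B' = e' \cap B'_z$ reduces to a single point $p$. Transversality in $X'$ then follows from $T_p e' \subseteq T_p W'_z$ together with $T_p e' \not\subseteq T_p B'_z = T_p W'_z \cap T_p B'$, where the latter uses that $e'$ is a line not contained in the linear subspace $B'_z$. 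A local chart computation for the blowup $\pi$ then yields $F.e = 1$. This transversality verification is the principal technical step; the remaining numbers are essentially tautological.

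Finally, iterating the standard blowup Picard decomposition for $\pi$ and $\pi'$ shows that every class in $N^1(X/X'')$ is an $\RB$-linear combination of $E$ and $F$, so $\rho(X/X'') \le 2$. The intersection matrix $\bigl(\begin{smallmatrix} -1 & 0 \\ 1 & -1 \end{smallmatrix}\bigr)$ has determinant $1$, so $\{E, F\}$ is linearly independent in $N^1(X/X'')$ and $\{e, f\}$ is linearly independent in $N_1(X/X'')$; hence both are bases of their respective spaces.
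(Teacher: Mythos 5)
Your proposal is correct and follows essentially the same route as the paper: both treat $F.e=1$ as the only nontrivial computation and reduce it, via \cref{lemma: structure of the fiber X_z}, to the fact that $e'$ is a line in $W'_z\cong\PP{a-1}$ meeting the linear subspace $B'_z$ in exactly one point, the remaining intersection numbers being tautological or projection-formula consequences of $E=\pi^*E'$. The only cosmetic difference is that you verify the multiplicity-one claim by a tangent-space argument in $X'$ (using $T_pB'_z=T_pW'_z\cap T_pB'$), whereas the paper restricts to $W_z$ and reads off $(F\rvert_{W_z}.e)_{W_z}=1$ from the blowup of $\PP{a-1}$ along a linear subspace.
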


\begin{proof}
    We prove $F.e=1$, since the other equalities are clear.
    Let $z$ be the point $\varphi(e)$.
    By construction, the curve $e$ lies in the strict transform $W_z=\pi^{-1}_* W'_z$ of the fiber $W'_z=(\pi')^{-1}(z)$.
    By \cref{lemma: structure of the fiber X_z}, $W_z$ is the blowup of the projective space $W'_z$ along the linear subspace $B'_z= W'_z\cap B'$ with exceptional divisor $F\rvert_{W_z}$.
    Since $e'=\pi_*e$ is a line in $W'_z$ intersecting $B'_z$ at a single point, we obtain
    \[
        F.e=(F\rvert_{W_z}.e)_{W_z}=1. \qedhere
    \]
\end{proof}

To prove the existence of a small contraction on $X$, we analyze the relative nef cone $\Nef(X/X'')$ and the relative cone of curves $\NEbar(X/X'')$.
A key step is verifying that the divisor $-(E+F)$ is nef over $X''$.
We begin with the following.

\begin{lem}\label{lemma: ampleness of restriction of -(E+F)}
    In the setting of \cref{setup2 of two blowups}, the restriction $-(E+F)\rvert_{X\setminus\varphi^{-1}(Z'')}$ is ample over $X''\setminus Z''$.
\end{lem}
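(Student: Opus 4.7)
The plan is to verify relative ampleness locally on the base, exploiting the fact that $A'' \cap B'' = Z''$ means the two blowup centers become disjoint away from $Z''$. Specifically, I would cover $X'' \setminus Z''$ by the two open sets
\[
    U_1 \coloneq X'' \setminus A'', \qquad U_2 \coloneq X'' \setminus B'',
\]
whose union is indeed $X'' \setminus Z''$, and show that $-(E+F)$ is $\varphi$-ample over each. Since relative ampleness is local on the base, this will suffice.

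Over $U_1$, the first blowup $\pi'$ is an isomorphism, so $E$ restricts to $0$ on $\varphi^{-1}(U_1)$. Under this isomorphism, $B' \cap (\pi')^{-1}(U_1)$ corresponds to $B'' \cap U_1$, so $\varphi|_{\varphi^{-1}(U_1)} \colon \varphi^{-1}(U_1) \to U_1$ is simply the blowup along $B'' \cap U_1$ with exceptional divisor $F|_{\varphi^{-1}(U_1)}$. Since the negative of the exceptional divisor of a smooth blowup is relatively ample over the base, $-F$ and hence $-(E+F)$ is $\varphi$-ample over $U_1$.

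Over $U_2$, I would use that $B' = (\pi')^{-1}_* B''$ lies entirely over $B''$, so $B' \cap (\pi')^{-1}(U_2) = \emptyset$. Consequently $\pi$ is an isomorphism on $(\pi')^{-1}(U_2)$, so $F$ restricts to $0$ on $\varphi^{-1}(U_2)$, and $\varphi|_{\varphi^{-1}(U_2)}$ is just the blowup of $U_2$ along $A'' \cap U_2$ with exceptional divisor $E|_{\varphi^{-1}(U_2)}$. The same standard fact about blowups shows that $-E$ and thus $-(E+F)$ is $\varphi$-ample over $U_2$.

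Combining these two local statements yields the desired conclusion. I do not expect any genuine obstacle here: the entire content of the lemma is that the two blowups decouple away from $Z''$, so the question reduces to the well-known relative ampleness of the negative of the exceptional divisor of a single smooth blowup.
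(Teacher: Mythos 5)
Your proof is correct, and it takes a slightly different but equally valid route from the paper's. The paper verifies relative ampleness \emph{fiberwise}: for each point $x\in X''\setminus Z''$ it identifies the fiber $\varphi^{-1}(x)$ explicitly (a point, a $\PP{a-1}$, or a $\PP{b-1}$), computes the restriction of $-(E+F)$ to it (getting $\OC(1)$ in the nontrivial cases), and implicitly invokes the criterion that a line bundle is relatively ample for a proper morphism if and only if it is ample on every fiber. You instead work \emph{locally on the base}, covering $X''\setminus Z''$ by $U_1=X''\setminus A''$ and $U_2=X''\setminus B''$, observing that over each open set one of the two blowups becomes an isomorphism (so that one of $E$, $F$ vanishes there), and invoking the relative ampleness of the negative of the exceptional divisor of a blowup together with the fact that relative ampleness is local on the base. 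Both arguments hinge on the same geometric point — the two centers decouple away from $Z''=A''\cap B''$ — but yours avoids identifying the fibers and substitutes one standard general fact (base-locality of relative ampleness plus $\OC(-E)$ being $\pi$-ample for a blowup) for another (the fiberwise ampleness criterion). The only small things worth making explicit in a final write-up are that $B'\cap(\pi')^{-1}(U_2)=\emptyset$ because the strict transform $B'$ maps onto $B''$, and that forming the blowup commutes with restriction to open subsets of the base, both of which you use and both of which are immediate.
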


\begin{proof}
    Let $X_x\coloneqq \varphi^{-1}(x)$ be the fiber over a point $x\in X''\setminus Z''$.
    It suffices to show that the restriction $-(E+F)\rvert_{X_x}$ is ample.
    
    If $x\notin A''\cup B''$, then $X_x$ is a point, so the assertion is trivial.

    If $x\in A''\setminus Z''$, then $X_x\cong\PP{a-1}$, $E\rvert_{X_x}\sim\OC_{\PP{a-1}}(-1)$, and $F\rvert_{X_x}=0$.
    Thus, $-(E+F)\rvert_{X_x}\sim\OC_{\PP{a-1}}(1)$, which is ample.

    Similarly, if $x\in B''\setminus Z''$, then $X_x\cong\PP{b-1}$, $E\rvert_{X_x}=0$, and $F\rvert_{X_x}\sim \OC_{\PP{b-1}}(-1)$.
    Consequently, $-(E+F)\rvert_{X_x}\sim\OC_{\PP{b-1}}(1)$, which is ample.
\end{proof}

We now describe the structure of the relative nef cone and the relative cone of curves of $X$.

\begin{prop}
    \label{proposition: structure of NE(X/Y)}
    In the setting of \cref{setup2 of two blowups,setup: curves e and f},
    \begin{align*}
        \Nef(X/X'')&=\RB^+[-E]+\RB^+[-(E+F)]\quad\text{and}\\
        \NEbar(X/X'')&=\RB^+[e]+\RB^+[f].
    \end{align*}
\end{prop}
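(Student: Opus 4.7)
My plan is to deduce both equalities simultaneously from cone duality. By \cref{lemma: intersection numbers on two blowups}, both $N^1(X/X'')$ and $N_1(X/X'')$ are two-dimensional, and the pairing between $\{-E, -(E+F)\}$ and $\{e, f\}$ is the identity matrix. Consequently, the cones $\RB^+[-E, -(E+F)]$ and $\RB^+[e, f]$ are mutually dual in the perfect pairing $N^1(X/X'') \times N_1(X/X'')$. Since $\Nef(X/X'')$ and $\NEbar(X/X'')$ are also mutually dual closed cones, it will suffice to establish the inclusions
\[
    \RB^+[e, f] \subseteq \NEbar(X/X'') \quad\text{and}\quad \RB^+[-E, -(E+F)] \subseteq \Nef(X/X''),
\]
after which the reverse inclusions follow by dualization.

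The first inclusion is immediate, since $e$ and $f$ are effective curves contained in fibers of $\varphi$. For $-E$, I would argue via the projection formula: for any curve $C$ contracted by $\varphi$, $\pi_* C$ is either zero or a curve in a fiber $\PP{a-1}$ of $\pi'$, on which $-E'$ restricts to $\OC(1)$, so $-E \cdot C = -E' \cdot \pi_* C \ge 0$.

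The main step is to show that $-(E+F)$ is $\varphi$-nef. By \cref{lemma: ampleness of restriction of -(E+F)}, I only need to verify nonnegativity on curves lying in fibers $X_z$ over points $z \in Z''$. By \cref{lemma: structure of the fiber X_z}, such a fiber decomposes into $W_z$ (always present) and $F_z$ (when $A'' \nsubseteq B''$), and nefness can be checked on each irreducible component separately. On $W_z$, identified with the blowup of $W'_z \cong \PP{a-1}$ along the linear subspace $B'_z$ of codimension $c$, a direct restriction gives $-(E+F)|_{W_z} = H_{W_z} - F|_{W_z}$, where $H_{W_z}$ is the pullback of the hyperplane class; nefness follows from \cref{lemma: blowup of projective space along linear subspace}. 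On $F_z \cong \PB_{B'_z}(\con{B'}{X'}|_{B'_z})$, the splitting $\con{B'}{X'}|_{B'_z} \cong \OC^{\oplus(b-c)} \oplus \OC(-1)^{\oplus c}$ from \cref{lemma: restrict of conormal sheaf to a fiber in blowup} together with a bundle-theoretic identification yields $-(E+F)|_{F_z} \cong \OC_{F_z}(1) \otimes (\pi|_{F_z})^*\OC_{B'_z}(1)$, whose pushforward to $B'_z \cong \PP{a-c-1}$ is the globally generated bundle $\OC(1)^{\oplus(b-c)} \oplus \OC^{\oplus c}$; hence $-(E+F)|_{F_z}$ is base-point-free, and in particular nef.

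I expect the main obstacle to be this last verification on $F_z$, where one must carefully track the restriction of $-E$ through the inclusions $B'_z \subseteq W'_z \subseteq E'$ (to recover the $\OC(1)$ contribution) and the restriction of $-F$ as the relative hyperplane class on the projective bundle $F \to B'$, keeping sign conventions consistent throughout. Once both identifications are pinned down, the cone-duality argument closes out the proof in one stroke.
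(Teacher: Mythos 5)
Your proposal is correct and follows essentially the same route as the paper: reduce by duality (using that the pairing between $\{-E,-(E+F)\}$ and $\{e,f\}$ is the identity) to showing $-E$ and $-(E+F)$ are $\varphi$-nef, handle $-E$ as a pullback, localize $-(E+F)$ to fibers over $Z''$ via \cref{lemma: ampleness of restriction of -(E+F)}, and check the two components $W_z$ and $F_z$ using \cref{lemma: blowup of projective space along linear subspace} and the nefness of $\con{B'}{X'}\rvert_{B'_z}\otimes\OC_{B'_z}(1)\cong\OC(1)^{\oplus(b-c)}\oplus\OC^{\oplus c}$, respectively. The only cosmetic differences are that the paper simply cites nefness of $-E'$ over $X''$ rather than the projection-formula argument, and argues nefness of $-(E+F)\rvert_{F_z}$ directly from the nef twisted conormal bundle rather than via base-point-freeness.
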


\begin{proof}
    By \cref{lemma: intersection numbers on two blowups}, we have the intersection numbers:
    \[
        -E.f=-(E+F).e=0.
    \]
    Since $-E'$ is nef over $X''$, its pullback $-E=-\pi^*E'$ is also nef over $X''$.
    Thus, to establish the structure of the cones, it suffices to prove that the divisor $-(E+F)$ is nef over $X''$.

    In view of \cref{lemma: ampleness of restriction of -(E+F)}, it remains to check nefness over $Z''$.
    Let $X_z\coloneq\varphi^{-1}(z)$ be the fiber over a point $z\in Z''$.
    It is enough to show that the restriction $-(E+F)\rvert_{X_z}$ is nef.

    Assume that the irreducible component of $Z''$ containing $z$ has codimension $a+b-c$ in $X''$ (recall that $c\ge1$).
    By \cref{lemma: structure of the fiber X_z}, the fiber $X_z$ is the union of $W_z$ and $F_z$, where $W_z$ is the strict transform $\pi^{-1}_*W'_z$ of the fiber $W'_z=(\pi')^{-1}(z)$, and $F_z=X_z\cap F$.

    First, consider the component $W_z$.
    According to \cref{lemma: structure of the fiber X_z}, $W_z$ is the blowup of the projective space $W'_z$ along a linear subspace, with exceptional divisor $F\rvert_{W_z}$.
    Moreover,
    \[
        -E\rvert_{W_z}\sim(\pi\rvert_{W_z})^*\OC_{\PP{a-1}}(1).
    \]
    Hence \cref{lemma: blowup of projective space along linear subspace} implies that $-(E+F)\rvert_{W_z}$ is nef.

    Next, consider $F_z$.
    By \cref{lemma: structure of the fiber X_z}, $F_z$ is the $\PP{b-1}$-bundle over $B'_z=W'_z\cap B'$:
    \begin{equation}
        F_z\cong\PB_{B'_z}(\con{B'}{X'}\rvert_{B'_z}).
        \label{equation: projective bundle F_y}
    \end{equation}
    By \cref{lemma: restrict of conormal sheaf to a fiber in blowup}, the restriction of the conormal bundle is given by
    \[
        \con{B'}{X'}\rvert_{B'_z}\cong\OC_{B'_z}^{\oplus(b-c)}\oplus\OC_{B'_z}(-1)^{\oplus c}.
    \]
    Since $-F\rvert_{F_z}$ is the tautological line bundle $\OC_{F_z}(1)$ of the projective bundle \cref{equation: projective bundle F_y}, we have
    \[
        -(E+F)\rvert_{F_z}\sim
        (\pi\rvert_{F_z})^*\OC_{B'_z}(1)+\OC_{F_z}(1).
    \]
    The nefness of this divisor follows from the fact that the vector bundle
    \[
        \con{B'}{X'}\rvert_{B'_z}\otimes\mathcal O_{B'_z}(1)\cong\mathcal O_{B'_z}(1)^{\oplus(b-c)}\oplus\mathcal O_{B'_z}^{\oplus c}
    \]
    is nef.
\end{proof}

We now prove \cref{introthm: existence of small contraction}.

\begin{thm}
    \label{Theorem: existence of small contraction}
    In the setting of \cref{setup2 of two blowups,setup: curves e and f}, assume that $X''$ is quasi-projective.
    Then there exists a birational contraction $\psi\colon X\to X_0$ contracting the extremal ray $\RB^+[e]$ of $\NEbar(X/X'')$ such that $\Exc(\psi)=W$.
    Furthermore,
    \begin{enumerate}
        \item\label{item: psi small iff A not contained in B} $\psi$ is a small contraction if and only if $A''\nsubseteq B''$,
        \item\label{item: psi is K-extremal iff a>b} $\psi$ is $K_X$-extremal if and only if $a>b$, and
        \item\label{item: X_0 is blowup along union} $X_0$ is the blowup of $X''$ along $A''\cup B''$.
    \end{enumerate}
\end{thm}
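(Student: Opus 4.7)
The plan is to prove the four claims in the order: existence of $\psi$; the numerical criteria \cref{item: psi small iff A not contained in B} and \cref{item: psi is K-extremal iff a>b}; the exceptional locus $\Exc(\psi)=W$; and finally \cref{item: X_0 is blowup along union}.

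For existence, I would promote the nefness of $-(E+F)$ over $X''$ from \cref{proposition: structure of NE(X/Y)} to semiampleness. Outside $\varphi^{-1}(Z'')$, $-(E+F)$ is already $\varphi$-ample by \cref{lemma: ampleness of restriction of -(E+F)}. Over each point of $Z''$ lying in a component of codimension $a+b-c$, the proof of \cref{proposition: structure of NE(X/Y)} shows that $-(E+F)\rvert_{W_z}$ is the semiample class defining the $\PP{a-c}$-bundle $W_z\to\PP{c-1}$ of \cref{rem: projective bundle structure of the blowup of a projective space along a linear subspace}, and that $-(E+F)\rvert_{F_z}$ is the globally generated tautological class of a globally generated projective bundle. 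A sufficiently divisible multiple of $-(E+F)$ is then base-point-free over $X''$, yielding a contraction $\psi\colon X\to X_0$ over $X''$ with $\rho(X/X_0)=1$ collapsing the ray $\RB^+[e]$.

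For the numerical criteria and the exceptional locus: the blowup canonical formula gives $K_X=\varphi^*K_{X''}+(a-1)E+(b-1)F$, and combined with \cref{lemma: intersection numbers on two blowups} yields $K_X\cdot e=-(a-1)+(b-1)=b-a$, proving \cref{item: psi is K-extremal iff a>b}. Using \cref{lemma: structure of the fiber X_z} componentwise on $Z''$, one computes $\dim W=\dim Z''+a-1$ and $\dim\psi(W)=\dim Z''+c-1$, so $\codim_X W=b-c+1$; thus $\psi$ is small iff $c<b$ on every component of $Z''$, which is equivalent to $A''\nsubseteq B''$, proving \cref{item: psi small iff A not contained in B}. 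The same fiber analysis yields $\Exc(\psi)=W$: each $W_z$ is contracted onto $\PP{c-1}$ with positive-dimensional $\PP{a-c}$-fibers, while on $F_z$ (a distinct component when $A''\nsubseteq B''$) the morphism $\psi\rvert_{F_z}$ is finite because $-(E+F)\rvert_{F_z}$ comes from an ample-plus-trivial bundle.

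For \cref{item: X_0 is blowup along union}, set $\tilde X\coloneq\Bl_{A''\cup B''}X''$, the blowup along the scheme-theoretic union with ideal sheaf $\mathcal{I}\coloneq\mathcal{I}_{A''}\cap\mathcal{I}_{B''}$. The plan is to verify by a local computation, in coordinates adapted to $A''$ and $B''$ near $Z''$ and organized by the intersection parameter $c$, that the pullback $\mathcal{I}\cdot\OC_X$ is invertible and equal to $\OC_X(-E-F)$. By the universal property of blowups, this produces a morphism $\tilde\psi\colon X\to\tilde X$ over $X''$ with $\tilde\psi^*\OC_{\tilde X}(1)\cong\OC_X(-E-F)$, so $\tilde\psi$ and $\psi$ collapse the same extremal ray; the uniqueness of extremal contractions then identifies $X_0$ with $\tilde X$. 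The main obstacle is this ideal-theoretic verification, since $\mathcal{I}_{A''}\cap\mathcal{I}_{B''}$ is not in general the product $\mathcal{I}_{A''}\cdot\mathcal{I}_{B''}$, and its behavior under the two blowups requires separate treatment of the case $A''\subseteq B''$ (in which $E'$ appears with positive multiplicity in the total transform of $B''$) from the case $A''\nsubseteq B''$.
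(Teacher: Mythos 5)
Your computation $K_X.e=b-a$ for \cref{item: psi is K-extremal iff a>b}, the dimension count for \cref{item: psi small iff A not contained in B}, and the inclusion $\Exc(\psi)\supseteq W$ all match the paper, but three steps are genuinely incomplete. The most serious is \cref{item: X_0 is blowup along union}: the universal property gives a morphism $X\to\tilde X=\Bl_{A''\cup B''}X''$ over $X''$, but to identify $\tilde X$ with $X_0$ (rather than concluding only that $X_0$ is the \emph{normalization} of $\tilde X$) you must prove that $\tilde X$ is normal --- a birational projective morphism from $X$ to a non-normal target is not a contraction. This is the technical heart of the paper's proof of \cref{item: X_0 is blowup along union}: it shows $I^n=\pf_A^n\cap\pf_B^n$ in adapted local coordinates, deduces that $I=\IC_{A''}\cap\IC_{B''}$ is normally torsion-free, and invokes a criterion for normality of the Rees algebra. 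You instead single out the invertibility of $\IC\cdot\OC_X$ as the main obstacle, which is the easy half. Second, your argument for $\Exc(\psi)\subseteq W$ asserts that $\psi\rvert_{F_z}$ is finite because $-(E+F)\rvert_{F_z}$ is the tautological class of $\OC_{B'_z}(1)^{\oplus(b-c)}\oplus\OC_{B'_z}^{\oplus c}$; but a nef-plus-trivial bundle is not ample, and indeed $-(E+F)$ restricts to $\OC(0,1)$ on $F_z\cap W_z\cong\PP{a-1-c}\times\PP{c-1}$, so $\psi$ contracts curves inside $F_z$ whenever $a-1-c\ge1$ (as happens in the application of \cref{section: weak Fano 4-fold}, where $a=3$, $c=1$). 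The correct claim is that every $\psi$-contracted curve $C\subseteq F_z$ lies in $F_z\cap W_z\subseteq W$, which the paper proves by showing that the surjection $\pi_C^*(\con{B'}{X'}\rvert_{B'_z})\twoheadrightarrow\OC_C(-F)$ must factor through $\pi_C^*\con{B'_z}{W'_z}$, using $H^0(C,\OC_C(-F))=0$.

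Third, for existence you promote fiberwise base-point-freeness of $-(E+F)$ to relative semiampleness; this implication fails in general without an extra input such as cohomology-and-base-change, or the observation that $\OC_X(-E-F)\otimes\varphi^*\OC_{X''}(H'')$ is a quotient of $\varphi^*(\IC_{A''\cup B''}\otimes\OC_{X''}(H''))$ for $H''$ sufficiently ample. The paper avoids this entirely and obtains $\psi$ from the contraction theorem: when $a>b$ the ray $\RB^+[e]$ is $K_X$-negative, and when $a\le b$ one constructs a klt pair $(X,\Delta)$ with $\Delta$ supported on members of $\linsys{\varphi^*H''-F}$ through $B''$ so that $(K_X+\Delta).e<0$. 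Each of these three gaps is repairable, but as written none of the corresponding steps goes through.
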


\begin{proof}
    The canonical divisor of $X$ is given by
    \[
        K_X=\varphi^*K_{X''}+(a-1)E+(b-1)F.
    \]
    By \cref{lemma: intersection numbers on two blowups}, we have $K_X.e=b-a$.
    If $a<b$, \cref{proposition: structure of NE(X/Y)} implies that $\RB^+[e]$ is a $K_X$-negative extremal ray of $\NEbar(X/X'')$, and the contraction theorem \cite[Theorem 3.25]{book_kollar_mori_1998_birational_geometry_of_algebraic_varieties} yields a contraction $\psi\colon X\to X_0$ over $X''$ contracting this ray.
    
    Assume that $b-a\ge0$ and set $m\coloneq b-a+1$.
    Let $H''$ be a sufficiently ample divisor on $X''$, and choose $m$ members $\Delta''_1,\dotsc,\Delta''_m\in\linsys{H''}$ such that:
    \begin{itemize}
        \item each $\Delta''_i$ contains $B''$,
        \item each $\Delta''_i$ meets $A''$ transversely, and
        \item $\sum_i\Delta''_i$ is a simple normal crossing divisor on $X''$.
    \end{itemize}
    Set $\Delta_i\coloneq\varphi^{-1}_*\Delta''_i$ and $H\coloneq\varphi^*H''$.
    Then $\Delta_i\in\linsys{H-F}$ for each $i$, and $\sum_i\Delta_i$ is simple normal crossing on $X$.
    Define
    \[
        \Delta\coloneq\mleft(1-\frac1{2m}\mright)\sum_{i=1}^m\Delta_i.
    \]
    The pair $(X,\Delta)$ is Kawamata log terminal (klt).
    Then \cref{lemma: intersection numbers on two blowups} gives
    \[
        (K_X+\Delta).e
        =b-a+\mleft(1-\frac1{2m}\mright)\sum_{i=1}^m(H-F).e
        =-\frac12<0.
    \]
    Thus, \cref{proposition: structure of NE(X/Y)} and the contraction theorem imply the existence of a contraction $\psi\colon X\to X_0$ over $X''$ contracting the $(K_X+\Delta)$-negative extremal ray $\RB^+[e]$.

    We now show that $\Exc(\psi)=W$.
    Note that a curve $C\subseteq X$ contracted by $\varphi$ is contracted by $\psi$ if and only if $-(E+F).C=0$ (\cref{lemma: intersection numbers on two blowups,proposition: structure of NE(X/Y)}).
    In this case, the point $z\coloneq\varphi(C)$ lies in $Z''$ by \cref{lemma: ampleness of restriction of -(E+F)}.
    Moreover, for each closed point $z\in Z''$, the fiber $X_z=\varphi^{-1}(z)$ is the union of $W_z$ and $F_z$.
    Here $W_z$ is the strict transform $\pi^{-1}_*W'_z$ of the fiber $W'_z=(\pi')^{-1}(z)$, and $F_z=F\cap X_z$, see \cref{lemma: structure of the fiber X_z}.
    
    To prove $\Exc(\psi)\supseteq W$, observe that $W\cap\pi^{-1}(W'_z)=W_z$ for $z\in Z''$, and hence $W$ is covered by the $\{W_z\}_{z\in Z''}$.
    Thus, it suffices to show that $\Exc(\psi)\supseteq W_z$ for each $z\in Z''$.
    By \cref{lemma: structure of the fiber X_z}, the restriction $\pi_{W_z}\coloneq\pi\rvert_{W_z}\colon W_z\to W'_z$ is the blowup of the projective space $W'_z$ along a linear subspace with exceptional divisor $F\rvert_{W_z}$.
    Moreover,
    \[
        -(E+F)\rvert_{W_z}\sim\pi_{W_z}^*\OC_{W'_z}(1)-F\rvert_{W_z}.
    \]
    Consequently, $W_z$ is covered by curves $C$ satisfying $-(E+F).C=0$ (see \cref{rem: projective bundle structure of the blowup of a projective space along a linear subspace}), i.e., by curves contracted by $\psi$.
    Hence, we have $\Exc(\psi)\supseteq W_z$.
    
    Conversely, to prove $\Exc(\psi)\subseteq W$, let $C\subseteq X$ be a curve contracted by $\psi$, that is, $-(E+F).C=0$.
    Recall that the point $z\coloneq\varphi(C)$ lies in $Z''$, and the fiber $X_z$ containing $C$ is the union $F_z\cup W_z$.
    It suffices to show that $C\subseteq W_z$, and thus we may assume that $C\subseteq F_z$.
    Suppose that the irreducible component of $Z''$ containing $z$ has codimension $a+b-c$ in $X''$.
    Note that $c\ge1$ by assumption.
    By \cref{lemma: structure of the fiber X_z}, $F_z$ is the $\PP{b-1}$-bundle over $B'_z=W'_z\cap B'$
    \[
        F_z\cong\PB_{B'_z}(\con{B'}{X'}\rvert_{B'_z}).
    \]
    Moreover, \cref{lemma: restrict of conormal sheaf to a fiber in blowup} implies
    \[
        \con{B'}{X'}\rvert_{B'_z}\cong\OC_{B'_z}^{\oplus(b-c)}\oplus\OC_{B'_z}(-1)^{\oplus c}.
    \]
    On the other hand, since $F_z\cap W_z$ is the exceptional divisor of the blowup $W_z\to W'_z$ along $B'_z$, it is given by the projective bundle
    \[
        F_z\cap W_z\cong\PB_{B'_z}(\con{B'_z}{W'_z}).
    \]
    Since the center $B'_z$ is a linear subspace of $W'_z\cong\PP{a-1}$ of codimension $c$ by \cref{lemma: structure of the fiber X_z}, we have $\con{B'_z}{W'_z}\cong\OC_{B'_z}(-1)^{\oplus c}$ by \cref{lemma: conormal sheaf of a linear subspace}.
    The inclusion $F_z\cap W_z\hookrightarrow F_z$ of two projective bundles over $B'_z$ induces a surjection
    \[
    \begin{tikzcd}
        \con{B'}{X'}\rvert_{B'_z} \arrow[r, two heads]
            &\con{B'_z}{W'_z},
    \end{tikzcd}
    \]
    corresponding to the projection $\OC_{B'_z}^{\oplus(b-c)}\oplus\OC_{B'_z}(-1)^{\oplus c}\to\OC_{B'_z}(-1)^{\oplus c}$.
    The curve $C\subseteq F_z$ induces a surjection on $C$
    \begin{equation}
    \label{equation: surjection corresponing to curve C}
    \begin{tikzcd}
        \OC_C^{\oplus(b-c)}\oplus\pi_C^*\OC_{B'_z}(-1)^{\oplus c}\cong\pi_C^*(\con{B'}{X'}\rvert_{B'_z}) \arrow[r, two heads]
            & \OC_C(-F),
    \end{tikzcd}
    \end{equation}
    where $\pi_C\coloneq \pi\rvert_C\colon C\to B'_z$.
    Since $-(E+F).C=0$, we have
    \begin{align*}
        \Hom_{\OC_C}(\OC_C^{\oplus(b-c)},\OC_C(-F))
        &\cong H^0(C,\OC_C(-F))^{\oplus(b-c)}\\
        &\cong H^0(C, \OC_C(-(E+F))\otimes\pi_C^*\OC_{B'_z}(-1))^{\oplus(b-c)}\\
        &=0.
    \end{align*}
    Thus, the surjection \cref{equation: surjection corresponing to curve C} factors through $\pi_C^*\OC_{B'_z}(-1)^{\oplus c}$:
    \[
    \begin{tikzcd}[column sep=small, row sep=small]
        \pi_C^*(\con{B'}{X'}\rvert_{B'_z}) \arrow[rr, two heads]\arrow[rd, two heads]
            && \OC_C(-F)\\
        & \pi_C^*(\con{B'_z}{W'_z}) \arrow[ru, two heads]
    \end{tikzcd}
    \]
    This means that the curve $C$ is contained in $F_z\cap W_z$, as required.

    We now prove \cref{item: psi small iff A not contained in B,item: psi is K-extremal iff a>b,item: X_0 is blowup along union}.
    The equation $K_X.e=b-a$ implies \cref{item: psi is K-extremal iff a>b}.
    To prove \cref{item: psi small iff A not contained in B}, recall that $Z''$ is the disjoint union of subvarieties $Z''_i$ of codimension $a+b-c_i$ with $c_i\ge1$.
    Since $\pi'\rvert_{W'}\colon W'\to Z''$ is a $\PP{a-1}$-bundle, each $W'_i\coloneq(\pi')^{-1}(Z''_i)$ has codimension $b-c_i+1$.
    Thus, $\psi$ is a small contraction if and only if $c_i<b$.
    Note that $c_i=b$ corresponds to $Z''=A''$, that is, $A''\subseteq B''$.
    Hence $\psi$ is small if and only if $A''\nsubseteq B''$.

    It remains to show \cref{item: X_0 is blowup along union}.
    Let $\pi_1\colon X_1\to X''$ be the blowup of $X''$ along $A''\cup B''$.
    The inverse image ideal sheaf $\varphi^{-1}\IC_{A''\cup B''}$ coincides with $\OC_X(-E-F)$, which is invertible.
    By the universal property of blowups, there exists a morphism $X\to X_1$ such that the following diagram commutes:
    \[
    \begin{tikzcd}[row sep=small,column sep=small]
        X\arrow[rr,"\varphi"]\arrow[rd] && X''\\
        & X_1\arrow[ru,"\pi_1"']
    \end{tikzcd}
    \]
    We claim the following.

    \begin{claim}\label{claim: X1 normal}
        $X_1$ is normal.
    \end{claim}

    \begin{proof}[Proof of \cref{claim: X1 normal}]
        Since $\pi_1^{-1}(X''\setminus Z'')$ is the blowup of the smooth variety $X''\setminus Z''$ along the smooth center $(A''\setminus Z'')\sqcup(B''\setminus Z'')$, it is smooth.
        Thus, it suffices to prove that, for any scheme-theoretic point $z\in Z''$, the localization $X_1\times_{X''}\Spec\OC_{X'',z}$ is normal.
        Let $\pf_A,\pf_B\subseteq\OC_{X'',z}$ be the prime ideals defining $A''$ and $B''$ at $z$, respectively, and let $I\coloneq\pf_A\cap\pf_B$ be the ideal defining $A''\cup B''$.
        Then
        \[
            X_1\times_{X''}\Spec\OC_{X'',z}\cong\Proj R(I),
        \]
        where $R(I)=\bigoplus_{n\ge0}I^n$ is the Rees algebra.
        Thus, it suffices to prove that $R(I)$ is normal.
        By \cite[Corollary 5.4.6]{vasconcelos_1994_arithmetic_of_blowup_algebras}, it suffices to show that $I$ is normally torsion-free, that is, $\Ass(\OC_{X'',z}/I^n)=\Ass(\OC_{X'',z}/I)$ for any $n\ge1$.
        We will show that
        \[
            I^n=\pf_A^n\cap\pf_B^n,
        \]
        which implies that
        \[
            \Ass(\OC_{X'',z}/I^n)=
            \begin{cases}
                \{\pf_B\} & \text{if }\pf_A\supseteq\pf_B\text{ (or equivalently }A''\subseteq B''),\\
                \{\pf_A,\pf_B\} & \text{otherwise.}
            \end{cases}
        \]
        Since $X''$, $A''$, $B''$, and $Z''$ are smooth, we can choose a regular system of parameters $x_1,\dotsc,x_c,y_{c+1},\dotsc,y_a,z_{c+1},\dotsc,z_b,w_1,\dotsc,w_r$ of $\OC_{X'',z}$ such that
        \[
            \pf_A=(x_1,\dotsc,x_c,y_{c+1},\dotsc,y_a),\quad\pf_B=(x_1,\dotsc,x_c,z_{c+1},\dotsc,z_b).
        \]
        Passing to the completion, we may assume that $\OC_{X'',z}$ is complete, i.e.,
        \[
            \OC_{X'',z}\cong\CB[[x_1,\dotsc,x_c,y_{c+1},\dotsc,y_a,z_{c+1},\dotsc,z_b,w_1,\dotsc,w_r]].
        \]
        Since $\pf_A$ and $\pf_B$ are monomial ideals in these coordinates, it suffices to verify the equality $I^n=~\pf_A^n\cap\pf_B^n$ for monomials.
        
        Let $f\in\OC_{X'',z}$ be a monomial in the variables $x,y,z,w$.
        Then $f$ belongs to the ideal
        \[
            I^n=\bigl((x_1,\dotsc,x_c)+(y_i z_j\mid i,j>c)\bigr)^n
        \]
        if and only if
        \[
            \deg_xf+\min\{\deg_yf,\deg_zf\}\ge n,
        \]
        which is equivalent to $f\in\pf_A^n\cap\pf_B^n$.
        This proves $I^n=\pf_A^n\cap\pf_B^n$, concluding the proof of \cref{claim: X1 normal}.
    \end{proof}

    By \cref{claim: X1 normal}, the induced morphism $X\to X_1$ is the contraction of some extremal face in $\NEbar(X/X'')$.
    Since $X_1$ is not isomorphic to any of $X$, $X'$, or $X''$, the morphism $X\to X_1$ contracts the extremal ray $\RB^+[e]$, and thus coincides with $\psi\colon X\to X_0$.
\end{proof}

With the notation of \cref{setup2 of two blowups}, if $A''\nsubseteq B''$ and $a>b$, the flip of the $K_X$-extremal small contraction $\psi\colon X\to X_0$ (from \cref{Theorem: existence of small contraction}) is obtained by interchanging the roles of $A''$ and $B''$.
This establishes \cref{introthm: flip}.

\begin{cor}
    \label{corollary: flip}
    In the setting of \cref{setup2 of two blowups}, assume further that $X''$ is quasi-projective and $A''\nsubseteq B''$.
    Let $X^+$ be the successive blowup of $X''$ along $B''$ and along the strict transform of $A''$.
    \begin{enumerate}
        \item If $a=b$, then the induced map $X\dashrightarrow X^+$ is a flop.
        \item If $a>b$, then $X\dashrightarrow X^+$ is the flip of the $K_X$-extremal small contraction $\psi\colon X\to X_0$ described in \cref{Theorem: existence of small contraction}.
    \end{enumerate}
\end{cor}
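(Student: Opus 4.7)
The plan is to apply \cref{Theorem: existence of small contraction} to the \emph{swapped} construction, in which the roles of $A''$ and $B''$ are interchanged: first blow up $X''$ along $B''$, then along the strict transform of $A''$. The hypotheses of \cref{setup2 of two blowups} are symmetric in $A''$ and $B''$ except for the requirement $A''\nsupseteq B''$, whose swapped version reads $B''\nsupseteq A''$, i.e., $A''\nsubseteq B''$; this is exactly the additional hypothesis of the corollary. Thus \cref{Theorem: existence of small contraction} applies to the swapped sequence $X''\leftarrow Y'\leftarrow X^+$ and produces a small birational contraction $\psi^+\colon X^+\to X_0^+$ with $\rho(X^+/X_0^+)=1$, contracting an extremal ray $\RB^+[e^+]$, where $e^+$ denotes the analogue of the curve $e$ in the swapped setup.

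The decisive observation is \cref{Theorem: existence of small contraction}\cref{item: X_0 is blowup along union}: both $X_0$ and $X_0^+$ are identified with the blowup of $X''$ along the union $A''\cup B''$, which is manifestly symmetric, so $X_0$ and $X_0^+$ are canonically the same scheme. Since $\psi$ is small by \cref{Theorem: existence of small contraction}\cref{item: psi small iff A not contained in B} (using $A''\nsubseteq B''$) and $\psi^+$ is small by the same statement applied to the swapped setup (using $A''\nsupseteq B''$), the induced birational map $X\dashrightarrow X^+$ over $X_0=X_0^+$ is an isomorphism in codimension one, which is exactly what is required for a flip or flop.

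To decide which, I compute intersection numbers against the contracted rays. From the proof of \cref{Theorem: existence of small contraction} we already have $K_X.e=b-a$, and the symmetric calculation applied to the swapped sequence yields $K_{X^+}.e^+=a-b$. If $a=b$, both numbers vanish, and since $\rho(X/X_0)=\rho(X^+/X_0)=1$ this forces $K_X$ and $K_{X^+}$ to be numerically trivial over $X_0$, so $X\dashrightarrow X^+$ is a flop. If $a>b$, then $-K_X$ is $\psi$-ample while $K_{X^+}$ is $\psi^+$-ample, so $X\dashrightarrow X^+$ is by definition the flip of $\psi$. The argument is largely formal once \cref{Theorem: existence of small contraction} is in hand; the only point worth watching is that the targets $X_0$ and $X_0^+$ genuinely coincide \emph{as schemes}, not merely birationally, and this is immediate from the symmetric description in \cref{Theorem: existence of small contraction}\cref{item: X_0 is blowup along union}.
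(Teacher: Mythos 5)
Your proposal is correct and follows exactly the route the paper intends: the paper states this corollary with only the preceding remark that the flip is ``obtained by interchanging the roles of $A''$ and $B''$,'' relying on \cref{Theorem: existence of small contraction}\cref{item: X_0 is blowup along union} to identify both targets with the blowup of $X''$ along $A''\cup B''$, which is precisely your argument. Your explicit verification of the smallness of both contractions, the sign of $K_X.e=b-a$ versus $K_{X^+}.e^+=a-b$, and the scheme-theoretic (not merely birational) coincidence of $X_0$ and $X_0^+$ fills in the details the paper leaves implicit.
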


\section{Applications to products of two del Pezzo surfaces}
\label{section: weak Fano 4-fold}

In this section, we apply the blowup construction introduced in \cref{setup2 of two blowups} to products of two smooth del Pezzo surfaces.
Using the results of \cref{section: nefness of divisors on blowups,section: construction}, we prove \cref{introthm: weak Fano 4-fold of Picard number 8 with a small contraction}.

\begin{setup}
    \label{setup: product of del Pezzo}
    Let $0\le r_1,r_2\le8$ be integers.
    Let $X_i$ ($i=1,2$) be the blowup of $\PP2$ at $r_i$ points in general position, $a_1\in X_1$ be a general point, $A_2\subseteq X_2$ be the strict transform of a general line in $\PP2$, and let $b_2\in A_2$ be a general point.
    Set $X''\coloneq X_1\times X_2$, $A''\coloneq\{a_1\}\times A_2$, and $B''\coloneq X_1\times\{b_2\}$.
    Note that $X_i$ is a smooth del Pezzo surface of degree $9-r_i$, and $A''\cap B''$ is the point $z\coloneq(a_1,b_2)$.
    
    Let $\pi'\colon X'\to X''$ be the blowup along $A''$ with exceptional divisor $E'$, and let $\pi\colon X\to X'$ be the blowup along $B'\coloneq(\pi')^{-1}_*B''$ with exceptional divisor $F$.
    Set $E\coloneq\pi^*E'$ and $\varphi\coloneq\pi'\circ\pi$.
    Let $f$ be a fiber of the $\PP1$-bundle $\pi\rvert_F\colon F\to B'$.
    Let $e'$ be a line in $(\pi')^{-1}(z)\cong\PP2$ not contained in $B'$, and set $e\coloneq \pi^{-1}_*e'$.
    These are consistent with \cref{setup2 of two blowups,setup: curves e and f}.

    Let $\bar H_i$ be the pullback of the class of a line by $X_i\to\PP2$, and let $H_i$ be its pullback to $X$.
    If $r_i\ge1$, let $\bar E_{i,1},\dotsc,\bar E_{i,r_i}$ be the exceptional curves of $X_i\to\PP2$, and let $E_{i,j}$ be their pullbacks to $X$.
    Note that
    \[
        H_1,E_{1,1},\dotsc,E_{1,r_1},H_2,E_{2,1},\dotsc,E_{2,r_2},E,F
    \]
    form a basis of $N^1(X)$.

    For each $i=1,2$, define a set $S_i$ of curves in $X$ as follows:
    \begin{itemize}
        \item Fix a point $x_2\in A_2\setminus\{b_2\}$.
            Let $\mu_1\colon\tilde X_1\coloneq\Bl_{a_1}X_1\to X_1$ be the blowup of the del Pezzo surface $X_1$ at the point $a_1$ with exceptional curve $\tilde E_{1,0}$.
            Let $\tilde S_1$ be a set of curves in $\tilde X_1$ whose numerical classes generate $\NEbar(\tilde X_1)$.
            If $1\le r_1\le7$, then $\tilde S_1$ can be chosen to be the set of ($-1$)-curves on $\tilde X_1$.
            Then define
            \[
                S_1\coloneq\set{\varphi^{-1}_*((\mu_1)_*\tilde C_1\times\{x_2\}) | \tilde C_1\in\tilde S_1\setminus\{\tilde E_{1,0}\}}.
            \]
        \item Let $\bar S_2$ be a set of curves $\bar C_2\subseteq X_2$ with $b_2\notin\bar C_2$ such that their numerical classes $[\bar C_2]$ generate $\NEbar(X_2)$ (recall that $b_2$ is a general point of the del Pezzo surface $X_2$).
            Define
            \[
                S_2\coloneq\set{\varphi^{-1}_*(\{a_1\}\times\bar C_2) | \bar C_2\in\bar S_2}.
            \]
    \end{itemize}

    Moreover, let $\bar T_1$ be a set of divisors on $X_1$ such that
    \[
        \mu_1^*\Nef(X_1)+\RB^+[\mu_1^*\bar D_1-\tilde E_{1,0} \mid \bar D_1\in\bar T_1]=\Nef(\tilde X_1).
    \]
    Define
    \[
        T\coloneq \set{D_1+H_2-E | \bar D_1\in\bar T_1}\cup\set{D_1+H_2-E-F | \bar D_1\in\bar T_1},
    \]
    where $D_1$ denotes the pullback of $\bar D_1$ to $X$.
    Note that $\bar T_1$ and $T$ can be chosen to be finite if $r_1\le7$.
\end{setup}

The following intersection computations follow directly from the setup.

\begin{lem}
    \label{lemma: intersections on blowup of product of del Pezzos}
    In the setting of \cref{setup: product of del Pezzo}, let $C_i\in S_i$ for $i=1,2$.
    Let $D$ be an $\mathbb R$-divisor on $X$, and write $D=D_1+D_2+\alpha E+\beta F$, where each $D_i$ is the pullback of an $\mathbb R$-divisor $\bar D_i$ on $X_i$, and $\alpha,\beta\in\mathbb R$.
    Then
    \[
        D.C_1=(\mu_1^*\bar D_1+\alpha\tilde E_{1,0}).\tilde C_1,\qquad D.C_2=(\bar D_2+\alpha \bar H_2).\bar C_2,
    \]
    where $\tilde C_1\in\tilde S_1$, $C_1=\varphi^{-1}_*((\mu_1)_*\tilde C_1\times\{x_2\})$, $\bar C_2\in\bar S_2$, and $C_2=\varphi^{-1}_*(\{a_1\}\times\bar C_2)$.
\end{lem}

\begin{proof}
    Let $\bar C_1\coloneq(\mu_1)_*\tilde C_1$, $C''_1\coloneq\varphi_*C_1=\bar C_1\times\{x_2\}$ and $C''_2\coloneq\varphi_*C_2=\{a_1\}\times\bar C_2$.
    We see that $B''\cap C''_i=\emptyset$ since $x_2\ne b_2$ and $b_2\notin\bar C_2$, and thus $F.C_i=0$.
    Let $D''_i$ be the pullback of $\bar D_i$ to $X''$.
    Then
    \begin{align*}
        D.C_1&=D''_1.C''_1+\alpha\mult_{A''}C''_1=\bar D_1.\bar C_1+\alpha\mult_{a_1}\bar C_1=(\mu_1^*\bar D_1+\alpha\tilde E_{1,0}).\tilde C_1,\\
        D.C_2&=D''_2.C''_2+\alpha\mult_{A''}C''_2=(\bar D_2+\alpha\bar H_2).\bar C_2. \qedhere
    \end{align*}
\end{proof}

We then determine the cone of curves and the nef cone of $X$ by applying the results from \cref{section: nefness of divisors on blowups}.

\begin{thm}
    \label{theorem: Nef and NE of blowup of product of del Pezzo}
    In the setting of \cref{setup: product of del Pezzo},
    \begin{align*}
        \NEbar(X)&=\RB^+[e,f,S_1,S_2],\\
        \Nef(X)&=\Nef(X_1)+\Nef(X_2)+\RB^+[T].
    \end{align*}
\end{thm}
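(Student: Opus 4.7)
The aim is to establish simultaneously $\Nef(X)=P$ and $\NEbar(X)=N$, where $P\coloneq\Nef(X_1)+\Nef(X_2)+\RB^+[T]$ and $N\coloneq\RB^+[e,f,S_1,S_2]$. I would prove three claims: (i) $P\subseteq\Nef(X)$; (ii) $N\subseteq\NEbar(X)$; and (iii) the cones $P\subseteq N^1(X)$ and $N\subseteq N_1(X)$ are mutually dual. Granted these, duality $\Nef(X)=\NEbar(X)^\vee$ together with the closedness of the finitely generated cone $N$ yields
\[
    P\subseteq\Nef(X)=\NEbar(X)^\vee\subseteq N^\vee=P,
\]
which forces $\Nef(X)=P$, and dualizing gives $\NEbar(X)=N$.

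For step (i), the generators of $\Nef(X_1)+\Nef(X_2)$ are nef as pullbacks. Each divisor in $T$ has the form $N_1+H_2-E$ or $N_1+H_2-E-F$ with $N_1\in T_1$, and I would derive their nefness from \cref{lemma: nefness of H1+H2-E} and \cref{lemma: nefness of H1+H2-E-F simple} respectively, taking $\bar H_1$ to be the class of $N_1$ on $X_1$ and $\bar H_2$ the class of a line on $X_2$. On the $X_2$-side the chain $X_2\supseteq A_2\supseteq\{b_2\}$ works, since $\bar H_2-A_2\sim 0$ and $\bar H_2|_{A_2}-\{b_2\}$ has degree $0$. On the $X_1$-side, one picks a smooth curve $C_1\in|N_1|$ passing through $a_1$ and the appropriate blown-up points (a line for $N_1=H_1$, a smooth conic for $N_1=2H_1-E_{1,j_1}-E_{1,j_2}$ or $N_1=2H_1-\sum_j E_{1,j}$), giving the chain $X_1\supseteq C_1\supseteq\{a_1\}$; the genericity assumption on the blown-up points ensures such $C_1$ exists and is smooth, and the residue $N_1|_{C_1}-\{a_1\}$ has nonnegative degree in each case.

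Step (ii) is immediate from \cref{setup: product of del Pezzo}, since each element of $\{e,f\}\cup S_1\cup S_2$ is explicitly exhibited as an effective curve. For step (iii), the inclusion $P\subseteq N^\vee$ reduces to a direct intersection computation via \cref{lemma: intersection numbers on two blowups,lemma: intersections on blowup of product of del Pezzos}: every generator of $T$ pairs nonnegatively with every curve in $\{e,f\}\cup S_1\cup S_2$, and pullbacks of nef classes from $X_i$ pair nonnegatively with all effective curves. For the reverse inclusion $N^\vee\subseteq P$, I would expand $D\in N^\vee$ in the basis $H_1,E_{1,1},\dotsc,E_{1,r_1},H_2,E_{2,1},\dotsc,E_{2,r_2},E,F$ of $N^1(X)$; the conditions $D.f\ge 0$ and $D.e\ge 0$ bound the coefficients of $F$ and of $E+F$, so one subtracts appropriate nonnegative multiples of $T$-generators to eliminate the $E$- and $F$-components of $D$. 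The residue $D'$ lies in $N^1(X_1\times X_2)\hookrightarrow N^1(X)$, and the remaining pairings of $D$ with the classes in $S_1\cup S_2$ coincide with the defining inequalities of $\Nef(X_1)\oplus\Nef(X_2)$, so $D'\in\Nef(X_1)+\Nef(X_2)\subseteq P$.

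The main obstacle is the reverse inclusion $N^\vee\subseteq P$ in step (iii). The specific choice of $N_1\in T_1$ to be subtracted depends on the signs and magnitudes of the $E_{1,j}$-coefficients of $D$, and when $r_1\in\{2,3\}$ the set $T_1$ has multiple members, necessitating a careful case analysis to ensure the residue's $E_{1,j}$-coefficients remain compatible with $\Nef(X_1)$. Moreover, the final identification of the residue with an element of $\Nef(X_1)+\Nef(X_2)$ invokes the standard description of the nef cones of the del Pezzo surfaces $X_i$ as duals of the cones generated by lines and $(-1)$-curves, a description that is valid precisely under the genericity hypothesis on the blown-up points.
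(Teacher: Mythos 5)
Your plan coincides with the paper's proof: both establish nefness of the divisors in $T$ via \cref{lemma: nefness of H1+H2-E,lemma: nefness of H1+H2-E-F simple} using the chains $X_2\supseteq A_2\supseteq\{b_2\}$ and $X_1\supseteq C\supseteq\{a_1\}$ with $C$ a line or conic through the relevant points, and both then conclude by duality after showing that the dual cone of $\RB^+[e,f,S_1,S_2]$ is contained in $\Nef(X_1)+\Nef(X_2)+\RB^+[T]$. The only content you defer --- the explicit convex decompositions in the cases $\gamma=0$, $\beta_{1,2}=0$, $\beta_{1,2}\le\gamma<0$, and $\gamma\le\beta_{1,2}<0$ --- is exactly what the paper carries out, and your description of how to do it (first absorbing $(\alpha_2+\gamma)H_2+\sum_j\beta_{2,j}E_{2,j}$ into $\Nef(X_2)$, then subtracting multiples of $T$-generators determined by the signs and magnitudes of the $E_{1,j}$-, $E$-, and $F$-coefficients) matches the paper's computation.
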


\begin{proof}
    Let $\CC\coloneq\Nef(X_1)+\Nef(X_2)+\RB^+[T]$.
    It suffices to show the following:
    \begin{enumerate}
        \item\label{item: dual of R+[e f S1 S2]} The dual cone of $\RB^+[e,f,S_1,S_2]$ is $\CC$.
        \item\label{item: T nef} The divisors in $T$ are nef.
    \end{enumerate}

    We first prove \cref{item: dual of R+[e f S1 S2]}.
    Let $D$ be an $\RB$-divisor on $X$, and write $D=D_1+D_2+\alpha E+\beta F$, where $D_i$ is the pullback of an $\RB$-divisor $\bar D_i$ on $X_i$.
    By construction and by \cref{lemma: intersection numbers on two blowups},
    \[
        D.e=-\alpha+\beta,\qquad D.f=-\beta.
    \]
    Therefore $D.e\ge0$ and $D.f\ge0$ are equivalent to $\alpha\le\beta\le0$.
    By \cref{lemma: intersections on blowup of product of del Pezzos}, the inequalities $D.C_1\ge0$ for all $C_1\in S_1$ are equivalent, together with $\alpha\le0$, to
    \[
        [\mu_1^*\bar D_1+\alpha\tilde E_{1,0}]\in\Nef(\tilde X_1).
    \]
    Indeed, the cone \(\NEbar(\tilde X_1)\) is generated by the classes of $\tilde E_{1,0}$ and the curves in $\tilde S_1\setminus\{\tilde E_{1,0}\}$, and
    \[
        (\mu_1^*\bar D_1+\alpha\tilde E_{1,0}).\tilde E_{1,0}=-\alpha.
    \]
    Similarly, the inequalities $D.C_2\ge0$ for all $C_2\in S_2$ are equivalent to
    \[
        [\bar D_2+\alpha\bar H_2]\in\Nef(X_2).
    \]
    Thus $[D]$ is in the dual cone of $\RB^+[e,f,S_1,S_2]$ if and only if
    \[
        \alpha\le\beta\le0,\qquad
        [\mu_1^*\bar D_1+\alpha\tilde E_{1,0}]\in\Nef(\tilde X_1),
        \qquad
        [\bar D_2+\alpha\bar H_2]\in\Nef(X_2).
    \]

    We now show that this cone is precisely $\CC$.
    First, note that any divisor $D\in T$ satisfies these conditions, since it is of one of the two forms $D_1+H_2-E$ and $D_1+H_2-E-F$, with $\bar D_1\in\bar T_1$.
    Moreover, since the pullbacks of nef divisors on $X_1$ and $X_2$ are nef, their classes also lie in the dual of $\RB^+[e,f,S_1,S_2]$.
    Thus $\CC$ is contained in the dual of $\RB^+[e,f,S_1,S_2]$.

    Conversely, assume that $[D]=[D_1+D_2+\alpha E+\beta F]$ is in the dual of $\RB^+[e,f,S_1,S_2]$.
    By the definition of $\bar T_1$ and the fact that $[\mu_1^*\bar D_1+\alpha\tilde E_{1,0}]\in\Nef(\tilde X_1)$, we may write
    \[
        \mu_1^*\bar D_1+\alpha\tilde E_{1,0}
        \equiv\mu_1^*\bar N_1
            +\sum_ic_i(\mu_1^*\bar G_i-\tilde E_{1,0})
    \]
    with $[\bar N_1]\in\Nef(X_1)$, $\bar G_i\in\bar T_1$, and finitely many nonzero coefficients $c_i\ge 0$.
    Comparing the coefficients of $\tilde E_{1,0}$, we get $\sum_ic_i=-\alpha$, and hence
    \[
        \bar D_1\equiv\bar N_1+\sum_ic_i\bar G_i.
    \]
    Since $\sum_ic_i=-\alpha\ge-\beta$, we can choose $c'_i,c''_i\ge0$ such that
    \[
        c_i=c'_i+c''_i,\qquad \sum_ic''_i=-\beta.
    \]
    Letting $\bar N_2\coloneq\bar D_2+\alpha\bar H_2$, which is nef, and pulling all divisors back to $X$, we obtain
    \begin{align*}
        D
        &\equiv\biggl(N_1+\sum_ic_iG_i\biggr)+(N_2-\alpha H_2)+\alpha E+\beta F\\
        &=N_1+N_2+\sum_ic'_i(G_i+H_2-E)+\sum_ic''_i(G_i+H_2-E-F),
    \end{align*}
    where $N_1$, $N_2$ and the $G_i$ are the pullbacks of $\bar N_1$, $\bar N_2$ and the $\bar G_i$ to $X$ respectively.
    Thus $[D]\in\CC$, and we have proved \cref{item: dual of R+[e f S1 S2]}.

    It remains to prove \cref{item: T nef}.
    Let $\bar G\in\bar T_1$, and let $G$, $G'$ and $G''$ be its pullbacks to $X$, $X'$ and $X''$ respectively.
    We first prove that
    \[
        D\coloneq G+H_2-E=\pi^*(G'+H'_2-E')
    \]
    is nef.
    It is enough to check the following two conditions:
    \begin{itemize}
        \item $(G''+H''_2).C''\ge \mult_{A''}C''$ for every irreducible curve $C''\subset X''$ not contained in $A''$, and
        \item $D'\rvert_{E'}$ is nef,
    \end{itemize}
    where $H''_2$ is the pullback of $\bar H_2$ to $X''$, and $D'\coloneq G'+H'_2-E'$.

    Let $p_i\colon X''=X_1\times X_2\to X_i$ be the projections.
    Let $C''\subset X''$ be an irreducible curve not contained in $A''$.
    If $p_1(C'')$ is a curve, then the nefness of $\mu_1^*\bar G-\tilde E_{1,0}$ on $\tilde X_1=\Bl_{a_1}X_1$ gives
    \[
        \bar G. C_1\ge \mult_{a_1}C_1,
    \]
    where $C_1\coloneq(p_1)_*C''$.
    Since $A''\subseteq \{a_1\}\times X_2$, the multiplicities satisfy
    \[
        \mult_{a_1}C_1
        =\mult_{\{a_1\}\times X_2}C''
        \ge \mult_{A''}C''.
    \]
    Hence
    \[
        (G''+H''_2). C''\ge G''. C''
        =\bar G. C_1\ge \mult_{A''}C''.
    \]
    If $p_1(C'')$ is a point distinct from $a_1$, then $C''\cap A''=\emptyset$, and hence $\mult_{A''}C''=0$.
    The required inequality is immediate.
    We may therefore assume that $C''=\{a_1\}\times C_2$ for an irreducible curve $C_2\subseteq X_2$.
    Since $C''\nsubseteq A''$, we see that $C_2\nsubseteq A_2$.
    Then
    \[
        (G''+H''_2). C''
        =\bar H_2. C_2
        =A_2. C_2
        =\mult_{A''}C''.
    \]
    Thus the required inequalities hold.

    We now check the nefness of $D'\rvert_{E'}$.
    Let $\pi'_E\coloneq\pi'\rvert_{E'}\colon E'\to A''\cong A_2$.
    Since
    \[
        \con{A''}{X''}\cong
        \mathcal O_{A_2}^{\oplus 2}\oplus \mathcal O_{A_2}(-A_2),
    \]
    we may write
    \[
        E'\cong
        \mathbb P_{A_2}\bigl(
            \mathcal O_{A_2}^{\oplus 2}\oplus \mathcal O_{A_2}(-A_2)
        \bigr).
    \]
    Then $\xi\coloneq -E'|_{E'}$ is the tautological divisor, and
    \[
        D'\rvert_{E'}
        =(G'+H'_2-E')\rvert_{E'}
        =(\pi'_E)^*(\bar H_2\rvert_{A_2})+\xi.
    \]
    The quotient line bundles relevant for this projective bundle come from $\mathcal O_{A_2}$ and $\mathcal O_{A_2}(-A_2)$.
    Therefore $D'\rvert_{E'}$ is nef since the divisors $\bar H_2\rvert_{A_2}$ and $(\bar H_2-A_2)\rvert_{A_2}$ on $A_2$ are nef.

    It remains to prove that $G+H_2-E-F$ is nef for $\bar G\in\bar T_1$.
    Let
    \[
        Y'\coloneq(\pi')^{-1}_*(X_1\times A_2)\subseteq X'.
    \]
    Then $Y'\cong\tilde X_1\times A_2$, $B'\cong\tilde X_1\times\{b_2\}$ and $Y'\sim H'_2-E'$ as divisors on $X'$.
    We apply \cref{lemma: nefness of H-E} on the blowup $\pi\colon X\to X'$ along $B'$, with respect to the chain $X'\supseteq Y'\supseteq B'$.
    We already know that $G'+H'_2-E'$ is nef.
    Hence the divisor
    \[
        (G'+H'_2-E')-Y'\sim G'
    \]
    is nef since the nefness of $\mu_1^*\bar G-\tilde E_{1,0}$ implies the nefness of $\bar G$.

    It remains to check that $(G'+H'_2-E')\rvert_{Y'}-B'$ is nef on $Y'$.
    Under the identification $Y'\cong\tilde X_1\times A_2$, we have
    \[
        (G'+H'_2-E')\rvert_{Y'}-B'
        =p_1^*(\mu_1^*\bar G-\tilde E_{1,0})+p_2^*(\bar H_2|_{A_2}-b_2),
    \]
    where now $p_1$ and $p_2$ denote the projections from $\tilde X_1\times A_2$ to the two factors.
    The first summand is nef by the choice of $\bar G\in\bar T_1$.
    The second summand is also nef because $A_2\cong\PP1$ and $\deg(\bar H_2\rvert_{A_2})=1$.
    Therefore $(G'+H'_2-E')\rvert_{Y'}-B'$ is nef on $Y'$.
\end{proof}

Finally, \cref{introthm: weak Fano 4-fold of Picard number 8 with a small contraction} is obtained as a corollary of \cref{theorem: Nef and NE of blowup of product of del Pezzo}.

\begin{cor}\label{cor: characterization of Fano}
    In the setting of \cref{setup: product of del Pezzo}, $X$ is a smooth projective $4$-fold admitting a $K_X$-extremal small contraction.
    Moreover,
    \begin{enumerate}
        \item\label{item: characterization of Fano}
            $X$ is Fano if and only if $r_1=r_2=0$.
        \item\label{item: characterization of weak Fano}
            $X$ is weak Fano if and only if $r_1\le5$ and $r_2\le1$.
        \item\label{item: characterization of Fano type}
            $X$ is of Fano type if and only if
            \[
            \begin{cases}
                r_1\le 6,\;r_2\le7,&\text{or}\\
                r_1=7,\;r_2\le5.
            \end{cases}
            \]
    \end{enumerate}
\end{cor}

\begin{proof}
    By construction, $X$ is a smooth projective $4$-fold.
    Moreover, $X$ has a $K_X$-extremal small contraction by \cref{Theorem: existence of small contraction}.
    Note that
    \[
         -K_X=\sum_{i=1}^2\biggl(3H_i-\sum_{j=1}^{r_i}E_{i,j}\biggr)-2E-F,
    \]
    and for $C_i\in S_i$, $i=1,2$ (see \cref{setup: product of del Pezzo}),
    \[
        -K_X.C_1
        =\biggl(3\tilde H_1-\sum_{j=1}^{r_1}\tilde E_{1,j}-2\tilde E_{1,0}\biggr).\tilde C_1,\qquad
        -K_X.C_2
        =\biggl(\bar H_2-\sum_{j=1}^{r_2}\bar E_{2,j}\biggr).\bar C_2
    \]
    by \cref{lemma: intersections on blowup of product of del Pezzos}.
    Recall also that $-K_X.e=-K_X.f=1$ by \cref{lemma: intersection numbers on two blowups}.

    We first prove \cref{item: characterization of Fano}.
    If $r_1=r_2=0$, then $X_i=\PP2$ and we can take in \cref{setup: product of del Pezzo}
    \begin{align*}
        \tilde S_1&\coloneq\{\tilde E_{1,0},\tilde C_1\},\quad
            \tilde C_1\in\linsys{\tilde H_1-\tilde E_{1,0}},\\
        \bar S_2&\coloneq\{\bar C_2\},\quad
            \bar C_2\in\linsys{\bar H_2},
    \end{align*}
    as generators of $\NEbar(\tilde X_1)$ and $\NEbar(X_2)$.
    Then $-K_X.C_i=1$ for $i=1,2$, where $C_i\in S_i$ are the corresponding transforms as in \cref{setup: product of del Pezzo}.
    Since $\NEbar(X)$ is generated by $[e]$, $[f]$, $[C_1]$ and $[C_2]$ by \cref{theorem: Nef and NE of blowup of product of del Pezzo}, $-K_X$ is ample and hence $X$ is Fano in this case.
    Conversely, if $r_1\ge1$, we can take a ($-1$)-curve $\tilde C_1\in\linsys{\tilde H_1-\tilde E_{1,1}-\tilde E_{1,0}}$, and if $r_2\ge1$, we can take $\bar C_2\in\linsys{\bar H_2-\bar E_{2,1}}$.
    Then $-K_X.C_i=0$ for the transform $C_i$ as in \cref{setup: product of del Pezzo} for each $i=1,2$, and $X$ is not Fano if $(r_1,r_2)\ne(0,0)$.

    Next we prove \cref{item: characterization of weak Fano}.
    Assume $r_1\le5$ and $r_2\le1$.
    In \cref{setup: product of del Pezzo}, we take the set $\tilde S_1$ as:
    \begin{itemize}
        \item if $r_1=0$, then $\tilde S_1\coloneq\{\tilde E_{1,0},\tilde C_1\}$, $\tilde C_1\in\linsys{\tilde H_1-\tilde E_{1,0}}$,
        \item if $1\le r_1\le 3$, then let $\tilde S_1$ be the set of ($-1$)-curves $\tilde E_{1,j}$ for $0\le j\le r_1$ and $\tilde C_{1,j_1,j_2}\in\linsys{\tilde H_1-\tilde E_{1,j_1}-\tilde E_{1,j_2}}$ for $0\le j_1<j_2\le r_1$,
        \item if $4\le r_1\le5$, then let $\tilde S_1$ be the set of ($-1$)-curves $\tilde E_{1,j}$ for $0\le j\le r_1$, $\tilde C_{1,j_1,j_2}\in\linsys{\tilde H_1-\tilde E_{1,j_1}-\tilde E_{1,j_2}}$ for $0\le j_1<j_2\le r_1$, and $\tilde C_{1,j_1,\dotsc,j_5}\in\linsys{2\tilde H_1-\sum_{l=1}^5\tilde E_{1,j_l}}$ for $0\le j_1<\dotsb<j_5\le r_1$.
    \end{itemize}
    We check that $-K_X.C_1=1$ if $r_1=0$ as above, and
    \begin{align*}
        -K_X.e_{1,j}
            &=\biggl(3\tilde H_1-\sum_{j=1}^{r_1}\tilde E_{1,j}-2\tilde E_{1,0}\biggr).\tilde E_{1,j}
            =1\quad\text{for $1\le j\le r_1$\quad if $r_1\ge1$},\\
        -K_X.C_{1,j_1,j_2}
            &=\biggl(3\tilde H_1-\sum_{j=1}^{r_1}\tilde E_{1,j}-2\tilde E_{1,0}\biggr).\tilde C_{1,j_1,j_2}
            =\begin{cases}
                0 & (j_1=0),\\
                1 & (j_1\ge1)
            \end{cases}\quad
            \text{if $r_1\ge1$},\\
        -K_X.C_{1,j_1,\dotsc,j_5}
            &=\biggl(3\tilde H_1-\sum_{j=1}^{r_1}\tilde E_{1,j}-2\tilde E_{1,0}\biggr).\tilde C_{1,j_1,\dotsc,j_5}
            =\begin{cases}
                0 & (j_1=0),\\
                1 & (j_1\ge1)
            \end{cases}\quad
            \text{if $r_1\ge4$},
    \end{align*}
    where $e_{1,j}$, $C_{1,j_1,j_2}$ and $C_{1,j_1,\dotsc,j_5}$ are the transforms of the curves $\tilde E_{1,j}$, $\tilde C_{1,j_1,j_2}$ and $\tilde C_{1,j_1,\dotsc,j_5}$ respectively.
    Moreover, we take the set $\bar S_2$ as:
    \begin{itemize}
        \item if $r_2=0$, then $\bar S_2\coloneq\{\bar C_2\}$, $\bar C_2\in\linsys{\bar H_2}$,
        \item if $r_2=1$, then $\bar S_2\coloneq\{\bar E_{2,1},\bar C_2\}$, $\bar C_2\in\linsys{\bar H_2-\bar E_{2,1}}$.
    \end{itemize}
    Then $-K_X.C_2=1$ if $r_2=0$, and $-K_X.e_{2,1}=1$, $-K_X.C_2=0$ if $r_2=1$.
    Thus $-K_X$ is nef if $r_1\le5$ and $r_2\le1$ by \cref{theorem: Nef and NE of blowup of product of del Pezzo}.
    The bigness of $-K_X$ will follow from the proof of \cref{item: characterization of Fano type}.
    Thus $X$ is weak Fano in this case.

    Conversely, assume $r_1\ge6$ or $r_2\ge2$.
    If $r_1\ge6$, take a ($-1$)-curve
    \[
        \tilde C_1\in\biggl\lvert3\tilde H_1-\sum_{j=1}^6\tilde E_{1,j}-2\tilde E_{1,0}\biggr\rvert,
    \]
    and we get
    \[
        -K_X.C_1
        =\biggl(3\tilde H_1-\sum_{j=1}^{r_1}\tilde E_{1,j}-2\tilde E_{1,0}\biggr).\tilde C_1
        =-1.
    \]
    If $r_2\ge2$, take a ($-1$)-curve $\bar C_2\in\linsys{\bar H_2-\bar E_{2,1}-\bar E_{2,2}}$, and then
    \[
        -K_X.C_2=\biggl(\bar H_2-\sum_{j=1}^{r_2}\bar E_{2,j}\biggr).\bar C_2=-1.
    \]
    Hence $-K_X$ is not nef and $X$ is not weak Fano in this case.

    It remains to prove \cref{item: characterization of Fano type}.
    Let $\mu_2\colon\tilde X_2\coloneq\Bl_{b_2}X_2\to X_2$ be the blowup with exceptional divisor $\tilde E_{2,0}$, and let $\tilde A_2$ denote the strict transform of $A_2$ on $\tilde X_2$.
    We first assume that
    \[
        r_1\le6,\ r_2\le7,
        \qquad\text{or}\qquad
        r_1=7,\ r_2\le5.
    \]
    Then we claim the following:

    \begin{claim}\label{claim: auxiliary construction}
        There exist real numbers $t,u,v\ge 0$ and effective $\RB$-divisors $\tilde\Delta_1$ on $\tilde X_1$ and $\tilde G_2$ on $\tilde X_2$ such that the following hold:
        \begin{enumerate}
            \item\label{item: aux inequalities}
                $t+v<1$ and $u-v<1$.
            \item\label{item: aux X1}
                $\tilde E_{1,0}\nsubseteq\Supp\tilde\Delta_1$, $\tilde\Delta_1.\tilde E_{1,0}=u$, the pair $(\tilde X_1,\tilde\Delta_1+(1-t)\tilde E_{1,0})$ is log smooth and klt, and the divisor
                \[
                    -(K_{\tilde X_1}+\tilde\Delta_1+(1-t)\tilde E_{1,0})
                \]
                is ample on $\tilde X_1$.
            \item\label{item: aux X2}
                $\tilde E_{2,0}\cap\Supp\tilde G_2=\emptyset$, the pair $(\tilde X_2,\tilde G_2+t\tilde A_2)$ is klt, $\tilde G_2+\tilde A_2+\tilde E_{2,0}$ has simple normal crossing support, and the divisor
                \begin{align*}
                    \tilde\Gamma_2
                    &\coloneq-(K_{\tilde X_2}+\tilde G_2+t\tilde A_2)-(2-t-u)\tilde H_2+(1-v)\tilde E_{2,0}\\
                    &\equiv-(K_{\tilde X_2}+\tilde G_2)-(2-u)\tilde H_2+(1-v)\tilde E_{2,0}
                \end{align*}
                is ample on $\tilde X_2$.
        \end{enumerate}
    \end{claim}

    \begin{proof}
        First assume that $r_1\le 6$ and $r_2\le 7$.
        Let $0<t\ll 1$, put $u\coloneq 2(1-t)$, and take $0<v<1-t$ sufficiently close to $1-t$.
        Then \cref{item: aux inequalities} holds.

        Since $r_1\le 6$, the linear system
        \[
            \linsys{-K_{\tilde X_1}-\tilde E_{1,0}}
            =\linsys{\mu_1^*(-K_{X_1})-2\tilde E_{1,0}}
        \]
        is nonempty.
        Indeed, imposing a singular point at the general point $a_1$ gives three independent linear conditions on the linear system $\linsys{-K_{X_1}}$ of plane cubics through the $r_1$ blownup points, whose dimension is $9-r_1\ge3$.
        A general member $\tilde C_1$ of this system is the strict transform of a member of $\linsys{-K_{X_1}}$ that has an ordinary double point at $a_1$ and is smooth elsewhere.
        Hence $\tilde C_1$ is a smooth curve meeting $\tilde E_{1,0}$ transversally at the two points corresponding to the branches of the node, and $\tilde C_1+\tilde E_{1,0}$ is simple normal crossing.

        Set $\tilde\Delta_1\coloneq(1-t)\tilde C_1$.
        Then $(\tilde X_1,\tilde\Delta_1+(1-t)\tilde E_{1,0})$ is log smooth and klt, $\tilde E_{1,0}\nsubseteq\Supp\tilde\Delta_1$, and
        \[
            \tilde\Delta_1.\tilde E_{1,0}=2(1-t)=u.
        \]
        Moreover,
        \[
            -(K_{\tilde X_1}+\tilde\Delta_1+(1-t)\tilde E_{1,0})
            \equiv-tK_{\tilde X_1},
        \]
        which is ample.
        This proves \cref{item: aux X1}.

        We set $\tilde G_2\coloneq 0$.
        The curve $\tilde A_2$ is smooth and meets $\tilde E_{2,0}$ transversally at one point; hence $\tilde A_2+\tilde E_{2,0}$ is simple normal crossing, and the pair $(\tilde X_2,t\tilde A_2)$ is klt.
        Moreover,
        \[
            \tilde\Gamma_2
            \equiv-K_{\tilde X_2}-2t\tilde H_2+(1-v)\tilde E_{2,0}.
        \]
        is ample since $\tilde X_2$ is a del Pezzo surface and $t,1-v>0$ are sufficiently small.
        This proves \cref{item: aux X2}.

        Next assume that $r_1=7$ and $r_2\le 5$.
        Let $t>1/2$ be sufficiently close to $1/2$, put $u\coloneq 3(1-t)$, and take $0<v<1-t$ sufficiently close to $1-t$.
        Then
        \[
            u-1=2-3t<v<1-t,
        \]
        hence $t+v<1$ and $u-v<1$, proving \cref{item: aux inequalities}.

        Take the $(-1)$-curve
        \[
            \tilde C_1\in
            \linsys{-2K_{\tilde X_1}-\tilde E_{1,0}}
            =\biggl\lvert
                6\tilde H_1-\sum_{j=1}^7 2\tilde E_{1,j}-3\tilde E_{1,0}
            \biggr\rvert.
        \]
        Since $a_1\in X_1$ is general, the curve $\tilde C_1$ is smooth and meets $\tilde E_{1,0}$ transversally at
        \[
            \tilde C_1.\tilde E_{1,0}
            =(-2K_{\tilde X_1}-\tilde E_{1,0}).\tilde E_{1,0}
            =3
        \]
        distinct points; hence $\tilde C_1+\tilde E_{1,0}$ is simple normal crossing.
        Set $\tilde\Delta_1\coloneq(1-t)\tilde C_1$.
        Then $\tilde E_{1,0}\nsubseteq\Supp\tilde\Delta_1$ and $\tilde\Delta_1.\tilde E_{1,0}=3(1-t)=u$, and $(\tilde X_1,\tilde\Delta_1+(1-t)\tilde E_{1,0})$ is log smooth and klt.
        Moreover,
        \[
            -(K_{\tilde X_1}+\tilde\Delta_1+(1-t)\tilde E_{1,0})
            \equiv-(2t-1)K_{\tilde X_1},
        \]
        which is ample.
        This proves \cref{item: aux X1}.

        If $r_2 \le 4$, we set $\tilde G_2 \coloneq 0$.
        Then $\tilde A_2 + \tilde E_{2,0}$ is simple normal crossing, the pair $(\tilde X_2, t\tilde A_2)$ is klt as before, and
        \[
            \tilde\Gamma_2
            \equiv-K_{\tilde X_2}-(3t-1)\tilde H_2+(1-v)\tilde E_{2,0}.
        \]
        At the limiting values $t = 1/2$ and $v = 1/2$, this divisor equals
        \[
            -K_{\tilde X_2} - \frac12(\tilde H_2 - \tilde E_{2,0}).
        \]
        It has positive intersection with every $(-1)$-curve on $\tilde X_2$, since $\tilde H_2 - \tilde E_{2,0}$ has intersection $0$ or $1$ with each such curve  because $r_2 + 1 \le 5$.
        Hence the divisor is ample, and this remains so after a sufficiently small perturbation of $t$ and $v$.
        This proves \cref{item: aux X2} when $r_2 \le 4$.

        It remains to treat the case $r_2=5$.
        Take the $(-1)$-curve
        \[
            \tilde C_2\in
            \biggl\lvert2\tilde H_2-\sum_{j=1}^5\tilde E_{2,j}\biggr\rvert.
        \]
        Since $b_2$ is a general point of the general line $A_2$, we have $b_2\notin \mu_2(\tilde C_2)$ and thus $\tilde E_{2,0}\cap\tilde C_2=\emptyset$.
        Choose $\alpha>6t-3$ sufficiently close to $6t-3$, and set $\tilde G_2\coloneq \alpha\tilde C_2$.
        Note that $0<\alpha<1$.
        Since $\tilde A_2$ is the strict transform of a general line, it meets $\tilde C_2$ transversally at points away from $\tilde E_{2,0}$, and $\tilde A_2$ meets $\tilde E_{2,0}$ transversally at one point not on $\tilde C_2$.
        Therefore $\tilde C_2+\tilde A_2+\tilde E_{2,0}$ is simple normal crossing, and the pair $(\tilde X_2,\tilde G_2+t\tilde A_2)$ is klt since $\alpha,t<1$.

        Moreover, the divisor
        \[
            \tilde\Gamma_2\equiv-(K_{\tilde X_2}+\tilde G_2)-(3t-1)\tilde H_2+(1-v)\tilde E_{2,0}
        \]
        is ample.
        Indeed, at the limiting values $t=1/2$, $v=1/2$, it becomes
        \[
            -(K_{\tilde X_2}+\alpha\tilde C_2)-\frac12(\tilde H_2-\tilde E_{2,0}),
        \]
        which has positive intersection with every $(-1)$-curve on $\tilde X_2$ other than $\tilde C_2$, since the intersection number of $\tilde H_2-\tilde E_{2,0}$ with each such curve is $0$ or $1$, and $\alpha>0$ is taken sufficiently small.
        With $\tilde C_2$ itself, the intersection number is $\alpha>0$.
        Hence this divisor is ample, and it follows that $\tilde\Gamma_2$ remains ample after a sufficiently small perturbation of $t$ and $v$.
    \end{proof}

    We now return to the proof of \cref{cor: characterization of Fano}.
    Since the divisor $\tilde\Gamma_2$ is ample on $\tilde X_2$ by \cref{claim: auxiliary construction} \cref{item: aux X2}, we may choose a sufficiently small ample $\RB$-divisor $\Theta_2$ on $X_2$ such that $\tilde\Gamma_2-\mu_2^*\Theta_2$ remains ample.
    Since $\tilde G_2+\tilde A_2+\tilde E_{2,0}$ has simple normal crossing support by \cref{claim: auxiliary construction} \cref{item: aux X2}, we may take an effective $\RB$-divisor
    \[
        \tilde\Lambda_2\equiv\tilde\Gamma_2-\mu_2^*\Theta_2
    \]
    with coefficients $<1$, without common component with $\tilde G_2+\tilde A_2+\tilde E_{2,0}$, and such that
    \[
        \tilde\Lambda_2+\tilde G_2+\tilde A_2+\tilde E_{2,0}
    \]
    has simple normal crossing support.
    In particular, the pair
    \[
        (\tilde X_2,\tilde\Delta_2),\qquad\tilde\Delta_2\coloneq\tilde\Lambda_2+\tilde G_2+t\tilde A_2
    \]
    is klt.

    We define $\bar\Delta_2\coloneq(\mu_2)_*\tilde\Delta_2$.
    Since $\tilde E_{2,0}\cap\Supp\tilde G_2=\emptyset$ by \cref{claim: auxiliary construction} \cref{item: aux X2} and $\tilde E_{2,0}\nsubseteq\Supp\tilde\Lambda_2$, we have $(\mu_2)^{-1}_*\bar\Delta_2=\tilde\Delta_2$ and
    \[
        \mult_{b_2}\bar\Delta_2
        =\tilde\Delta_2.\tilde E_{2,0}
        =(\tilde\Gamma_2-\mu_2^*\Theta_2+\tilde G_2+t\tilde A_2).\tilde E_{2,0}
        =t+v,
    \]
    and hence
    \begin{equation}
        \label{eq: pullback of Delta2}
        \mu_2^*\bar\Delta_2=\tilde\Delta_2+(t+v)\tilde E_{2,0}.
    \end{equation}
    Moreover,
    \[
        (1+u)\bar H_2-\sum_{j=1}^{r_2}\bar E_{2,j}-\bar\Delta_2
        \equiv
        \Theta_2,
    \]
    which is ample.
    Note also that $A_2\nsubseteq\Supp\bar\Delta_2$.

    Put $\Delta''\coloneq p_1^*\bar\Delta_1+p_2^*\bar\Delta_2$ and $\Delta\coloneq\varphi^{-1}_*\Delta''$.
    Along $A''=\{a_1\}\times A_2$, the divisor $p_1^*\bar\Delta_1$ has multiplicity $u=\mult_{a_1}\bar\Delta_1$, while $p_2^*\bar\Delta_2$ has multiplicity $t$.
    Moreover, since the second center $B'$ is the strict transform of $B''=X_1\times\{b_2\}$ and $\pi'$ is an isomorphism at the generic point of $B''$, we have
    \[
        \mult_{B'}(\pi')^*p_1^*\bar\Delta_1=0,\qquad
        \mult_{B'}(\pi')^*p_2^*\bar\Delta_2=\mult_{b_2}\bar\Delta_2=t+v.
    \]
    Consequently,
    \[
        \varphi^*\Delta'' = \Delta+(t+u)E+(t+v)F,
    \]
    and thus
    \begin{equation}
        \label{eq: crepant on X}
        K_X+\Delta = \varphi^*(K_{X''}+\Delta'') +(2-t-u)E+(1-t-v)F.
    \end{equation}

    We prove that $-(K_X+\Delta)$ is ample.
    By \cref{lemma: intersection numbers on two blowups},
    \[
        -(K_X+\Delta).f=1-t-v>0,\qquad
        -(K_X+\Delta).e=1-u+v>0,
    \]
    where the inequalities follow from \cref{claim: auxiliary construction} \cref{item: aux inequalities}.

    Next let $C_1\in S_1$.
    By \cref{lemma: intersections on blowup of product of del Pezzos}, we have
    \begin{align*}
        -(K_X+\Delta).C_1
        &=-(\mu_1^*(K_{X_1}+\bar\Delta_1)+(2-t-u)\tilde E_{1,0}).\tilde C_1\\
        &=-(K_{\tilde X_1}+\tilde\Delta_1+(1-t)\tilde E_{1,0}).\tilde C_1.
    \end{align*}
    The divisor $-(K_{\tilde X_1}+\tilde\Delta_1+(1-t)\tilde E_{1,0})$ is ample on $\tilde X_1$ by \cref{claim: auxiliary construction} \cref{item: aux X1}.
    Hence $-(K_X+\Delta).C_1>0$ for every $C_1\in S_1$.

    Similarly, let $C_2\in S_2$.
    By \cref{lemma: intersections on blowup of product of del Pezzos} again, we obtain
    \[
        -(K_X+\Delta).C_2
        =-(K_{X_2}+\bar\Delta_2+(2-t-u)\bar H_2).\bar C_2.
    \]
    By the construction of $\bar\Delta_2$ and $\tilde\Gamma_2$, we have
    \begin{align*}
        &-(K_{X_2}+\bar\Delta_2+(2-t-u)\bar H_2)\\
        &\equiv-(K_{X_2}+(\mu_2)_*(\tilde\Gamma_2-\mu_2^*\Theta_2+\tilde G_2+t\tilde A_2)+(2-t-u)\bar H_2)\\
        &=\Theta_2,
    \end{align*}
    which is ample.
    Therefore $-(K_X+\Delta).C_2>0$ for every $C_2\in S_2$.

    Since $\NEbar(X)$ is generated by $[e]$, $[f]$ and the classes of curves in $S_1$ and $S_2$ by \cref{theorem: Nef and NE of blowup of product of del Pezzo}, the divisor $-(K_X+\Delta)$ is ample.

    It remains to prove that $(X,\Delta)$ is klt.
    Let $\tilde X''\coloneq\tilde X_1\times\tilde X_2$ with projections $\tilde p_i\colon\tilde X''\to\tilde X_i$, and
    \[
        \tilde E''_i\coloneq\tilde p_i^*\tilde E_{i,0},\qquad
        \tilde\Delta''\coloneq\tilde p_1^*\tilde\Delta_1+\tilde p_2^*\tilde\Delta_2.
    \]
    Since $\mu_1^*\bar\Delta_1=\tilde\Delta_1+u\tilde E_{1,0}$ and by \cref{eq: pullback of Delta2}, we have
    \[
        (\mu'')^*\Delta''
        =\tilde\Delta''+u\tilde E''_1+(t+v)\tilde E''_2,
    \]
    where $\mu''\coloneq\mu_1\times\mu_2\colon\tilde X''\to X''$, and thus
    \begin{equation}
        \label{eq: crepant on tilde X}
        K_{\tilde X''}+\tilde\Delta''+(u-1)\tilde E''_1+(t+v-1)\tilde E''_2
        =(\mu'')^*(K_{X''}+\Delta'').
    \end{equation}
    The divisor $\tilde\Delta''+\tilde E''_1+\tilde E''_2$ has simple normal crossing support, since $X''$ is a product and both $\tilde\Delta_1+\tilde E_{1,0}$ and $\tilde\Delta_2+\tilde E_{2,0}$ have simple normal crossing support by \cref{claim: auxiliary construction}.

    Let $\tilde\pi'\colon\tilde X'\to\tilde X''$ be the blowup along $ Z_1\coloneq p_2^*\tilde A_2\cap\tilde E_1=\tilde E_{1,0}\times\tilde A_2$ with exceptional divisor $\tilde E'$, and let $\tilde E'_i\coloneq(\tilde\pi')^{-1}_*\tilde E''_i$.
    Let $\tilde\pi\colon\tilde X\to\tilde X'$ be the blowup along $Z_2\coloneq\tilde E'_1\cap\tilde E'_2$ with exceptional divisor $\tilde G$.
    We set $\tilde\varphi\coloneq\tilde\pi'\circ\tilde\pi\colon\tilde X\to\tilde X''$, and let $\tilde E_i$ and $\tilde E$ be the strict transforms of $\tilde E'_i$ and $\tilde E'$ on $\tilde X$, respectively.
    Since $\tilde\pi'$ and $\tilde\pi$ are permissible blowups with respect to the divisor $\tilde\Delta''+\tilde E''_1+\tilde E''_2$ with simple normal crossing support, the divisor
    \[
        \tilde\varphi^{-1}_*\tilde\Delta''+\tilde E_1+\tilde E_2+\tilde E+\tilde G
    \]
    on $\tilde X$ has simple normal crossing support.

    We now have the following diagram:
    \[
    \begin{tikzcd}[row sep=tiny,column sep=scriptsize]
        &&\tilde X''\arrow[rrdd,"\mu''"]\\
        &\tilde X'\arrow[ru,"\tilde\pi'"]\\
        \tilde X\arrow[ru,"\tilde\pi"]\arrow[rruu,bend left=35,"\tilde\varphi"]&&&& X''\\
        &&& X'\arrow[ru,"\pi'"']\\
        && X\arrow[ru,"\pi"']\arrow[rruu,bend right=35,"\varphi"']
    \end{tikzcd}
    \]
    It follows that $\IC_{A''}\OC_{\tilde X}=\OC_{\tilde X}(-\tilde E-\tilde G)$, and in particular this ideal sheaf is invertible.
    Hence the morphism $\mu''\circ\tilde\varphi\colon\tilde X\to X''$ factors through $X'$.
    Moreover, since $\IC_{B'}\OC_{\tilde X}=\OC_{\tilde X}(-\tilde E_2)$ is invertible, the resulting morphism $\tilde X\to X'$ further factors through $X$.
    Thus we get a morphism $\mu\colon\tilde X\to X$ as in the following diagram:
    \[
    \begin{tikzcd}[row sep=tiny,column sep=scriptsize]
        &&\tilde X''\arrow[rrdd,"\mu''"]\\
        &\tilde X'\arrow[ru,"\tilde\pi'"]\\
        \tilde X\arrow[ru,"\tilde\pi"]\arrow[rruu,bend left=35,"\tilde\varphi"]\arrow[rrdd,"\mu"']&&&& X''\\
        &&& X'\arrow[ru,"\pi'"']\\
        && X\arrow[ru,"\pi"']\arrow[rruu,bend right=35,"\varphi"']
    \end{tikzcd}
    \]
    Furthermore, we have $\mu^*E=\tilde E+\tilde G$ and $\mu^*F=\tilde E_2$.

    Note that
    \[
        \tilde\varphi^*\tilde\Delta''
        =\tilde\varphi^{-1}_*\tilde\Delta''+t\tilde E.
    \]
    Indeed, $\tilde p_2^*\tilde A_2$ is the only component of $\tilde\Delta''$ that contains $Z_1$, and $(\tilde\pi')^*\tilde\Delta''$ does not contain $Z_2$.
    Combining this with \cref{eq: crepant on X,eq: crepant on tilde X}, we compute
    \begin{align*}
        &\mu^*(K_X+\Delta)\\
        &=\mu^*\varphi^*(K_{X''}+\Delta'')+(2-t-u)\mu^*E+(1-t-v)\mu^*F\\
        &=\tilde\varphi^*(K_{\tilde X''}+\tilde\Delta''+(u-1)\tilde E''_1+(t+v-1)\tilde E''_2)\\
            &\quad+(2-t-u)(\tilde E+\tilde G)+(1-t-v)\tilde E_2\\
        &=(K_{\tilde X}-\tilde E-\tilde G)+(\tilde\varphi^{-1}_*\tilde\Delta''+t\tilde E)+(u-1)(\tilde E_1+\tilde E+\tilde G)+(t+v-1)(\tilde E_2+\tilde G)\\
           &\quad+(2-t-u)(\tilde E+\tilde G)+(1-t-v)\tilde E_2\\
        &=K_{\tilde X}+\tilde\varphi^{-1}_*\tilde\Delta''+(u-1)\tilde E_1+(v-1)\tilde G.
    \end{align*}
    Here recall that the variety $\tilde X$ is smooth, the coefficients satisfy $u-1<1$ and $v-1<0$ by \cref{claim: auxiliary construction} \cref{item: aux inequalities}, and $\tilde\varphi^{-1}_*\tilde\Delta''+\tilde E_1$ has simple normal crossing support.
    Therefore $(X,\Delta)$ is klt, and thus $X$ is of Fano type.

    We now prove the converse implication in \cref{item: characterization of Fano type}.
    Assume that $X$ is of Fano type.
    Then there exists an effective $\RB$-divisor $\Delta$ on $X$ such that $(X,\Delta)$ is klt and $-(K_X+\Delta)$ is ample.
    Write
    \[
        t\coloneq\coeff_Y\Delta,\qquad
        c\coloneq\coeff_E\Delta,\qquad
        d\coloneq\coeff_F\Delta,
    \]
    where $Y=\varphi^{-1}_*(X_1\times A_2)$.
    Note that $Y\equiv H_2-E-F$ as divisors on $X$.
    Since $(X,\Delta)$ is klt, we have $0\le t,c,d<1$.
    Put
    \[
        \Delta^\circ\coloneq \Delta-tY-cE-dF .
    \]

    Choose a general point $x_2\in A_2\setminus\{b_2\}$ such that
    \[
        \hat X_1\coloneq \varphi^{-1}_*(X_1\times\{x_2\})
    \]
    is not contained in $\Supp\Delta^\circ$.
    Then $\hat X_1\cong \tilde X_1=\Bl_{a_1}X_1$.
    Let $\bar\Delta_1\coloneq(\mu_1)_*(\Delta^\circ\rvert_{\hat X_1})$ be the push-forward to $X_1$ under this identification.
    We set $u\coloneq\mult_{a_1}\bar\Delta_1$ so that
    \[
        \Delta^\circ\rvert_{\hat X_1}
        \equiv\mu_1^*\bar\Delta_1-u\tilde E_{1,0}
    \]
    on $\hat X_1=\tilde X_1$.
    By \cref{lemma: intersections on blowup of product of del Pezzos}, we have
    \[
        -(K_X+\Delta)\rvert_{\hat X_1}
        \equiv\mu_1^*(-K_{X_1}-\bar\Delta_1)-(2-t-u+c)\tilde E_{1,0},
    \]
    which is ample on $\tilde X_1$.
    Therefore, putting $\tilde\Delta_1\coloneq(\mu_1)^{-1}_*\bar\Delta_1=\mu_1^*\bar\Delta_1-u\tilde E_{1,0}$, the divisor
    \[
        \mu_1^*(-K_{X_1}-\bar\Delta_1)-(2-t-u+c)\tilde E_{1,0}+\tilde\Delta_1
        \equiv3\tilde H_1-\sum_{j=1}^{r_1}\tilde E_{1,j}-(2-t+c)\tilde E_{1,0}
    \]
    is big on $\tilde X_1$.

    Next choose a general point $x_1\in X_1\setminus\{a_1\}$ such that
    \[
        \hat X_2\coloneq \varphi^{-1}_*(\{x_1\}\times X_2)
    \]
    is not contained in $\Supp\Delta^\circ$.
    Then $\hat X_2\cong\tilde X_2=\Bl_{b_2}X_2$.
    Let $\bar\Delta_2\coloneq(\mu_2)_*(\Delta^\circ\rvert_{\hat X_2})$ be the push-forward to $X_2$ under this identification, and set $v\coloneq \mult_{b_2}\bar\Delta_2$.
    Note that $\Delta^\circ\rvert_{\hat X_2}\equiv\mu_2^*\bar\Delta_2-v\tilde E_{2,0}$.

    We now have $\Delta^\circ\equiv D_1+D_2-uE-vF$, and thus
    \[
        \Delta
        \equiv D_1+D_2+tH_2+(c-t-u)E+(d-t-v)F,
    \]
    where $D_i$ denotes the pullback to $X$ of $\bar\Delta_i$.
    Thus by \cref{lemma: intersection numbers on two blowups}, we obtain
    \[
        0<-(K_X+\Delta).f=1-t-v+d,\qquad
        0<-(K_X+\Delta).e=1-u+v+c-d.
    \]
    Therefore
    \[
        t+v-d<1,
        \qquad u-v-c+d<1.
    \]

    For every curve $C_2$ arising from a curve $\bar C_2\in\bar S_2$ as in \cref{setup: product of del Pezzo}, we have
    \[
        0<-(K_X+\Delta).C_2
        =\biggl(
            (1+u-c)\bar H_2-\sum_{j=1}^{r_2}\bar E_{2,j}-\bar\Delta_2
        \biggr).\bar C_2
    \]
    by \cref{lemma: intersections on blowup of product of del Pezzos}.
    Since the classes of curves in $\bar S_2$ generate $\NEbar(X_2)$, the divisor
    \[
        (1+u-c)\bar H_2
        -\sum_{j=1}^{r_2}\bar E_{2,j}
        -\bar\Delta_2
    \]
    is ample on $X_2$.
    Thus putting $\tilde\Delta_2\coloneq(\mu_2)^{-1}_*\bar\Delta_2=\mu_2^*\bar\Delta_2-v\tilde E_{2,0}$, the divisor
    \[
        \mu_2^*\biggl((1+u-c)\bar H_2-\sum_{j=1}^{r_2}\bar E_{2,j}-\bar\Delta_2\biggr)+\tilde\Delta_2
        =(1+u-c)\tilde H_2-\sum_{j=1}^{r_2}\tilde E_{2,j}-v\tilde E_{2,0}
    \]
    is big on $\tilde X_2$.

    In summary, we have two big divisors
    \begin{align*}
        M_1&\coloneq3\tilde H_1-\sum_{j=1}^{r_1}\tilde E_{1,j}-(2-t+c)\tilde E_{1,0},\\
        M_2&\coloneq(1+u-c)\tilde H_2-\sum_{j=1}^{r_2}\tilde E_{2,j}-v\tilde E_{2,0}
    \end{align*}
    on $\tilde X_1$ and $\tilde X_2$, respectively.

    First we exclude the case $r_1=8$.
    Suppose $r_1=8$.
    In this case, since $-K_{\tilde X_1}$ is nef, we have
    \[
        0<-K_{\tilde X_1}.M_1=1-(2-t+c),
    \]
    and thus $t>1+c$.
    This contradicts $t<1$, since $c\ge0$.
    Thus $r_1\le7$.

    Next we prove that $r_2\le7$.
    Suppose $r_2=8$.
    Then $-K_{\tilde X_2}$ is also nef, and therefore
    \[
        0<-K_{\tilde X_2}.M_2
        =3(1+u-c)-8-v
        =3(u-c)-5-v.
    \]
    On the other hand, from $u-v-c+d<1$, we have
    \[
        3(u-c)-5-v
        <3(1+v-d)-5-v
        =2v-3d-2.
    \]
    Since $t+v-d<1$, we also have
    \[
        2v-3d-2
        =2(v-d)-d-2
        <2(1-t)-d-2
        =-2t-d
        \le0.
    \]
    This is a contradiction.
    Hence $r_2\le7$.

    Assume $r_1=7$, and it now suffices to prove $r_2\le5$ in this case.
    The divisor
    \[
        N_1
        \coloneq7\tilde H_1-4\tilde E_{1,0}-3\tilde E_{1,1}-2\sum_{j=2}^{7}\tilde E_{1,j}
    \]
    on $\tilde X_1$ is nef (indeed, we can check non-negativity on ($-1$)-curves).
    Thus
    \[
        0<N_1.M_1
        =6-4(2-t+c),
    \]
    and hence $t-c>1/2$.
    Combining this with $t+v-d<1$, we obtain
    \[
        v<1-t+d<\frac12+d-c.
    \]

    Suppose now that $r_2=6$.
    The divisor
    \[
        N_2\coloneq5\tilde H_2-\tilde E_{2,0}-2\sum_{j=1}^{6}\tilde E_{2,j}
    \]
    is nef on $\tilde X_2$.
    Therefore
    \[
        0<N_2.M_2
        =5(1+u-c)-12-v,
    \]
    and hence $u-c>(7+v)/5$.
    Combining this with $u-v-c+d<1$, we have
    \[
        \frac{7+v}5<u-c<1+v-d,
    \]
    which gives
    \[
        \frac12+\frac54d<v<\frac12+d-c,
    \]
    a contradiction.
    Thus $r_2\ne6$.

    Finally, suppose that $r_2=7$.
    The divisor
    \[
        L_2\coloneq8\tilde H_2-\tilde E_{2,0}-3\sum_{j=1}^{7}\tilde E_{2,j}
    \]
    on $\tilde X_2$ is nef.
    Hence
    \[
        0<L_2.M_2
        =8(1+u-c)-21-v,
    \]
    and so $u-c>(13+v)/8$.
    Since again $u-v-c+d<1$, we get
    \[
        \frac{13+v}{8}<u-c<1+v-d,
    \]
    hence
    \[
        \frac57+\frac87d<v<\frac12+d-c.
    \]
    This is a contradiction.
    Therefore $r_2\ne7$.
    This completes the proof of \cref{item: characterization of Fano type}.
\end{proof}
\bibliographystyle{amsalpha}
\bibliography{bibtex_tyoshida}
\end{document}